\tikzset{>=stealth}
\def\gr{1.6180339} %
\theoremstyle{plain}
\newtheorem{thm}{Theorem}[section]
\newtheorem{lem}[thm]{Lemma}
\newtheorem{prop}[thm]{Proposition}
\newtheorem{cor}[thm]{Corollary}
\newtheorem{introthm}{Theorem}
\theoremstyle{definition}
\newtheorem{defn}[thm]{Definition}
\theoremstyle{remark}
\newtheorem{rem}[thm]{Remark}
\newtheorem{ex}[thm]{Example}
\crefname{equation}{}{}
\newcommand{\todo}[1]{\textbf{\color{blue} #1}}
\def\textclap#1{\hbox to 0pt{\hss#1\hss}}
\newcommand{\cE}{\mathcal{E}}
\newcommand{\cH}{\mathcal{H}}
\newcommand{\cL}{\mathcal{L}}
\newcommand{\cM}{\mathcal{M}}
\newcommand{\cR}{\mathcal{R}}
\newcommand{\cS}{\mathcal{S}}
\newcommand{\cX}{\mathcal{X}}
\newcommand{\cY}{\mathcal{Y}}
\newcommand{\bB}{\mathbf{B}}
\newcommand{\bbA}{\mathbb{A}}
\newcommand{\bbN}{\mathbb{N}}
\newcommand{\bbQ}{\mathbb{Q}}
\newcommand{\bbZ}{\mathbb{Z}}
\newcommand{\C}{\mathbb{C}}
\newcommand{\N}{\mathbb{N}}
\newcommand{\id}{\mathrm{id}}
\newcommand{\Mat}{\mathsf{Mat}}
\newcommand{\GL}{\mathsf{GL}}
\newcommand{\Gr}{\mathsf{Gr}}
\newcommand{\eps}{\upepsilon}
\newcommand{\del}{\updelta}
\newcommand{\rH}{\mathrm{H}}
\newcommand{\BM}{\mathrm{BM}}
\newcommand{\pt}{\mathrm{pt}}
\DeclareMathOperator{\rk}{\mathsf{rk}}
\let\ker\relax
\DeclareMathOperator{\ker}{\mathsf{ker}}
\let\im\relax
\DeclareMathOperator{\im}{\mathsf{im}}
\DeclareMathOperator{\gra}{\mathsf{gr}}
\newcommand{\UU}{\mathsf{U}}
\newcommand{\LL}{\mathsf{L}}
\newcommand{\LU}{\mathsf{LU}}
\newcommand{\TT}{\mathsf{T}}
\newcommand{\TU}{\mathsf{TU}}
\DeclareFontFamily{U}{mathb}{}
\DeclareFontShape{U}{mathb}{m}{n}{
  <-5.5> mathb5
  <5.5-6.5> mathb6
  <6.5-7.5> mathb7
  <7.5-8.5> mathb8
  <8.5-9.5> mathb9
  <9.5-11.5> mathb10
  <11.5-> mathbb12
}{}
\DeclareRobustCommand{\bsquare}{%
        \text{\usefont{U}{mathb}{m}{n}\symbol{"0D}}%
}
\renewcommand{\d}{\mathsf{d}}
\newcommand{\stab}{\mathsf{stab}}
\newcommand{\forg}{\mathsf{forg}}
\title[Categorifying Quiver Linking/Unlinking using CoHA Modules]{Categorifying Quiver Linking/Unlinking using CoHA Modules}
\author[O. van Garderen]{Okke van Garderen}
\begin{document}

\maketitle
\begin{abstract}
  The knots-quivers correspondence is a relation between knot invariants and enumerative invariants of quivers,
  which in particular translates the knot operations of \emph{linking} and \emph{unlinking} to a certain mutation operation on quivers.
  In this paper we show that the moduli spaces of a quiver and its linking/unlinking are naturally related, giving a purely representation-theoretic interpretation of these operations.
  We obtain a relation between the cohomologies of these spaces which is moreover compatible with a natural action of the Cohomological Hall Algebra.
  The result is a categorification of quiver linking/unlinking at the level of CoHA modules.
\end{abstract}

\tableofcontents

\section{Introduction}

Generating series are a well-known tool for comparing mathematical structures, and can serve as a first approximation for deeper relations between different subjects.
One important example is the \emph{knots-quivers correspondence} of \cite{KRSS19}, which conjectures a relation between the generating series of knots, which encode their HOMFLY-PT polynomials, and the generating series of corresponding quivers, given by the $q$-refined DT generating function
\begin{equation}\label{eq:DTgen}
  \bbA_Q(x,q) = \sum_{\del\in\N Q_0} \sum_{n\in\bbZ} \frac{(-q^{1/2})^{\chi_Q(\del,\del)}}{\prod_{i\in Q_0} (1-q)(1-q^2)\cdots(1-q^{\del_i})} \cdot x^\del,
\end{equation}
counting moduli of representations of a quiver $Q$.
One piece of evidence that the knots-quivers correspondence is part of a deeper relation is provided in \cite{EKL20}.
Motivated by physics, the authors construct an analogue of the knot operations of \emph{linking} and \emph{unlinking} for quivers:
\[
  Q \leadsto Q^\LL,\quad\quad Q\leadsto Q^\UU,
\]
and show that the generating series $\bbA_Q(x,q)$, $\bbA_{Q^\LL}(x,q)$, and $\bbA_{Q^\UU}(x,q)$ are related in the same way as the generating series for the corresponding knots.

Although the appearance of the quivers $Q^\LL$ and $Q^\UU$ in \cite{EKL20} is well-motivated from a physical standpoint, their meaning is somewhat mysterious when viewed purely from the quiver side:
the construction involves adding and removing several arrows and does not seem to correspond to any known mutation of quivers in the literature.
This raises the question: is there a way to express the linking/unlinking operations using more standard representation theoretic tools?

Secondly there is the issue of \emph{categorification}: replacing enumerative invariants by more refined structures.
It is well-known that \Cref{eq:DTgen} is the Poincaré series of the \emph{Cohomological Hall Algebra} (CoHA)
\[
  \cH_Q \colonequals \bigoplus_{\del\in\N Q_0} \rH^\bullet(\cM_\del(Q),\bbQ)[-\chi_Q(\del,\del)],
\]
where $\cM_\del(Q)$ is the moduli stack of representations of dimension $\del$ and $\chi_Q$ is the Euler pairing.
Another question is then if linking/unlinking yields a relation between $\cH_Q$, $\cH_{Q^\LL}$, and $\cH_{Q^\UU}$.

The goal of this paper is to answer the above two questions.
We find a geometric relation between the moduli spaces of $Q$ and $Q^\LL$/$Q^\UU$ using standard representation-theoretic tools.
In both cases we show that this leads to a module action of $\cH_Q$ on the CoHAs $\cH_{Q^\LL}$ and $\cH_{Q^\UU}$, which we interpret as a categorification of the relations between the generating series.

\subsection{Unlinking}

Given any finite quiver $Q$ with a two-cycle consisting of two arrows $c\colon 0\to 1$ and $d \colon 1\to 0$ between distinguished vertices $0,1$, we can consider the \emph{unlinked} quiver $Q^\UU$.
This is obtained by following the recipe of \cite{EKL20} and consists of by removing $c$ and $d$, adding a new vertex $\star \in Q_1^\UU$ and a number of arrows starting/ending in $\star$.
A simple example is illustrated below:
\[
  \begin{tikzpicture}
    \begin{scope}[yshift=3.5cm]
      \node[outer sep=1pt,inner sep=1.5pt] (a) at (0,.5) {$\scriptstyle 0$};
      \node[outer sep=1pt,inner sep=1.5pt] (b) at (0,-.5) {$\scriptstyle 1$};
      \node (d) at (-1.5,0) {$Q\colon$};
      \draw[->] (a) to[bend left=20] (b);
      \draw[->] (b) to[bend left=20] (a);
      \draw[->] (a) to[out=125,in=55,min distance=5mm] (a);
      \node at (2.5,0) {$\overset{\text{unlinking}}\leadsto$};
    \end{scope}      
    \begin{scope}[yshift=3.5cm,xshift=7cm]          
      \node[outer sep=1pt,inner sep=1.5pt] (a) at (0,.5) {$\scriptstyle 0$};
      \node[outer sep=1pt,inner sep=1.5pt] (b) at (0,-.5) {$\scriptstyle 1$};
      \node[outer sep=1pt,inner sep=1.5pt] (e) at (1,0) {$\scriptstyle \star$};
      \node (d) at (-1.6,0) {$Q^\UU\colon$};
      \draw[->] (a) to[bend right=20] (e);
      \draw[->] (e) to[bend right=20] (a);
      \draw[->] (a) to[out=125,in=55,min distance=5mm] (a);
      \draw[->] (e) to[out=125,in=55,min distance=5mm] (e);
      \draw[->] (e) to[out=5,in=-65,min distance=5mm] (e);
    \end{scope}
  \end{tikzpicture}
\]
For each dimension vector $\eps\in \N Q_0^\UU$ there is a moduli space $\cM_\eps(Q^\UU)$ of $Q^\UU$-representations modulo the action of a symmetry group $\GL_\eps$, whose cohomologies encode the generating function of $Q^\UU$.

Instead of constructing $Q^\UU$ one can consider a \emph{rank stratification} of the moduli spaces $\cM_\del(Q)$ with locally closed strata
depending on the rank of the action  $\rho_d \colon \C^{\del_1} \to \C^{\del_0}$ of $d$:
\[
  \cS_{\del,\ell} = \big\{\ \rho \in \cM_\del(Q) \ \mid\ \rk \rho_d = \ell\ \big\}.
\]
For each such representation $\rho_d$ identifies an $\ell$-dimensional subspace in $\C^{\del_1}$ with one in $\C^{\del_0}$,
and the stratum $\cS_{\del,\ell}$ can therefore be represented as the set of representations with 
\begin{equation}\label{eq:dblock}
  \rho_d = \left(\begin{array}{c|c}
    0 & 0\\\hline
    0\vphantom{\sum^{I}} & I_{\ell\times\ell}
  \end{array}\right),
\end{equation}
modulo the action of a subgroup $P_{\del,\ell}$ of the full symmetry group $\GL_\del$ of the quiver.
In \Cref{sec:unlink} we show that for each pair $(\del,\ell)$ the space of block matrices is homotopic to the space of representations of $Q^\UU$ for some unique dimension vector $\eps$, and $\GL_\eps$ appears naturally as the Levi subgroup of $P_{\del,\ell}$.

\begin{introthm}[{\Cref{prop:unlinkstrat},~\Cref{thm:gradeddecomp}}]\label{introthm:A}
  Let $Q$ be any quiver admitting an unlinking $Q^\UU$.
  Then for each $\eps\in \N Q_0^\UU$ there is a dimension vector $\del = u(\eps)$ with a homotopy equivalence
  \begin{equation}\label{eq:unltostrat}
    \cM_\eps(Q^\UU) \to \cS_{(u(\eps),\eps_\star)},
  \end{equation}
  and for each $\del \in \N Q_0$ this yields a decomposition in cohomology
  \begin{equation}\label{eq:decompmentioned}
    \rH^\bullet(\cM_\del(Q),\bbQ)[-\chi_Q(\del,\del)] \cong \bigoplus_{u(\eps) = \del} \rH^\bullet(\cM_\eps(Q^\UU),\bbQ)[-\chi_{Q^\LL}(\eps,\eps)].
  \end{equation}
\end{introthm}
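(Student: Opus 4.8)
The plan is to exhibit each rank stratum $\cS_{(\del,\ell)}$ as the quotient of an affine space by a parabolic subgroup whose Levi factor is a symmetry group $\GL_\eps$ of $Q^\UU$, and then to feed the resulting homotopy equivalences into the long exact sequences of the rank stratification. Throughout I write $\cM_\del(Q)=[\mathrm{Rep}_\del(Q)/\GL_\del]$ with $\mathrm{Rep}_\del(Q)=\prod_{a\colon i\to j}\mathrm{Hom}(\C^{\del_i},\C^{\del_j})$, so that $\dim\cM_\del(Q)=-\chi_Q(\del,\del)$.

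\emph{The homotopy equivalence \eqref{eq:unltostrat}.} Fix $\eps\in\N Q_0^\UU$, set $\ell\colonequals\eps_\star$, and let $\del=u(\eps)$ be given by $\del_0=\eps_0+\ell$, $\del_1=\eps_1+\ell$, and $\del_i=\eps_i$ otherwise. In the $\rho_d$-direction the locus $\{\rk\rho_d=\ell\}\subseteq\mathrm{Rep}_\del(Q)$ is a single $\GL_{\del_0}\times\GL_{\del_1}$-orbit, namely that of the block matrix \eqref{eq:dblock}, with stabiliser $P_{\del,\ell}\le\GL_\del$; using the identity $[(H/H_y)\times Z/H]\cong[Z/H_y]$ for a transitive action one obtains $\cS_{(\del,\ell)}\cong[F_{\del,\ell}/P_{\del,\ell}]$, where $F_{\del,\ell}$ is the affine space of the remaining arrow data, with the spaces at $0$ and $1$ split according to the decompositions $\C^{\del_0}=\C^{\eps_0}\oplus\C^\ell$ and $\C^{\del_1}=\C^{\eps_1}\oplus\C^\ell$ cut out by \eqref{eq:dblock}. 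A direct computation identifies $P_{\del,\ell}$ with a parabolic-type subgroup $\GL_\eps\ltimes U_{\del,\ell}$ whose Levi factor $\GL_\eps=\GL_{\eps_0}\times\GL_{\eps_1}\times\GL_{\eps_\star}\times\prod_{i\ne0,1}\GL_{\del_i}$ is exactly the symmetry group of $Q^\UU$ at $\eps$, and whose unipotent radical $U_{\del,\ell}$ is contractible. The essential point is then that, as a $\GL_\eps$-representation, $F_{\del,\ell}$ contains $\mathrm{Rep}_\eps(Q^\UU)$ as a $\GL_\eps$-stable linear subspace: one verifies block by block that deleting $c$ and $d$ and redistributing the remaining arrows of $Q$ among $\{0,1,\star\}$ --- arrows adjacent to $0$ or $1$ splitting between the old vertex and $\star$, the self-loops created at $\star$ absorbing the leftover blocks --- reproduces both the arrow count of $Q^\UU$ prescribed by \cite{EKL20} and the correct action. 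Composing the inclusion $\mathrm{Rep}_\eps(Q^\UU)\hookrightarrow F_{\del,\ell}$ with the Levi inclusion $\GL_\eps\hookrightarrow P_{\del,\ell}$ then induces the map \eqref{eq:unltostrat}. It is a homotopy equivalence because $[\mathrm{Rep}_\eps(Q^\UU)/\GL_\eps]$ and $[F_{\del,\ell}/P_{\del,\ell}]$ equivariantly deformation retract (by rescaling the affine space to the origin) onto $[\pt/\GL_\eps]$ and $[\pt/P_{\del,\ell}]$ respectively, $[\pt/\GL_\eps]\to[\pt/P_{\del,\ell}]$ is an equivalence since $U_{\del,\ell}$ is contractible, and the two retractions are compatible as the subspace inclusion is linear.

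\emph{The cohomological decomposition \eqref{eq:decompmentioned}.} I would use the filtration of $\cM_\del(Q)$ by the closed substacks $\cZ_\ell=\{\rk\rho_d\le\ell\}$, with $\cS_{(\del,\ell)}=\cZ_\ell\setminus\cZ_{\ell-1}$, and the associated long exact sequences in Borel--Moore homology
\[
  \cdots\to\rH^\BM_k(\cZ_{\ell-1})\to\rH^\BM_k(\cZ_\ell)\to\rH^\BM_k(\cS_{(\del,\ell)})\to\rH^\BM_{k-1}(\cZ_{\ell-1})\to\cdots .
\]
By \eqref{eq:unltostrat} each $\cS_{(\del,\ell)}$ has the rational cohomology of $B\GL_\eps$, concentrated in even degrees; as $\cS_{(\del,\ell)}$ is a smooth stack, $\rH^\BM_k(\cS_{(\del,\ell)})$ is supported in even $k$, and inductively so is $\rH^\BM_k(\cZ_{\ell-1})$. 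Hence the connecting maps vanish for parity reasons, the sequences split over $\bbQ$, and $\rH^\BM_\bullet(\cM_\del(Q))\cong\bigoplus_{u(\eps)=\del}\rH^\BM_\bullet(\cM_\eps(Q^\UU))$. Dualising by Poincaré duality for smooth quotient stacks, $\rH^\BM_k([X/G])\cong\rH^{2\dim[X/G]-k}([X/G])$, converts this into an isomorphism of cohomology groups in which each summand is shifted by twice the relevant dimension; a short computation using $\chi_Q(\del,\del)=-\dim\cM_\del(Q)$, the analogous identity for $Q^\UU$, and the combinatorial identity $\chi_{Q^\LL}(\eps,\eps)=\chi_{Q^\UU}(\eps,\eps)$ for $u(\eps)=\del$ shows these shifts are exactly $[-\chi_Q(\del,\del)]$ on the left and $[-\chi_{Q^\LL}(\eps,\eps)]$ on each summand, giving \eqref{eq:decompmentioned}. (As a consistency check, since $\mathrm{Rep}_\del(Q)$ is $\GL_\del$-equivariantly contractible this decomposition of graded vector spaces is equivalent to a generating-series identity already implicit in \cite{EKL20}; the content here is that it is realised geometrically by the rank stratification.)

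The main obstacle is the block-by-block matching: proving that for an arbitrary quiver $Q$ with a chosen two-cycle the $\GL_\eps$-space $F_{\del,\ell}$ contains $\mathrm{Rep}_\eps(Q^\UU)$ on the nose. The orbit--stabiliser reduction, the contractibility arguments, and the purity/parity degeneration are all routine, but this matching requires a careful case analysis over the arrow types of $Q$ --- the surviving arrow $c$ of the two-cycle, any further arrows between $0$ and $1$, loops at $0$ or $1$, and arrows to and from the other vertices --- checked against the recipe of \cite{EKL20} for which arrows $Q^\UU$ carries and with which multiplicities; it is this matching that pins down $u(\eps)$ and the number of self-loops at $\star$. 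A secondary, purely bookkeeping obstacle is reconciling the Poincaré-duality shifts with the Euler-form shifts appearing in \eqref{eq:decompmentioned}.
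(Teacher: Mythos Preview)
Your approach is essentially the paper's: orbit--stabiliser to present each stratum as $[F_{\del,\ell}/P_{\del,\ell}]$, identification of $\GL_\eps$ as the Levi of $P_{\del,\ell}$, an explicit block-by-block matching embedding $R_\eps(Q^\UU)$ into $F_{\del,\ell}$, and parity degeneration of the Borel--Moore long exact sequences for the rank filtration.

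Two small slips are worth flagging. First, a homotopy equivalence $\cM_\eps(Q^\UU)\simeq\cS_{(\del,\ell)}$ identifies \emph{cohomology}, not Borel--Moore homology, since the two stacks have different dimensions; so your intermediate claim $\rH^\BM_\bullet(\cM_\del(Q))\cong\bigoplus_{u(\eps)=\del}\rH^\BM_\bullet(\cM_\eps(Q^\UU))$ with no shift is false as written. The paper first decomposes into $\bigoplus_\ell\rH^\BM_\bullet(\cS_{(\del,\ell)})$, dualises each stratum, applies the homotopy equivalence on cohomology, and dualises back --- the codimension of the stratum is what produces the correct Euler-form shift. Second, the $\chi_{Q^\LL}$ in the displayed statement is a typo for $\chi_{Q^\UU}$ (compare the body of the paper); there is no identity $\chi_{Q^\LL}(\eps,\eps)=\chi_{Q^\UU}(\eps,\eps)$ to prove, and indeed $\eps$ is not a dimension vector for $Q^\LL$. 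The actual combinatorial input is the paper's lemma $\chi_{Q^\UU}(\eps,\eps)-\chi_Q(u(\eps),u(\eps))=2\eps_0\eps_1$, which matches the codimension of the stratum.
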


Since the Poincaré series of the shifted cohomologies are the coefficients of the generating series of $Q$ and $Q^\UU$ this theorem recovers the relation $\bbA_{Q^\UU}(x,q)|_{x_\star = x_0x_1} = \bbA_Q(x,q)$ found in \cite{EKL20}.

\Cref{introthm:A} categorifies the relation between $\bbA_{Q^\UU}(x,q)$ and $\bbA_Q(x,q)$ at the level of vector spaces.
We further show that the decomposition can be made compatible with the module action of the CoHA $\cH_Q$, which yields the following categorification at the level of CoHA modules.

\begin{introthm}[{\Cref{thm:filtbyideals}}]\label{introthm:B}
  The CoHA $\cH_Q$ has an descending filtration by right ideals
  \[
    \cH_Q = \cR_0 \supset \cR_1 \supset \ldots,
  \]
  such that the decomposition \Cref{eq:decompmentioned} induces an isomorphism of $\N Q_0\times \bbZ$-graded vector spaces
  \[
    \bigoplus_{p\in\N} \cR_p/\cR_{p+1} \cong \cH_{Q^\UU}.
  \]
\end{introthm}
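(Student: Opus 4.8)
The plan is to obtain the filtration directly from the rank stratification and verify its two defining properties separately. For $\del\in\N Q_0$ and $p\in\N$, write $\cM^{\le p-1}_\del(Q)=\{\rho\in\cM_\del(Q)\mid\rk\rho_d\le p-1\}$ for the closed substack cut out by the vanishing of the $p\times p$ minors of $\rho_d$; its open complement is the union $\bigsqcup_{\ell\ge p}\cS_{\del,\ell}$ of the higher strata. Set
\[
  \cR^\del_p\colonequals\ker\!\Big(\rH^\bullet\big(\cM_\del(Q),\bbQ\big)[-\chi_Q(\del,\del)]\longrightarrow\rH^\bullet\big(\cM^{\le p-1}_\del(Q),\bbQ\big)[-\chi_Q(\del,\del)]\Big),\qquad\cR_p\colonequals\bigoplus_{\del\in\N Q_0}\cR^\del_p .
\]
Since $\cM^{\le-1}_\del(Q)=\emptyset$ and $\cM^{\le p-1}_\del(Q)\subseteq\cM^{\le p}_\del(Q)$, this is a descending filtration $\cH_Q=\cR_0\supseteq\cR_1\supseteq\cdots$ of $\N Q_0\times\bbZ$-graded subspaces.

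\emph{The associated graded.} The decompositions produced in the proof of \Cref{introthm:A} (i.e.\ \Cref{thm:gradeddecomp}) refine the rank stratification filtration, so the restriction map to $\cM^{\le p-1}_\del(Q)$ above is, up to the identifications of \Cref{introthm:A}, the projection onto the summands of \eqref{eq:decompmentioned} indexed by the $\eps$ with $u(\eps)=\del$ and $\eps_\star\le p-1$. Hence $\cR^\del_p$ is the sum of the summands with $\eps_\star\ge p$, and
\[
  \cR^\del_p/\cR^\del_{p+1}\;\cong\;\bigoplus_{\substack{u(\eps)=\del\\\eps_\star=p}}\rH^\bullet\big(\cM_\eps(Q^\UU),\bbQ\big)[-\chi_{Q^\UU}(\eps,\eps)] .
\]
Summing over all $\del$ and $p$, and using that $\eps\mapsto(u(\eps),\eps_\star)$ is a bijection from $\N Q_0^\UU$ onto the set of admissible pairs (dimension vector, rank), the right-hand sides assemble to $\bigoplus_{\eps\in\N Q_0^\UU}\rH^\bullet(\cM_\eps(Q^\UU),\bbQ)[-\chi_{Q^\UU}(\eps,\eps)]=\cH_{Q^\UU}$, which is the required isomorphism of $\N Q_0\times\bbZ$-graded vector spaces.

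\emph{The right-ideal property.} This is the step I expect to be the main obstacle. The CoHA product $\cH_{\del'}\otimes\cH_{\del''}\to\cH_{\del'+\del''}$ is a (suitably shifted) pull–push $q_*r^*$ through the stack $\cM_{\del',\del''}$ of short exact sequences $0\to\rho'\to\rho\to\rho''\to0$, where $r$ is the smooth map to $\cM_{\del'}(Q)\times\cM_{\del''}(Q)$ sending such a sequence to $(\rho',\rho'')$ and $q$ is the proper map to $\cM_{\del'+\del''}(Q)$ sending it to $\rho$. The key point is that this correspondence respects the rank stratification: writing $\rho$ in block--triangular form along the subrepresentation $\rho'$ gives $\rho_d=\left(\begin{smallmatrix}\rho'_d&*\\0&\rho''_d\end{smallmatrix}\right)$, so $\rk\rho_d\ge\rk\rho'_d$, whence
\[
  q^{-1}\!\big(\cM^{\le p-1}_{\del'+\del''}(Q)\big)\;\subseteq\;r^{-1}\!\big(\cM^{\le p-1}_{\del'}(Q)\times\cM_{\del''}(Q)\big).
\]
Now take $\alpha\in\cR^{\del'}_p$ and $\beta\in\cH_{\del''}$. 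Since $\alpha$ restricts to zero on $\cM^{\le p-1}_{\del'}(Q)$, the class $r^*(\alpha\boxtimes\beta)$ restricts to zero on $r^{-1}\big(\cM^{\le p-1}_{\del'}(Q)\times\cM_{\del''}(Q)\big)$, hence on $q^{-1}(\cM^{\le p-1}_{\del'+\del''}(Q))$; by proper base change along $q$ this forces $\alpha\cdot\beta=q_*r^*(\alpha\boxtimes\beta)$ to restrict to zero on $\cM^{\le p-1}_{\del'+\del''}(Q)$, i.e.\ $\alpha\cdot\beta\in\cR^{\del'+\del''}_p$. Therefore $\cR_p\cdot\cH_Q\subseteq\cR_p$. (The symmetric bound $\rk\rho_d\ge\rk\rho''_d$ in fact shows $\cR_p$ is two-sided, but only the right-ideal property is needed.) The hard part is making this last step rigorous: one must set up the rank-stratified Hall correspondence and control the interaction of the non-proper leg $r$ with the proper leg $q$ in whichever cohomology theory underlies $\cH_Q$, keep the shifts $\chi_Q$ and $\chi_{Q^\UU}$ aligned through $u$, and extract from the proof of \Cref{introthm:A} the precise statement that \eqref{eq:decompmentioned} refines the rank filtration defining $\cR_\bullet$. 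Granting these bookkeeping points, the three steps above combine to prove the theorem.
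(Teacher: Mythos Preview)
Your filtration is defined incorrectly, and the error makes it collapse. You take $\cR^\del_p$ to be the kernel of restriction in \emph{singular cohomology} to the \emph{closed} determinantal locus $\cM^{\le p-1}_\del(Q)=\{\rk\rho_d\le p-1\}$. But this locus always contains the linear subspace $\{\rho_d=0\}$, and restriction to that subspace is already an isomorphism in equivariant cohomology (both source and target are $\rH^\bullet_{\GL_\del}(\pt)$). Hence the restriction map to $\cM^{\le p-1}_\del(Q)$ is injective for every $p\ge 1$, and your $\cR_p$ vanishes identically in positive filtration degree. The assertion that ``the restriction map to $\cM^{\le p-1}_\del(Q)$ is the projection onto the summands with $\eps_\star\le p-1$'' is therefore false: you are implicitly identifying $\rH^\bullet$ and $\rH^\BM_\bullet$ of the \emph{singular} closure $\overline\cS_{\del,p-1}$, but these are different, and the decomposition in \Cref{introthm:A} is a Borel--Moore statement.

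The paper's filtration is set up dually: $\cR_{p,\del}$ is the \emph{image} of the proper pushforward $\rH^\BM_\bullet(\overline\cS_{\del,\del_1-p})\to\rH^\BM_\bullet(\cM_\del(Q))$, which via Poincar\'e duality on the smooth ambient stack is the kernel of restriction in cohomology to the \emph{open} complement $\{\rk\rho_d>\del_1-p\}=\{\dim\ker\rho_d<p\}$. Note the index $p$ measures $\dim\ker\rho_d$, not $\rk\rho_d$; correspondingly the $p$th graded piece collects the summands with $\eps_1=p$, not with $\eps_\star=p$. The right-ideal property is then proved from the inclusion $\ker\rho'_d\subset\ker\rho_d$ for $\rho'\subset\rho$, i.e.\ monotonicity of kernel dimension under extensions on the right, using the Borel--Moore pushforward formalism. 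Your proper-base-change argument for the ideal property is in the right spirit and survives once the filtration is redefined in Borel--Moore homology (or equivalently via restriction to open strata), but the rank inequality you wrote down is not the relevant one: it is $\dim\ker$, not $\rk$, that is compatible with a \emph{descending} filtration by images of closed pushforwards.
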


\subsection{Linking}

Given again a finite quiver $Q$ and a choice of distinct nodes $0,1\in Q_0$ there is a \emph{linked} quiver $Q^\LL$, which consists of a new twocycle between $0$ and $1$, a new vertex denoted $\bsquare\in Q^\LL_0$, as well as additional arrows starting/ending in $\bsquare$.
A simple example is given below:
\[
  \begin{tikzpicture}
    \begin{scope}[yshift=3.5cm]
      \node[outer sep=1pt,inner sep=1.5pt] (a) at (0,.5) {$\scriptstyle 0$};
      \node[outer sep=1pt,inner sep=1.5pt] (b) at (0,-.5) {$\scriptstyle 1$};
      \node (d) at (-1.5,0) {$Q\colon$};
      \draw[->] (a) to[out=125,in=55,min distance=5mm] (a);
      \node at (2.5,0) {$\overset{\text{linking}}\leadsto$};
    \end{scope}      
    \begin{scope}[yshift=3.5cm,xshift=7cm]          
      \node[outer sep=1pt,inner sep=1.5pt] (a) at (0,.5) {$\scriptstyle 0$};
      \node[outer sep=1pt,inner sep=1.5pt] (b) at (0,-.5) {$\scriptstyle 1$};
      \node[outer sep=1pt,inner sep=1.5pt] (e) at (1,0) {$\scriptstyle \bsquare$};
      \node (d) at (-1.6,0) {$Q^\LL\colon$};
      \draw[->] (a) to[bend right=20] (b);
      \draw[->] (b) to[bend right=20] (a);      
      \draw[->] (a) to[bend right=20] (e);
      \draw[->] (e) to[bend right=20] (a);
      \draw[->] (a) to[out=125,in=55,min distance=5mm] (a);
      \draw[->] (e) to[out=125,in=55,min distance=5mm] (e);
    \end{scope}
  \end{tikzpicture}
\]
Again, we wish to interpret the moduli space $\cM_\gamma(Q^\LL)$ for dimension vectors $\gamma \in \N Q_0^\LL$ in relation to those of $Q$.
In the linking setup this requires another quiver: the twocycle quiver $Q^{\TT}$ obtained by simply adding a twocycle; in the example above this yields:
\[
  \begin{tikzpicture}
    \begin{scope}[yshift=3.5cm]
      \node[outer sep=1pt,inner sep=1.5pt] (a) at (0,.5) {$\scriptstyle 0$};
      \node[outer sep=1pt,inner sep=1.5pt] (b) at (0,-.5) {$\scriptstyle 1$};
      \node (d) at (-1.5,0) {$Q\colon$};
      \draw[->] (a) to[out=125,in=55,min distance=5mm] (a);
      \node at (2.5,0) {$\leadsto$};
    \end{scope}      
    \begin{scope}[yshift=3.5cm,xshift=7cm]          
      \node[outer sep=1pt,inner sep=1.5pt] (a) at (0,.5) {$\scriptstyle 0$};
      \node[outer sep=1pt,inner sep=1.5pt] (b) at (0,-.5) {$\scriptstyle 1$};
      \node (d) at (-1.6,0) {$Q^\TT\colon$};
      \draw[->] (a) to[bend right=20] (b);
      \draw[->] (b) to[bend right=20] (a);      
      \draw[->] (a) to[out=125,in=55,min distance=5mm] (a);
    \end{scope}
  \end{tikzpicture}
\]
We consider moduli spaces $\cX_\del^{(k)}$ of \emph{stably framed representations}: pairs $(\rho,f)$ of a $Q^\TT$-representation with a framing data $f\colon \C^{\del_0} \to \C^k$ satisfying a certain stability condition with respect to the added arrows.
We show that for every $\gamma\in \N Q_0^\LL$ there is a homotopy equivalences to the quotient
\begin{equation}\label{eq:linkheq}
  \cM_\gamma(Q^\LL) \longrightarrow \overline\cX_{\del+\gamma_\bsquare(e_0+e_1)}^{(\gamma_\bsquare)} \colonequals \cX_\del^{(k)}/\GL_k,
\end{equation}
where $\GL_k$ acts from the left on the framing data.
These maps are again constructed by considering a certain block form, which is acted upon by a subgroup of $\GL_\del$ with Levi subgroup $\GL_\gamma$.

To categorify the relations between the generating series we consider the shifted cohomologies
\[
  \overline\cH_{Q^\TT}^{(k)} \colonequals {\bigoplus_{\del\in\N Q_0^\TT}} \rH^\bullet(\overline\cX_\del^{(k)},\bbQ)[-\chi_{Q^\TT}(\del,\del)-k^2],
\]
of which the sum over $k\in\bbN$ is isomorphic to $\cH_{Q^\LL}$ via the homotopy equivalences \Cref{eq:linkheq}.
We add a differential to categorify the relation between $\bbA_Q(x,q)$ and $\bbA_{Q^\LL}(x,q)$ found in \cite{EKL20}.

\begin{introthm}[\Cref{thm:differentialconstruction}]\label{introthm:C}
  For every twocycle quiver $Q^\TT$ obtained from a quiver $Q$ as above there are maps $\d_k$ of degree $-1$ fitting into a chain resolution of $\cH_Q$
  \[
    \cH_Q \twoheadleftarrow\joinrel\relbar \overline \cH_{Q^\TT}^{(0)} \xleftarrow{\ \d_0\ }  \overline \cH_{Q^\TT}^{(1)} \xleftarrow{\ \d_1\ } \ldots.
  \]
  In particular, this makes $\cH_{Q^\LL} \cong \bigoplus_{k\in\bbN} \overline \cH_{Q^\TT}^{(k)}$ a DG vector space quasi-isomorphic to $\cH_Q$.
\end{introthm}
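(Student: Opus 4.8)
The plan is to exhibit the tower $\overline\cH_{Q^\TT}^{(\bullet)}$ as the first page of a spectral sequence attached to a single stratification, and then to prove that this spectral sequence degenerates onto its bottom homological degree, where it computes $\cH_Q$. Using the homotopy equivalences \eqref{eq:linkheq} one may replace $\cH_{Q^\LL}$ by $\bigoplus_{k\in\N}\overline\cH_{Q^\TT}^{(k)}$, so that what must be produced is a chain complex
\[
  \cH_Q\ \xleftarrow{\ \epsilon\ }\ \overline\cH_{Q^\TT}^{(0)}\ \xleftarrow{\ \d_0\ }\ \overline\cH_{Q^\TT}^{(1)}\ \xleftarrow{\ \d_1\ }\ \cdots ,
\]
with each $\d_k$ of cohomological degree $-1$, exact in every positive homological degree and with cokernel $\cH_Q$. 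I would construct all these maps at once from a \emph{rank filtration} in the framed setting, entirely parallel to the rank stratification of \Cref{introthm:A}: for each dimension vector the relevant moduli stack carries a filtration by closed substacks indexed by a rank $k$, whose open strata are, up to affine bundles, the quotients $\overline\cX_\del^{(k)}$ that occur in \eqref{eq:linkheq}, and whose total cohomology computes $\cH_Q$ after the appropriate homotopy reduction and shift normalisation. The localisation long exact sequences of this filtration, assembled over all dimension vectors and all $k$, splice into the complex above, with the $\d_k$ the connecting maps and $\epsilon$ the edge map of the resulting spectral sequence; in particular $\d_{k-1}\d_k=0$ and $\epsilon\,\d_0=0$ hold automatically, being successive differentials on one page.

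Two preliminary points then have to be settled. The first is the numerology: one must check that the depth-$k$ stratum has exactly the codimension for which the Gysin--Thom shift turns $\rH^\bullet(\overline\cX_\del^{(k)})[-\chi_{Q^\TT}(\del,\del)-k^2]$ into a degree $-1$ map onto $\rH^\bullet(\overline\cX_\del^{(k-1)})[-\chi_{Q^\TT}(\del,\del)-(k-1)^2]$ — a dimension count of the same flavour as the one behind \Cref{introthm:A}, now performed relatively to the framing, and which is precisely what forces the shift $-\chi_{Q^\TT}(\del,\del)-k^2$ in the definition of $\overline\cH_{Q^\TT}^{(k)}$. The second is the identification of the strata: that the rank-$k$ locus is, up to an affine bundle, the quotient $\overline\cX_\del^{(k)}$, with $\GL_k$ appearing as the Levi subgroup of the stabiliser — a relative version of the block-form analysis already used to establish \eqref{eq:linkheq}.

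The main obstacle is exactness, i.e.\ the degeneration of the spectral sequence. I would establish it by \emph{dévissage} on $Q$. Every arrow and loop of $Q$ not incident to the two distinguished vertices contributes only an affine-bundle factor that is propagated uniformly through every stack in the construction, so after a common base change the whole complex reduces to that of the model quiver on the vertex set $\{0,1\}$ with a single added twocycle: the rank filtration on the space of a pair of maps $\C^{\del_0}\rightleftarrows\C^{\del_1}$ equipped with a framing, modulo $\GL_{\del_0}\times\GL_{\del_1}\times\GL_k$. For the model quiver every stack in sight is a tower of affine and Grassmannian bundles, hence has pure cohomology concentrated in even Tate degrees, so the complex is determined up to isomorphism by its graded dimensions; exactness is therefore equivalent to the single numerical identity of \cite{EKL20} expressing $\bbA_Q(x,q)$ as the linking specialisation of $\bbA_{Q^\LL}(x,q)$, together with the observation that in these coordinates each $\d_k$ is (a twist of) cup product with the equivariant Euler class of the normal bundle of the corresponding stratum — a non-zero-divisor — so that an induction on $k$ upgrades the numerical identity to honest exactness and identifies the cokernel with $\cH_Q$. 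I expect the purity input needed to pass from Euler characteristics to exactness, and the precise identification of the strata as affine bundles over the $\overline\cX_\del^{(k)}$, to absorb most of the work; once the reduction to the model quiver is in place, the exactness itself is a finite, essentially combinatorial verification.
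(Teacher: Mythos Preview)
Your proposal hinges on the existence, for each $Q^\TT$-dimension vector $\del$, of a single filtered stack whose locally closed strata are (affine bundles over) the $\overline\cX_\del^{(k)}$ for $k=0,1,\ldots$ and whose total cohomology is $\cH_{Q,\del}$. You never identify this stack, and I do not think one exists with those properties. After passing to $Q^{\TU}$ and fixing $\eps$, the pieces $\overline\cY_\eps^{(k)}$ are the $\GL_\eps$-equivariant Grassmannians $[\Gr_{\eps_\star,k}/\GL_\eps]$ for $k=0,\ldots,\eps_\star$, and there is no evident ambient $\GL_\eps$-space carrying \emph{all} of these as the strata of one rank filtration while having trivial total equivariant cohomology for $\eps_\star>0$. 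The rank filtration you allude to --- by $\rk\rho_d$ on $\cM_\del(Q^\TT)$ --- produces the strata of \Cref{introthm:A}, which are the \emph{unframed} $Q^{\TU}$-moduli and carry no $k$; and filtering a fixed framed space $[R_\del^{(K)}(Q^\TT)/(\GL_\del\times\GL_K)]$ by $\rk f$ converges to $\rH^\bullet_{\GL_\del\times\GL_K}(\pt)$, not to $\cH_{Q,\del}$.

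The paper's argument is structurally different and bypasses this obstacle. It first passes to the framed $Q^{\TU}$-moduli so that each $\overline\cY_\eps^{(k)}$ becomes, up to a contractible factor, the single equivariant Grassmannian $[\Gr_{\eps_\star,k}/\GL_\eps]$. The differential is then built by hand: on $\Gr_{n,m}$ one takes the closed/open decomposition $Z_{n,m}\sqcup U_{n,m}$ according to whether the first basis vector lies in the kernel, exhibits a $T_n$-equivariant homotopy equivalence $\iota\colon Z_{n,m}\xrightarrow{\ \sim\ } U_{n,m+1}$, and sets $\d_m$ to be restriction to $U_{n,m+1}$ followed by $\iota^*$ followed by the Gysin map for $Z_{n,m}\hookrightarrow\Gr_{n,m}$. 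This is not the $d_1$ of a filtration spectral sequence on one space; it is a staircase splicing together the long exact sequences of the pairs $(\Gr_{n,m},Z_{n,m})$ for consecutive $m$ via the extrinsic identification $\iota$. Acyclicity then drops out immediately because every term is concentrated in even degrees, so each long exact sequence breaks into short exact sequences --- no d\'evissage, no purity input, and no appeal to the numerical identity of \cite{EKL20} (that identity is in fact recovered as a corollary).

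A secondary issue: even granting a filtered space, your exactness argument identifies $\d_k$ with cup product by a non-zero-divisor Euler class. The $d_1$ of a filtration spectral sequence is a connecting homomorphism, not a Gysin map, and the two do not coincide in general; and the equivariant Euler classes that genuinely appear here are not obviously non-zero-divisors in the relevant rings. The paper never needs such a claim.
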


We construct the differentials \Cref{introthm:C} by considering the unlinking $Q^\TU$ of $Q^\TT$ at the added twocycle.
There are again spaces of framed representations $\cY_\eps^{(k)}$ for $Q^\TU$ and the shifted cohomologies 
\[
  \overline\cH_{Q^\TU}^{(k)} = {\bigoplus_{\eps\in \N Q_0^\TU}} \rH^\bullet\left(\overline\cY_\del^{(k)},\bbQ\right)[-\chi_{Q^\TU}(\eps,\eps)-k^2]
\]
of the $\GL_k$-quotients $\smash{\overline\cY_\eps^{(k)}}$ are isomorphic to $\smash{\overline\cH_{Q^\TT}}$ via a decomposition similar to \Cref{introthm:A}.
The differentials $\d_k$ are constructed via a geometric relation between the spaces for different $k$.

Using the unlinked quiver $Q^\TU$ allows us to give a further categorification at the level of $\cH_Q$-modules.
Using a modification of the natural CoHA-module structure on the framed moduli of $Q^\TU$ considered in  \cite{S16,FR18}, we find an $\cH_Q$-module structure on each $\overline\cH_{Q^\TU}$.
This module structure is compatible with the differential, yielding the following theorem.

\begin{introthm}[{\Cref{thm:DGmodule}}]
  There is a $\cH_Q$-module structure on the space $\overline\cH_{Q^\TU}^{(k)}$ for which each differential $\d_k$ is $\cH_Q$-linear.
  In particular, there are differentially graded module structures
  \[
    (\cH_{Q^\LL},\d) \cong (\overline\cH_{Q^\TT},\d) \cong (\overline \cH_{Q^\TU},\d)
  \]
  which are quasi-isomorphic to the free module $\cH_Q$.
\end{introthm}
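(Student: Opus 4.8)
The plan is to obtain the $\cH_Q$-module structure on $\overline\cH_{Q^\TU}^{(k)}$ by restricting and descending the standard action of a CoHA on the cohomology of moduli of stably framed representations, and then to check that it is compatible with all the maps of the previous theorems. First I would observe that $Q$ is the full subquiver of $Q^\TU$ on the original vertex set $Q_0$: the unlinking recipe only deletes the arrows of the twocycle of $Q^\TT$ — which are not arrows of $Q$ — and adds arrows incident to the new vertex $\star$. Hence for a dimension vector $\del$ supported on $Q_0$ one has $\cM_\del(Q^\TU)=\cM_\del(Q)$ and $\chi_{Q^\TU}(\del,\del)=\chi_Q(\del,\del)$, so $\cH_Q$ embeds into $\cH_{Q^\TU}$ as the $\N Q_0\times\bbZ$-graded subalgebra of summands with $\del_\star=0$. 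Recall from \cite{S16,FR18} that the cohomology of the stably framed moduli of $Q^\TU$ is a module over $\cH_{Q^\TU}$ via pull-push along the Hecke correspondence of a stably framed representation together with a subrepresentation. The modification I would use is to restrict this correspondence to the locus where the sub- and quotient representations are supported on $Q_0$, so that the framing data and the $\star$-component are left untouched; this produces an action of the subalgebra $\cH_Q$, and the cohomological degrees match after the shift by $-\chi_{Q^\TU}(\eps,\eps)-k^2$ because the correspondence alters $\eps$ and $\chi_{Q^\TU}$ only through the $Q_0$-part. The whole correspondence is $\GL_k$-equivariant for the action on the framing, hence descends to $\overline\cY_\eps^{(k)}$, giving the $\cH_Q$-module structure on $\overline\cH_{Q^\TU}^{(k)}$; transporting along the decompositions $\overline\cH_{Q^\TU}^{(k)}\cong\overline\cH_{Q^\TT}^{(k)}$ and the identification $\cH_{Q^\LL}\cong\bigoplus_k\overline\cH_{Q^\TT}^{(k)}$ of \Cref{introthm:C} equips $\overline\cH_{Q^\TT}$ and $\cH_{Q^\LL}$ with $\cH_Q$-module structures as well.

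The main step is the $\cH_Q$-linearity of the differentials $\d_k\colon\overline\cH_{Q^\TU}^{(k+1)}\to\overline\cH_{Q^\TU}^{(k)}$. Recall that $\d_k$ is pull-push along a correspondence $\cZ_k$ built from the block-matrix description of the framed moduli, which changes only the framing rank and the $\star$-data, while the CoHA action is pull-push along the Hecke correspondence, which changes only the $Q_0$-part. I would fit these two correspondences into a commutative cube of (stacky) moduli spaces lying over the correspondence defining the $\cH_Q$-action, and argue that the cube is Tor-independent: every structure map in sight is an inclusion of a locally closed stratum of the rank stratification, an affine-bundle projection arising from the Levi/parabolic picture, or a free $\GL_k$-quotient map, and the ``framing'' and ``$Q_0$'' directions are complementary. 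Base change then shows the two pull-push operations commute, which is exactly the assertion that $\d_k$ intertwines the $\cH_Q$-action. The bulk of the work is the bookkeeping: checking that the cube genuinely commutes at the level of representations and framings, that the relevant normal bundles and fibre dimensions are complementary so the refined Gysin maps commute, and that the competing shifts — $\chi_{Q^\TU}$, $\chi_{Q^\TT}$, the $k^2$ terms, and the Euler pairing of the CoHA grading — cancel consistently. I expect this transversality-and-degree verification to be the principal obstacle; once it is done the commutation is formal.

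Finally I would assemble the statement. By \Cref{introthm:C} the total complex $(\overline\cH_{Q^\TU},\d)\cong(\overline\cH_{Q^\TT},\d)\cong(\cH_{Q^\LL},\d)$ is a resolution of $\cH_Q$, i.e.\ has cohomology concentrated in degree $0$, equal to $\cH_Q$. Both the augmentation $\overline\cH_{Q^\TT}^{(0)}\twoheadrightarrow\cH_Q$ — by a degenerate case of the same commuting-correspondence argument — and, by the previous step, every differential $\d_k$ are $\cH_Q$-linear, so this is a resolution of DG $\cH_Q$-modules. Therefore $(\overline\cH_{Q^\TU},\d)$, and with it $(\overline\cH_{Q^\TT},\d)$ and $(\cH_{Q^\LL},\d)$, is quasi-isomorphic as a DG $\cH_Q$-module to $\cH_Q$ placed in degree $0$, i.e.\ to the free module of rank one. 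That the displayed isomorphisms are themselves $\cH_Q$-linear is automatic, since the module structures on $\overline\cH_{Q^\TT}$ and $\cH_{Q^\LL}$ were defined by transport along exactly these isomorphisms.
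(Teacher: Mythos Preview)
Your plan is correct and matches the paper's approach: the $\cH_Q$-action is the restriction of the framed-CoHA module of \cite{S16,FR18} to extensions by representations supported on $Q_0\subset Q_0^\TU$, descended along the $\GL_k$-quotient, and the $\cH_Q$-linearity of $\d_k$ is proved by a base-change argument showing the two correspondences commute. The only substantive difference is that the step you flag as the ``principal obstacle'' evaporates in the paper's treatment: after quotienting by $\GL_k$ one has product decompositions $Y_\eps^{(k)}/\GL_k\cong\Gr_{\eps_\star,k}\times R_\eps(Q^\TU)$ and $E^{(k)}_{\eps,(\del,0)}/\GL_k\cong\Gr_{\eps_\star,k}\times E_{\eps,(\del,0)}$, under which the differential acts only on the Grassmannian factor and the CoHA correspondence only on the representation factor (in particular $\d_k$ does not touch the $\star$-data of the representation at all, contrary to what you wrote). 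Your abstract cube then becomes an explicit $3\times 3$ grid of products of maps, every square of which is cartesian by inspection, so no genuine transversality or Tor-independence check is needed; the only technical wrinkle is that one first computes in $(\GL_{\eps'}\times T_{\eps_\star})$-equivariant cohomology and then passes to Weyl-group invariants, exactly as in the construction of $\d_k$ itself.
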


\subsection{Further questions}
We firstly remark that our construction introduces a (DG) $\cH_Q$-module structure on $\cH_{Q^\LL}$ and $\cH_{Q^\UU}$, but ignores the product that already exists on these algebras.
Since both constructions seem to appear naturally it would be interesting to explore the role of these algebra structures.

Secondly, there is the question of potentials.
In the framework \cite{KS11} the CoHA $\cH_Q$ is the special case of the ``critical'' CoHA $\cH_{Q,W}$ of a quiver with potential $(Q,W)$ with $W=0$.
Since our setup is purely geometrical, it might be possible to extend linking and unlinking to quivers with potential using the comprehensive sheaf-theoretic framework that exists in the literature (as in e.g. \cite{DM20}).

Finally, we want to mention that another categorification of (un)linking was presented in \cite{DFKR24} which associates certain (DG) algebras to the quivers $Q$, $Q^\LL$, and $Q^\UU$.
These algebras are also related via a filtration and a chain resolution, but their role is switched compared to our setting.
It would be interesting to explore if there is a duality relating the two categorifications.

\subsection*{Acknowledgements}
  The author thanks Vladimir Dotsenko for interesting discussions that led him to write this paper.

\section{Preliminaries}

\subsection{Quiver moduli}

Let $Q = (Q_0,Q_1)$ be a finite quiver, where $Q_0$ denotes the vertex set and $Q_1$ the set of arrows.
We denote arrows by $a\colon i\to j$ where $i$ and $j$ are the source and target.
We write $\N Q_0$ for the monoid of dimension vectors, for $\del,\del'\in\N Q_0$ we write
\[
  \chi_Q(\del,\del') \colonequals \sum_{i\in Q_0} \del_i\del_i' - \sum_{a\colon\! i\to j\kern 1pt \in Q_1} \del_j\del_i',
\]
for the \emph{Euler pairing}.
The quiver $Q$ is said to be \emph{symmetric} if $\chi_Q$ is a symmetric bilinear form.

For any fixed $\del\in \N Q_0$ we consider the affine space of representations
\[
  R_\del(Q) = \prod_{a\colon\! i\to j\kern 1pt \in Q_1} \Mat_\C(\del_j,\del_i),
\]
whose elements we write as tuples $\uprho = (\uprho_a)_{a\in Q_1}$.
Isomorphism classes of representations correspond to the orbits of the algebraic group
\[
  \GL_\del \colonequals \prod_{i\in Q_0} \GL_i(\C),
\]
which acts on $R_\del(Q)$ by base-change.
The associated \emph{quiver moduli space} is the quotient stack
\[
  \cM_\del(Q) = [R_\del(Q)/\GL_\del].
\]
We note that $\cM_\del(Q)$ is a smooth Artin stack of dimension $\dim \cM_\del(Q) = -\chi_Q(\del,\del)$.

\subsection{Cohomologies and CoHA}

Because quiver moduli spaces are quotient stacks, their singular cohomology is described naturally via equivariant cohomology\footnote{Here we always take cohomology with coefficients in $\bbQ$, abbreviating $\rH^\bullet(-) = \rH^\bullet(-,\bbQ)$.}:
\[
  \rH^\bullet(\cM_\del(Q)) = \rH^\bullet_{\GL_\del}(R_\del(Q)) \cong \rH^\bullet_{\GL_\del}(\pt),
\]
where the second isomorphism follows because $R_\del(Q)$ is contractible.
Since $\cM_\del(Q)$ is smooth there is also a dual description using (equivariant) Borel--Moore homology
\[
  \rH^\bullet(\cM_\del(Q)) \cong \rH_{-\bullet-2\chi_Q(\del,\del)}^\BM(\cM_\del(Q)) \cong \rH_{-\bullet-2\chi_Q(\del,\del)}^{\BM,\GL_\del}(R_\del(Q)),
\]
where the shift is twice the dimension of $\cM_{\del}(Q)$.
We adopt the standard convention of normalising the cohomology by a shift in dimension, and consider the cohomologically graded vector spaces
\[
  \cH_{Q,\del}^\bullet \colonequals \rH^{\bullet - \chi_Q(\del,\del)}(\cM_\del(Q)) \cong \rH_{-\bullet - \chi_Q(\del,\del)}^\BM(\cM_\del(Q)).
\]

The cohomology of quiver moduli spaces can be given the structure of a graded algebra, the Cohomological Hall Algebra (CoHA), which was defined by Kontsevich--Soibelman \cite{KS11} as a generalisation of the Ringel--Hall algebra over finite fields.
The underlying $\N Q_0$-graded vector space of the CoHA is the sum of normalised cohomologies
\[
  \cH_Q \colonequals \bigoplus_{\del\in \N Q_0} \cH_{Q,\del}.
\]
and the algebra structure is defined via an extension product defined as follows.
Given two dimension vectors $\del^{(1)}$ and $\del^{(3)}$ there is a diagram
\begin{equation}\label{eq:COHAcorr}
  \cM_{\del^{(1)}}(Q) \times \cM_{\del^{(3)}}(Q) \xleftarrow{\ p\ } \cE_{\del^{(1)},\del^{(3)}} \xrightarrow{\ q\ } \cM_{\del^{(1)}+\del^{(3)}}(Q),
\end{equation}
where $\cE_{\del^{(1)},\del^{(3)}}$ is the bundle parametrising extensions $0\to \rho^{(1)} \to \rho^{(2)} \to \rho^{(3)} \to 0$ between representations, and $q$ is a closed immersion of relative dimension $\chi_Q(\del^{(3)},\del^{(1)})$ sending an extension to the middle term.
These morphism induce maps on cohomology
\[
  \rH^{\bullet}(\cM_{\del^{(1)}}(Q) \times \cM_{\del^{(3)}}(Q)) \xrightarrow{\ p^*\ }
  \rH^{\bullet}(\cE_{\del^{(1)},\del^{(3)}}(Q))
  \xrightarrow{\ q_*\ }
  \rH^{\bullet - 2\chi_Q(\del^{(3)},\del^{(1)})}(\cM_{\del^{(1)}+\del^{(3)}}(Q)),
\]
and the product $\cdot$ on $\cH_Q$ is the composition of this map with the K\"unneth isomorphism.
If $Q$ is a symmetric quiver then composition of $p^*$ and $q^*$ is degree $0$ with respect to the shifts in $\cH_Q$.
In this case $\cH_Q$ is therefore an $\N Q_0\times\bbZ$-graded algebra.

\subsection{Generating series}

For a cohomologically graded vector space $V = \bigoplus_{n\in\bbZ} V^n$ we consider the Poincaré series in $\bbZ(\!(q^{1/2})\!)$ defined as\footnote{The use of half powers $q^{1/2}$ is a standard convention, which originates in enumerative theories over finite fields.}
\[
  P(V,q) = \sum_{n\in\bbZ} (-q^{1/2})^n \cdot \dim V^n.
\]
When working with quivers, we will consider generating series keeping track of the addition grading over a monoid $\N Q_0$.
The Poincaré series extends to $V = \bigoplus_{\del\in\N Q_0}V_\del$ as
\[
  P(V,x,q) \colonequals \sum_{\del\in\N Q_0} P(V_\del,q) \cdot x^\del
  =
  \sum_{\del\in\N Q_0} \sum_{n\in\bbZ} (-q^{1/2})^n \cdot \dim V^n_\del \cdot x^\del,
\]
where $x^\del = \prod_{i\in Q_0} x_i^{\del_i}$ in multi-index notation; the result is a formal series in $\bbZ(\!(q^{1/2})\!)[\![x_i \mid i\in Q_0]\!]$.
The generating series of a quiver $Q$ is precisely the Poincaré series of its CoHA:
\[
  \bbA_Q(x,q) \colonequals P(\cH_Q,x,q) = \sum_{\del\in \N Q_0} P(\cH_{Q,\del},q) \cdot x^\del.
\]
Each component $\cH_{Q,\del}$ is a shift of the $\GL_\del$-equivariant cohomology of a point, and the generating series therefore has a very explicit expression
\[
  \bbA_Q(x,q)
  = \sum_{\del\in\N Q_0} \sum_{n\in\bbZ} \frac{(-q^{1/2})^{\chi_Q(\del,\del)}}{\prod_{i\in Q_0} (1-q)(1-q^2)\cdots(1-q^{\del_i})} \cdot x^\del.
\]
Some variations of the above generating series appear in the literature.
In particular, \cite{EKL20} associates to any (symmetric) quiver a series
\[
  P^Q(y,t) = \sum_{\del\in \N Q_0} \frac{(-t)^{\sum_{i,j\in Q_0} C_{ij} \del_i\del_j}\cdot y^\del}{\prod_{i\in Q_0}(1-t^2)(1-t^4)\cdots(1-t^{2\del_i})},
\]
where $(C_{ij})_{i,j\in Q_0}$ is the adjacency matrix of $Q$.
Since we wish to categorify results from \cite{EKL20}, it is worth explaining how this series is related to $\bbA_Q(x,t)$.

\begin{lem}
  There is an equality $\bbA_Q(x,q) = P^Q(q^{-1/2}x,q^{-1/2})$.
\end{lem}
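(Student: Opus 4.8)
The plan is to compare the two series coefficient by coefficient after performing the substitution $y_i \mapsto q^{-1/2}x_i$ and $t\mapsto q^{-1/2}$ in $P^Q$. The first ingredient is the elementary identity relating the Euler form to the adjacency matrix: since $C_{ij}$ is the number of arrows $i\to j$, the arrow sum in $\chi_Q$ is $\sum_{a\colon i\to j}\del_j\del_i = \sum_{i,j\in Q_0}C_{ij}\del_i\del_j$, so that
\[
  \chi_Q(\del,\del) = \sum_{i\in Q_0}\del_i^2 - \sum_{i,j\in Q_0}C_{ij}\del_i\del_j.
\]
I would use this to eliminate the quadratic exponent $\sum_{i,j}C_{ij}\del_i\del_j$ appearing in $P^Q$ in favour of $\sum_i\del_i^2 - \chi_Q(\del,\del)$.

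Next I would rewrite the denominator of the $\del$-th term of $P^Q$. With $t = q^{-1/2}$ one has $t^{2k} = q^{-k}$ and $1-q^{-k} = -q^{-k}(1-q^k)$, so
\[
  \prod_{k=1}^{\del_i}(1-t^{2k}) = (-1)^{\del_i}\,q^{-\binom{\del_i+1}{2}}\prod_{k=1}^{\del_i}(1-q^k),
\]
and taking the product over $i$ the denominator differs from that of $\bbA_Q$ by the factor $(-1)^{\sum_i\del_i}q^{-\sum_i\binom{\del_i+1}{2}}$. For the numerator, the substitution gives $y^\del = q^{-\frac12\sum_i\del_i}x^\del$ and $(-t)^{\sum_{i,j}C_{ij}\del_i\del_j} = (-1)^{\sum_{i,j}C_{ij}\del_i\del_j}q^{-\frac12\sum_{i,j}C_{ij}\del_i\del_j}$. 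Assembling everything and using $\binom{\del_i+1}{2}=\tfrac12(\del_i^2+\del_i)$, the net power of $q$ in the $\del$-th coefficient collapses to $\tfrac12\big(\sum_i\del_i^2 - \sum_{i,j}C_{ij}\del_i\del_j\big) = \tfrac12\chi_Q(\del,\del)$, which is exactly the power of $q$ in the corresponding term of $\bbA_Q(x,q)$.

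It then remains to match signs: the construction above produces the overall sign $(-1)^{\sum_i\del_i + \sum_{i,j}C_{ij}\del_i\del_j}$, and I would close the argument by observing that $\del_i^2 \equiv \del_i \pmod 2$, so this exponent is congruent mod $2$ to $\sum_i\del_i^2 - \sum_{i,j}C_{ij}\del_i\del_j = \chi_Q(\del,\del)$, matching the sign $(-1)^{\chi_Q(\del,\del)}$ in $\bbA_Q$. Since the $\del$-th coefficients then agree, the identity follows. The computation is routine; the only place demanding care is precisely this sign bookkeeping — in particular the parity identity $\del_i^2 \equiv \del_i \pmod 2$ is what reconciles the $(-1)^{\del_i}$'s coming from inverting the $q$-Pochhammer factors with the sign concealed inside $(-q^{1/2})^{\chi_Q(\del,\del)}$.
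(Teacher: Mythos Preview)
Your proof is correct and follows essentially the same approach as the paper: a direct term-by-term comparison using the identity $\chi_Q(\del,\del) = \sum_i \del_i^2 - \sum_{i,j} C_{ij}\del_i\del_j$ together with the elementary rewriting of the $q$-Pochhammer factors under $t\mapsto q^{-1/2}$. The only cosmetic difference is that the paper absorbs the sign into the factor $(-q^{1/2})^{\del_i(\del_i+1)}$ (whose exponent is even), whereas you invoke the parity identity $\del_i^2\equiv\del_i\pmod 2$ explicitly; these are two ways of saying the same thing.
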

\begin{proof}
  By definition of the Euler pairing, we have
  \[
    \sum_{i,j\in Q_0} C_{ij} \del_i\del_j = \sum_{i\in Q_0} \del_i^2 - \chi_Q(\del,\del) = \sum_{i\in Q_0} \del_i(\del_i+1) - \sum_{i\in Q_0} \del_i - \chi_Q(\del,\del).
  \]
  Hence, after setting $t=q^{-1/2}$ we obtain
  \[
    \begin{aligned}
      P^Q(y,q^{-1/2}) &=  \sum_{\del\in \N Q_0} \frac{(-q^{1/2})^{\chi_Q(\del,\del) + \sum_{i\in Q_0} \del_i}\cdot y^\del}{\prod_{i\in Q_0}(-q^{1/2})^{\del_i(\del_i+1)}(1-q^{-1})(1-q^{-2})\cdots(1-q^{-\del_i})} \\
                      &=  \sum_{\del\in \N Q_0} \frac{(-q^{1/2})^{\chi_Q(\del,\del)}\cdot (q^{1/2}y)^\del}{\prod_{i\in Q_0} (1-q)(1-q^2)\cdots(1-q^{\del_i})},
    \end{aligned}
  \]
  which becomes equal to $\bbA_Q(x,q)$ after setting $y = q^{-1/2}x$.
\end{proof}

\section{Unlinking}\label{sec:unlink}

In this section we relate the unlinking procedure of \cite{EKL20} to a stratification on the moduli spaces of a quiver.
Throughout, we fix a quiver $Q = (Q_0,Q_1)$ which contains a distinguished two-cycle consisting of arrows $c\colon 0\to 1$, $d\colon 1\to 0$ between distinct vertices $0,1\in Q_0$.

\subsection{Unlinking}\label{ssec:unlink}
We start by describing the unlinking process of \cite{EKL20} producing the quiver $Q^\UU$ out of $Q$.
Our description differs somewhat from the one given in \cite{EKL20}, the main difference being that we explicitly name all arrows in $Q^\UU$ and that our definition applies also to non-symmetric quivers.

\begin{defn}\label{def:unl}
  The \emph{unlinking} of $Q$ at $(c,d)$ is the quiver $Q^\UU = (Q_0^\UU,Q_1^\UU)$ with vertices
  \[
    Q_0^\UU \colonequals Q_0 \sqcup \{\star\}
  \]
  and arrows obtained from the arrows $a\colon i\to j$ in $Q_1$ via the following recipe:
  \begin{itemize}
  \item if $i \neq 0,1$ and $j\neq 0,1$ there is a single arrow $a\colon i\to j$ in $Q_1^\UU$,
  \item if $i \neq 0,1$ and $j\in \{0,1\}$ there are two arrows in $Q^\UU_1$ denoted
    \[
      a \colon i \to j,\quad a_\star \colon i\to \star,
    \]
  \item if $i \in\{0,1\}$ and $j\neq 0,1$ there are two arrows in $Q^\UU_1$ denoted
    \[
      a \colon i \to j,\quad a^\star \colon \star \to j,
    \]
  \item if $i\in\{0,1\}$ and $j\in \{0,1\}$ and additionally $a\not\in \{c,d\}$ there are four arrows in $Q^\UU_1$ denoted
    \[
      \begin{gathered}
        a \colon i \to j,\quad a^\star \colon \star \to j,\\
        a_\star \colon i \to \star,\quad a_\star^\star \colon \star \to \star,
      \end{gathered}
    \]
  \item the arrow $a=c$ contributes a single arrow $c^\star_\star\colon \star \to \star$ in $Q^\UU_1$.
  \end{itemize}
\end{defn}

Let $(C_{ij})_{i,j\in Q_0}$ denote the adjacency matrices of $Q$.
Then by counting the contributions of each arrow in \cref{def:unl} we see that the adjacency matrix $(C^\UU_{ij})_{i,j\in Q_0^\UU}$ of $Q^\UU$ is given by
\[
  C^\UU_{ij} =
  \begin{cases}
    C_{ij} & \text{if } i,j\in Q_0 \text{ with } (i,j) \neq (0,1), (1,0) \\
    C_{ij} - 1 & \text{if } (i,j) = (0,1) \text{ or } (i,j) = (1,0)\\
    C_{i0} + C_{i1} & \text{if } i \in Q_0\setminus\{0,1\} \text{ and } j = \star \\
    C_{i0} + C_{i1}-1 & \text{if } i \in \{0,1\} \text{ and } j = \star \\
    C_{0j} + C_{1j} & \text{if } i = \star \text{ and } j \in Q_0\setminus\{0,1\} \\
    C_{0j} + C_{1j}-1 & \text{if } i = \star \text{ and } j \in \{0,1\} \\
    C_{00} + C_{01} + C_{10} + C_{11} - 1 & \text{if } i = j = \star
  \end{cases}.
\]
If $Q$ is a symmetric quiver, this agrees with the adjacency matrix constructed in \cite{EKL20}.

We define a map relating the dimension vectors of $Q$ and $Q^\UU$.
Let $u\colon \N Q_0^\UU \to \N Q_0$ be the linear map which sends $\eps$ to the dimension vector $u(\eps) = (u(\eps)_i)_{i\in Q_0}$ with
\[
  u(\eps)_i = \begin{cases}
    \eps_i + \eps_\star & \text{if } i =0,1\\
    \eps_i & \text{otherwise}.
  \end{cases}
\]
Then this linear map relates the Euler forms of the two quivers according to the following formula.

\begin{lem}
  For all $\eps,\eps'\in\N Q_0^\UU$
  \[
    \chi_{Q^\UU}(\eps,\eps') - \chi_Q(u(\eps),u(\eps')) = \eps_0\eps_1' + \eps_1\eps_0'.
  \]
\end{lem}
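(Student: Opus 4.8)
The plan is to reduce the statement to a direct comparison of adjacency matrices. Both sides of the claimed identity are bilinear in $(\eps,\eps')$: the right-hand side visibly so, and the left-hand side because $\chi_{Q^\UU}$ and $\chi_Q$ are bilinear and the map $u$ is linear. Hence it is enough to match the two bilinear forms $\chi_{Q^\UU}(\eps,\eps')$ and $\chi_Q(u(\eps),u(\eps'))$ coefficient by coefficient. Writing $\chi_Q(\del,\del') = \sum_{i,j\in Q_0}(\delta_{ij}-C_{ij})\,\del_j\del_i'$, where $\delta_{ij}$ is the Kronecker delta, and likewise for $Q^\UU$, this comes down to understanding how the matrix $C^\UU$ displayed above is obtained from $C$ together with the merging map $u$. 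One could just enumerate the cases $\eps=e_a$, $\eps'=e_b$ according to whether $a,b$ lie in $Q_0\setminus\{0,1\}$, in $\{0,1\}$, or equal $\star$, but it is cleaner to proceed as follows.

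I would introduce the ``doubled'' matrices $\tilde C$ and $\tilde I$, indexed by $Q_0^\UU$, obtained from $C$ and from the identity matrix by the formal substitution that treats $\star$ as ``$0+1$'': so $\tilde C_{ij}=C_{ij}$ and $\tilde I_{ij}=\delta_{ij}$ for $i,j\in Q_0$, while $\tilde C_{i\star}=C_{i0}+C_{i1}$, $\tilde C_{\star j}=C_{0j}+C_{1j}$, $\tilde C_{\star\star}=C_{00}+C_{01}+C_{10}+C_{11}$, and analogously for $\tilde I$ (so in particular $\tilde I_{\star\star}=2$ and $\tilde I_{i\star}=\tilde I_{\star i}=1$ for $i\in\{0,1\}$). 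Since $u(\eps)_j$ equals $\eps_j+\eps_\star$ when $j\in\{0,1\}$ and $\eps_j$ otherwise, expanding $\sum_{i,j\in Q_0}(\delta_{ij}-C_{ij})\,u(\eps)_j\,u(\eps')_i$ yields, term by term,
\[
  \chi_Q\big(u(\eps),u(\eps')\big)=\sum_{i,j\in Q_0^\UU}\big(\tilde I_{ij}-\tilde C_{ij}\big)\,\eps_j\eps_i',
\]
i.e.\ the form $\chi_Q\circ u$ is represented by the pair $(\tilde I,\tilde C)$ in precisely the way $\chi_{Q^\UU}$ is represented by $(I,C^\UU)$, where $I$ is the identity matrix on $Q_0^\UU$.

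Subtracting, the identity to be proved becomes $\sum_{i,j}(E_{ij}-F_{ij})\,\eps_j\eps_i'=\eps_0\eps_1'+\eps_1\eps_0'$, where $E\colonequals\tilde C-C^\UU$ and $F\colonequals\tilde I-I$; both are supported on the indices $\{0,1,\star\}$. Reading the entries off the displayed formula for $C^\UU$, the matrix $E$ has a $1$ exactly in positions $(0,1),(1,0),(0,\star),(1,\star),(\star,0),(\star,1),(\star,\star)$ --- one for each ``$-1$'' occurring there --- while $F$ has a $1$ exactly in positions $(0,\star),(1,\star),(\star,0),(\star,1),(\star,\star)$. Therefore $E-F$ is supported only at $(0,1)$ and $(1,0)$, with value $1$, and $\sum_{i,j}(E-F)_{ij}\,\eps_j\eps_i'=\eps_1\eps_0'+\eps_0\eps_1'$, as required. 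The only real pitfall is the bookkeeping of the diagonal correction --- it is easy to overlook that the doubled identity has $\tilde I_{\star\star}=2$ together with nonzero mixed $\star$-entries --- and the point of the lemma is exactly that all the ``$\star$-mixed'' terms of $E$ are cancelled by those of $F$, leaving only the two entries coming from the removed two-cycle $(c,d)$.
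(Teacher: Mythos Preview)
Your proof is correct and follows essentially the same approach as the paper's: both expand $\chi_{Q^\UU}-\chi_Q\circ u$ as a difference of bilinear forms and compare the identity and adjacency contributions separately, using the explicit formula for $C^\UU$. Your introduction of the ``doubled'' matrices $\tilde C$ and $\tilde I$ is a tidy way to organise the same bookkeeping that the paper carries out by direct expansion.
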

\begin{proof}
  The Euler forms are related to the adjacency matrices of $Q$ and $Q^\UU$ via
  \[
    \chi_Q(\del,\del') = \sum_{i\in Q_0} \del_i\del_i' - \sum_{i,j\in Q_0} C_{ij} \del_i\del_j',\quad
    \chi_{Q^\UU}(\eps,\eps') = \sum_{i\in Q_0^\UU} \eps_i\eps_i' - \sum_{i,j\in Q_0^\UU} C^\UU_{ij} \eps_i\eps_j'.
  \]
  Since $u(\eps)_i = \eps_i$ for $i\neq 0,1$, the difference in the first summations is given by
  \[
    \begin{aligned}
      \sum_{i\in Q_0^\UU} \eps_i\eps_i'- \sum_{i\in Q_0} u(\eps)_iu(\eps')_i
      &=
        \eps_\star\eps_\star' + \sum_{i=0,1} \left(\eps_i\eps_i'  -  (\eps_i + \eps_\star)(\eps_i'+\eps_\star')\right)
      \\
      &=
        -(\eps_\star\eps_\star' + \eps_0\eps_\star' + \eps_\star\eps_0' + \eps_1\eps_\star' + \eps_\star\eps_1')
    \end{aligned}
  \]
  For the second summation, the explicit relation between $C^\UU_{ij}$ and $C_{ij}$ yields
  \[
    \begin{aligned}
      \sum_{i,j\in Q_0^\UU} C^\UU_{ij}\eps_i\eps_j'
      &=
        \left(\sum_{i,j\in Q_0} C_{ij}\eps_i\eps_j'\right)
        - \eps_0\eps_1'
        - \eps_1\eps_0'.
      \\
      &\quad+
        \left(\sum_{i\in Q_0,j\in\{0,1\}} C_{ij}\eps_i\eps_\star'\right)
        - \eps_0\eps_\star'
        - \eps_1\eps_\star'    \\
      &\quad+
        \left(\sum_{i\in \{0,1\},j\in Q_0} C_{ij}\eps_\star\eps_j'\right)
        - \eps_\star\eps_0'
        - \eps_\star\eps_1' \\
      &\quad+
        \left(\sum_{i,j\in \{0,1\}} C_{ij}\eps_\star\eps_\star'\right)
        - \eps_\star\eps_\star'
      \\&=
      \sum_{i,j\in Q_0} C_{ij} u(\eps)_i u(\eps')_j
      - (\eps_\star\eps_\star'
      + \eps_0\eps_*'
      + \eps_\star\eps_0'
      + \eps_1\eps_*'
      + \eps_\star\eps_1')
      - \eps_0\eps_1'
      - \eps_1\eps_0'.
    \end{aligned}
  \]
  Putting this together, we find
  \[
    \chi_{Q^\UU}(\eps,\eps') - \chi_Q(u(\eps),u(\eps'))=  \eps_0\eps_1' + \eps_1\eps_0'. \qedhere
  \]
\end{proof}

In particular, we find that the dimensions of the moduli spaces of representations are related by
\[
  \dim \cM_{u(\eps)}(Q) = \dim \cM_\eps(Q^\UU) + 2\eps_0\eps_1
\]

\subsection{Stratification}\label{ssec:strat}

The two-cycle $(c,d)$ also gives rise to a stratification of the moduli spaces of $Q$.
Given a dimension vector $\del \in \N Q_0$, there is a well-defined $\GL_\del$-invariant function
\[
  \rk d \colon R_\del(Q) \to \N,\quad \rho \mapsto \rk \rho_d.
\]
In what follows we will denote the level sets of this function by $S_{\del,\ell} \colonequals \{\rk d=\ell\}$, and write $\overline S_{\del,\ell} \colonequals \left\{\ \rk d \leq \ell\ \right\}$ for the sublevel sets.
Since $\rk d$ is lower semicontinuous each $\overline S_{\del,\ell}$ is a closed subvariety and $S_{\del,\ell} =  S_{\del,\ell}\setminus \overline S_{\del,\ell-1}$ is locally closed.
We therefore obtain a stratification of $R_\del(Q)$ 
\[
  R_\del(Q) = \bigsqcup_{\del\leq \min(\del_0,\del_1)} S_{\del,\ell},
\]
into locally closed subsets.
The stratification is $\GL_\del$-invariant, and therefore descends to a stratification of the moduli stack
\[
  \cM_\updelta(Q) = \bigsqcup_{\ell \leq \min(\del_0,\del_1)} \cS_{\del,\ell},
\]
where the strata $\cS_{\del,\ell} = [S_{\del,\ell}/\GL_\del]$ are locally closed substacks.
These stacky strata can be presented as a quotient of an affine space by an algebraic group.

\begin{prop}\label{prop:stratumquot}
  For each $\del\in\N Q_0$ and $0\leq \ell\leq \min(\del_0,\del_1)$, consider the subvariety of $S_{\del,\ell}$
  \[
    F_{\del,\ell}
    \colonequals 
    \left\{
      \rho \in R_\del(Q)
      \ \middle|\ 
      \rho_d = 
      \left(
        \begin{array}{c|c}
          0 & 0 \\[.2em]\hline\rule{0pt}{1em} 0 & I_{\ell\times\ell}
        \end{array}
      \right)
    \right\},
  \]
  and the subgroup of $\GL_\del$ given by
  \[
    P_{\del,\ell} \colonequals \left\{
      (g_i)_{i\in Q_0} \in \GL_\del
      \ \middle|\ 
      \begin{aligned}
      g_0 &=
      \left(
        \begin{array}{c|c}
          a & 0 \\[.2em]\hline\rule{0pt}{1em} b & c
        \end{array}
      \right)
      \\
      g_1 &= 
      \left(
        \begin{array}{c|c}
          d & e \\[.2em]\hline\rule{0pt}{1em} 0 & c
        \end{array}
      \right)     
      \end{aligned}
      ,\quad
    \begin{gathered}
      \text{ for some } c\in \GL(\ell),\\
      a\in \GL(\del_0-\ell),\,
      d\in \GL(\del_1-\ell),
      \\
      b\in\Mat(\del_0-\ell,\ell),\,
      e\in\Mat(\ell,\del_1-\ell)\\
      \end{gathered}
    \right\}.
  \]
  Then the natural inclusions yield an isomorphism $[F_{\del,\ell}/P_{\del,\ell}] \hookrightarrow \cS_{\del,\ell}$.
  In particular, the stratum $\cS_{\del,\ell}$ has codimension $\del_0\del_1-(\del_0+\del_1-\ell)\ell$ in $\cM_\del(Q)$.
\end{prop}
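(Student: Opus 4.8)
The plan is to realise $\cS_{\del,\ell}$ as an associated-bundle quotient, reducing the statement to the elementary fact that rank-$\ell$ matrices form a single orbit under change of basis. First I would exploit that the $\GL_\del$-action on $R_\del(Q)$ is component-wise over the arrows: there is a $\GL_\del$-equivariant product decomposition $R_\del(Q) \cong \Mat_\C(\del_0,\del_1) \times W$, where the first factor records the $d$-component $\rho_d$ and $W = \prod_{a\ne d}\Mat_\C(\del_{j},\del_{i})$ carries the obvious $\GL_\del$-action. Under this decomposition the condition $\rk\rho_d=\ell$ only constrains the first factor, so $S_{\del,\ell}$ corresponds to $Z_\ell\times W$, where $Z_\ell\subseteq\Mat_\C(\del_0,\del_1)$ is the locally closed subvariety of rank-exactly-$\ell$ matrices, while $F_{\del,\ell}$ corresponds to $\{\rho^0_d\}\times W$ for $\rho^0_d$ the distinguished matrix of \Cref{eq:dblock}.

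The key input is that $Z_\ell$ is a single orbit: the subgroup $\GL_{\del_0}\times\GL_{\del_1}\subseteq\GL_\del$ acts transitively on $Z_\ell$ by $(g_0,g_1)\cdot M = g_0 M g_1^{-1}$ (this is exactly the normal form \Cref{eq:dblock} obtained by Gaussian elimination). A direct block computation — writing out $g_0\rho^0_d g_1^{-1}=\rho^0_d$ — shows that the stabiliser of $\rho^0_d$ in $\GL_{\del_0}\times\GL_{\del_1}$ is precisely the image of $P_{\del,\ell}$ under the projection $\pi\colon\GL_\del\twoheadrightarrow\GL_{\del_0}\times\GL_{\del_1}$, forcing $g_0,g_1$ into the stated block-triangular shape with a common corner block. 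Since the remaining factors $g_i$ ($i\ne 0,1$) are unconstrained in the definition of $P_{\del,\ell}$, the subgroup $P_{\del,\ell}$ is exactly $\pi^{-1}$ of that stabiliser, so orbit–stabiliser gives a $\GL_\del$-equivariant identification $Z_\ell\cong\GL_\del/P_{\del,\ell}$ taking the base point to $\rho^0_d$. Feeding this back, $S_{\del,\ell}\cong(\GL_\del/P_{\del,\ell})\times W$ with the diagonal action, which is the associated bundle $\GL_\del\times^{P_{\del,\ell}}W$; the standard shearing equivalence then yields $\cS_{\del,\ell}=[S_{\del,\ell}/\GL_\del]\cong[W/P_{\del,\ell}]\cong[F_{\del,\ell}/P_{\del,\ell}]$, using $W\cong F_{\del,\ell}$ as $P_{\del,\ell}$-varieties, and unwinding the identifications shows this is the map induced by the inclusions $F_{\del,\ell}\hookrightarrow S_{\del,\ell}$, $P_{\del,\ell}\hookrightarrow\GL_\del$.

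The codimension is then a dimension count. One has $\dim\cS_{\del,\ell}=\dim F_{\del,\ell}-\dim P_{\del,\ell}=(\dim R_\del(Q)-\del_0\del_1)-\dim P_{\del,\ell}$, whereas $\dim\cM_\del(Q)=\dim R_\del(Q)-\dim\GL_\del$, so $\codim\cS_{\del,\ell}=\del_0\del_1-(\dim\GL_\del-\dim P_{\del,\ell})$. Reading off the block sizes of $P_{\del,\ell}$ gives $\dim\GL_\del-\dim P_{\del,\ell}=\ell(\del_0+\del_1-\ell)$, which is the claimed formula $\del_0\del_1-(\del_0+\del_1-\ell)\ell$.

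I expect the main obstacle to be bookkeeping rather than conceptual. The substantive point — transitivity on rank-$\ell$ matrices — is elementary linear algebra; the care is needed in (i) verifying that the setwise stabiliser of $F_{\del,\ell}$, equivalently the stabiliser of $\rho^0_d$ under the $d$-component action, is exactly the group $P_{\del,\ell}$ as written (the shared corner block is the one non-obvious feature), and (ii) being precise that the passage through $Z_\ell\cong\GL_\del/P_{\del,\ell}$ and the shearing equivalence $[(\GL_\del/P_{\del,\ell})\times W/\GL_\del]\cong[W/P_{\del,\ell}]$ reproduces the inclusion-induced morphism and not some twist of it.
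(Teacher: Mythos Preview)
Your argument is correct and is essentially the same orbit--stabiliser reduction as the paper's, just packaged slightly differently: the paper performs the reduction in two stages, first fibering $S_{\del,\ell}$ over $\Gr(\del_0,\ell)\times\Gr(\del_1,\del_1-\ell)$ via $\rho\mapsto(\im\rho_d,\ker\rho_d)$ to cut down to a block-triangular stabiliser $H$, and then over $\GL(\ell)$ via the lower-right block to pass from $H$ to $P_{\del,\ell}$, whereas you collapse these into a single application of orbit--stabiliser for the transitive action on rank-$\ell$ matrices and phrase the resulting identification in associated-bundle/shearing language. Both routes amount to the same computation and the same stabiliser; your one-step version is arguably cleaner, while the paper's two-step version makes the Grassmannian geometry (and the appearance of the shared corner block $c$) a bit more visible.
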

\begin{proof}
  For each pair $(\del,\ell)$ the stratum $S_{\del,\ell}$ is a Zariski-locally trivial fibre bundle
  \[
    h\colon S_{\del,\ell} \to \Gr(\delta_0,\ell) \times \Gr(\delta_1,\delta_1-\ell),\quad \rho \mapsto (\im \rho_d,\ \ker \rho_d).
  \]
  This map is moreover $\GL_\del$-equivariant with respect to the natural action of the factor $\GL(\delta_0)$ on subspaces of $\C^{\delta_0}$ and the transpose action of the factor $\GL(\delta_1)$ on subspaces of $\C^{\delta_1}$, with all other factors acting trivially.
  If $V\subset \C^{\delta_0}$ denotes the subspace spanned by the last $\ell$ basis vectors and $W \subset \C^{\delta_1}$ the subspace spanned by the first $\delta_1-\ell$ basis vectors, then 
  \[
    h^{-1}((V,W))
    =
    \left\{
      \rho \in R_\del(Q)
      \ \middle|\ 
      \rho_d = 
      \left(
        \begin{array}{c|c}
          0 & 0 \\[.2em]\hline\rule{0pt}{1em} 0 & g
        \end{array}
      \right)
      \text{ for some } g\in \GL(\ell)
    \right\}.
  \]
  Let $H = \mathsf{stab}((V,W))$ be the $\GL_\del$-stabiliser of the $(V,W)$, which is given by $(g_i)_i\in\GL_\del$ where $g_0$ is block lower-triangular and $g_1$ is block upper-triangular.
  Since the Grassmanians are homogeneous spaces, the inclusion of $(V,W)$ descends to an isomorphism $\bB H \xrightarrow{\sim} (\Gr(\delta_0,\ell) \times \Gr(\delta_1,\delta_1-\ell))/\GL_\del$, and pulling this isomorphism back along $h$ yields an isomorphism
  \[
    \left[h^{-1}((V,W))/H\right] \xrightarrow{\ \sim\ } \cS_{\del,\ell}.
  \]
  Finally, we consider the map $h^{-1}((V,W)) \to \GL(\ell)$ mapping $\rho$ to the block-matrix $g$ appearing in $\rho_d$, which becomes $H$-equivariant for the obvious action on the block matrix.
  The fibre over the identity is $F_{\del,\ell} \subset h^{-1}((V,W))$ and the stabiliser is the subgroup $P_{\del,\ell}\subset H$.
  A similar argument then yields that the inclusions induce an isomorphism
  $\left[F_{\del,\ell}/P_{\del,\ell}\right] \xrightarrow{\sim} \left[h^{-1}((V,W))/H\right]$.
\end{proof}

Because the space $F_{\del,\ell}$ is contractible, the cohomology of the strata can again be expressed as a product of equivariant cohomology of a point for different groups.
In particular, we have the following.

\begin{cor}\label{cor:evencohom}
  The cohomology $\rH^\bullet(\cS_{\del,\ell})$ and homology $\rH^\BM_\bullet(\cS_{\del,\ell})$ are concentrated in even degrees.
\end{cor}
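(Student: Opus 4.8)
The plan is to identify $\rH^\bullet(\cS_{\del,\ell})$ with the equivariant cohomology of a point for a product of general linear groups. By \Cref{prop:stratumquot}, $\cS_{\del,\ell}\cong[F_{\del,\ell}/P_{\del,\ell}]$, and since $F_{\del,\ell}$ is contractible the collapse map $F_{\del,\ell}\to\pt$ induces, exactly as in the remark preceding the statement, isomorphisms
\[
  \rH^\bullet(\cS_{\del,\ell})\;\cong\;\rH^\bullet_{P_{\del,\ell}}(F_{\del,\ell})\;\cong\;\rH^\bullet_{P_{\del,\ell}}(\pt).
\]
It therefore suffices to show $\rH^\bullet_{P_{\del,\ell}}(\pt,\bbQ)$, and its Borel--Moore counterpart, are concentrated in even degrees.

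I would next unwind the group $P_{\del,\ell}$. From \Cref{prop:stratumquot} it is a direct product $P_{\del,\ell}=\bigl(\prod_{i\in Q_0\setminus\{0,1\}}\GL(\del_i)\bigr)\times P'$, where $P'\subset\GL(\del_0)\times\GL(\del_1)$ consists of the indicated pairs of block matrices; projecting $P'$ onto its diagonal blocks $(a,c,d)$ presents it as a semidirect product $L\ltimes U$ with reductive Levi factor $L=\GL(\del_0-\ell)\times\GL(\ell)\times\GL(\del_1-\ell)$ and unipotent radical $U$ (the off-diagonal blocks $b,e$), the latter an affine space. The unipotent radical contributes trivially to rational equivariant cohomology --- being an iterated extension of additive groups with $\rH^{>0}_{\bbG_a}(\pt,\bbQ)=0$ --- so $\rH^\bullet_{P_{\del,\ell}}(\pt)\cong\rH^\bullet_{L'}(\pt)$ for the reductive group $L'=\bigl(\prod_{i\neq 0,1}\GL(\del_i)\bigr)\times L$. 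By the Künneth theorem this is a finite tensor product of copies of $\rH^\bullet_{\GL(n)}(\pt,\bbQ)=\bbQ[c_1,\dots,c_n]$ with $\deg c_i=2i$, each concentrated in even degrees; hence so is $\rH^\bullet(\cS_{\del,\ell})$.

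Finally, for Borel--Moore homology, $\cS_{\del,\ell}=[F_{\del,\ell}/P_{\del,\ell}]$ is a smooth Artin stack, being the quotient of a smooth variety by an algebraic group, so the Poincaré duality for smooth quotient stacks used in the preliminaries gives $\rH^\BM_j(\cS_{\del,\ell})\cong\rH^{\,2\dim\cS_{\del,\ell}-j}(\cS_{\del,\ell})$; since the shift $2\dim\cS_{\del,\ell}$ is even, evenness transfers from the cohomology to the Borel--Moore homology. Granting \Cref{prop:stratumquot}, this argument has essentially no obstacle: the only point deserving care is the passage to the reductive Levi quotient $L'$ (equivalently, the identity $\rH^{>0}_{\bbG_a}(\pt,\bbQ)=0$, which is a genuine feature of the algebraic classifying space), after which everything reduces to the elementary computation $\rH^\bullet_{\GL(n)}(\pt,\bbQ)=\bbQ[c_1,\dots,c_n]$.
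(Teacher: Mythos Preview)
Your proof is correct and follows exactly the approach the paper indicates in the sentence preceding the corollary: reduce to $\rH^\bullet_{P_{\del,\ell}}(\pt)$, pass to the Levi subgroup (a product of general linear groups), and invoke the standard even-degree computation for $\rH^\bullet_{\GL(n)}(\pt,\bbQ)$. The paper does not spell out the Levi decomposition or the Poincar\'e duality step, so your write-up is simply a more detailed version of the same argument.
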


The closure ${\overline\cS}_{\del,\ell} = \left[\overline S_{\del,\ell}/\GL_\del\right]$ of each stacky stratum is a closed (possibly singular) substack of $\cM_\del(Q)$, and these substacks give a filtration
$\overline \cS_{\del,0} \subset \ldots \subset \overline \cS_{\del,\min(\del_0,\del_1)} = \cM_\del(Q)$.
At the level of Borel--Moore homology we obtain a system of maps in each cohomological degree
\begin{equation}\label{eq:BMsequence}
  \rH^\BM_\bullet({\overline\cS}_{\del,0}) \longrightarrow \rH^\BM_\bullet({\overline\cS}_{\del,1}) \longrightarrow \ldots \longrightarrow \rH^\BM_\bullet({\overline\cS}_{\del,\min(\del_0,\del_1)}) = \rH^\BM_\bullet(\cM_\del(Q)).
\end{equation}
The following result shows that this defines an ascending filtration on cohomology of $\cM_\del(Q)$.

\begin{prop}\label{prop:varphiiso}
  For all $\del \in \N Q_0$ the maps in \eqref{eq:BMsequence} are injective, yielding a filtration with subquotients
  \[
    \rH^\BM_\bullet({\overline\cS}_{\del,\ell})/\rH^\BM_\bullet({\overline\cS}_{\del,\ell-1}) \cong \rH^\BM_\bullet(\cS_{\del,\ell}),
  \]
  where the isomorphism is induced by the restriction to $\cS_{\del,\ell} \subset \overline \cS_{\del,\ell}$.
\end{prop}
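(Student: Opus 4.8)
The plan is to run the localisation long exact sequence in ($\GL_\del$-equivariant) Borel--Moore homology associated to the closed substack $\overline\cS_{\del,\ell-1}\hookrightarrow\overline\cS_{\del,\ell}$ with open complement $\cS_{\del,\ell}$, and then to split it into short exact sequences using the parity vanishing of \Cref{cor:evencohom}. Concretely, for each $\ell$ the closed immersion $\overline S_{\del,\ell-1}\hookrightarrow\overline S_{\del,\ell}$ of $\GL_\del$-varieties, with $\GL_\del$-stable open complement $S_{\del,\ell}$, yields a long exact sequence
\[
  \cdots \to \rH^\BM_n(\overline\cS_{\del,\ell-1}) \xrightarrow{\ \iota_*\ } \rH^\BM_n(\overline\cS_{\del,\ell}) \xrightarrow{\ j^*\ } \rH^\BM_n(\cS_{\del,\ell}) \xrightarrow{\ \partial\ } \rH^\BM_{n-1}(\overline\cS_{\del,\ell-1}) \to \cdots,
\]
in which $\iota_*$ is the proper pushforward along the closed immersion (this is the map appearing in \eqref{eq:BMsequence}) and $j^*$ is the open restriction to $\cS_{\del,\ell}\subset\overline\cS_{\del,\ell}$.

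I would proceed by induction on $\ell$. The base case $\ell=0$ is immediate, since $\overline\cS_{\del,0}=\cS_{\del,0}$ is a single stratum and so has Borel--Moore homology concentrated in even degrees by \Cref{cor:evencohom}. For the inductive step, assume $\rH^\BM_\bullet(\overline\cS_{\del,\ell-1})$ is concentrated in even degrees. Since $\rH^\BM_\bullet(\cS_{\del,\ell})$ is also concentrated in even degrees by \Cref{cor:evencohom}, the connecting map $\partial\colon \rH^\BM_n(\cS_{\del,\ell})\to\rH^\BM_{n-1}(\overline\cS_{\del,\ell-1})$ vanishes for every $n$: if $n$ is odd its source is zero, and if $n$ is even its target lies in odd degree $n-1$ and is zero by the inductive hypothesis. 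Hence the long exact sequence breaks into short exact sequences
\[
  0 \to \rH^\BM_n(\overline\cS_{\del,\ell-1}) \xrightarrow{\ \iota_*\ } \rH^\BM_n(\overline\cS_{\del,\ell}) \xrightarrow{\ j^*\ } \rH^\BM_n(\cS_{\del,\ell}) \to 0,
\]
which gives the injectivity of the maps in \eqref{eq:BMsequence} and, via $j^*$, the claimed identification of the subquotient $\rH^\BM_\bullet(\overline\cS_{\del,\ell})/\rH^\BM_\bullet(\overline\cS_{\del,\ell-1})$ with $\rH^\BM_\bullet(\cS_{\del,\ell})$. As $\rH^\BM_\bullet(\overline\cS_{\del,\ell})$ is then squeezed between two spaces concentrated in even degrees, it is itself concentrated in even degrees, which closes the induction.

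The only point requiring care — and it is a very mild obstacle — is that one works with quotient stacks rather than ordinary varieties, so the localisation sequence should be invoked as the equivariant excision sequence for the $\GL_\del$-stable closed subvariety $\overline S_{\del,\ell-1}\subseteq\overline S_{\del,\ell}$. This is standard via the finite-dimensional approximations of equivariant (Borel--Moore) homology, for which the excision sequence is functorial, and under these identifications the equivariant pushforward along $\overline S_{\del,\ell-1}\hookrightarrow\overline S_{\del,\ell}$ is precisely the map $\rH^\BM_\bullet(\overline\cS_{\del,\ell-1})\to\rH^\BM_\bullet(\overline\cS_{\del,\ell})$ of \eqref{eq:BMsequence}. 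Once this is set up, the argument is a purely formal parity computation.
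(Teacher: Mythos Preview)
Your proof is correct and follows essentially the same approach as the paper: both use the localisation long exact sequence for $\overline\cS_{\del,\ell-1}\hookrightarrow\overline\cS_{\del,\ell}$ with open complement $\cS_{\del,\ell}$, and run an induction on $\ell$ using the even-degree concentration from \Cref{cor:evencohom} to kill the connecting maps. Your additional remark about working equivariantly via finite-dimensional approximations is a reasonable clarification but not a departure from the paper's argument.
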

\begin{proof}
  For each $\ell$ the inclusion ${\overline\cS}_{\del,\ell} \hookrightarrow {\overline\cS}_{\del,\ell+1}$ has open complement $\cS_{\del,\ell+1}$, which yields a long exact sequence in Borel--Moore homology
  \begin{equation}\label{eq:longexactZZS}
    \ldots \to \rH^\BM_\bullet({\overline\cS}_{\del,\ell-1}) \to \rH^\BM_\bullet({\overline\cS}_{\del,\ell}) \to \rH^\BM_\bullet(\cS_{\del,\ell}) \to \ldots.
  \end{equation}
  We claim that this implies that each ${\overline\cS}_{\del,\ell}$ has homology concentrated in even degrees.
  This follows by induction: for $\ell=0$ we have ${\overline\cS}_{\del,0} = \cS_{\del,0}$, for which the result is \cref{cor:evencohom}, and if the claim is true for $\ell-1$ then the exact sequence \label{eq:longexactZZ} specialises to
  \[
    0 = \rH^\BM_{2n+1}({\overline\cS}_{\del,\ell-1}) \to \rH^\BM_{2n+1}({\overline\cS}_{\del,\ell}) \to \rH^\BM_{2n+1}(\cS_{\del,\ell}) = 0,
  \]
  for every odd degree $2n+1$, where the vanishing of $\rH^\BM_{2n+1}(\cS_{\del,\ell})$ again follows by \ref{cor:evencohom}.
  Likewise, in every even degree we find a short exact sequence
  \[
    0 \to \rH^\BM_{2n}({\overline\cS}_{\del,\ell-1}) \to \rH^\BM_{2n}({\overline\cS}_{\del,\ell}) \to \rH^\BM_{2n}(\cS_{\del,\ell}) \to 0,
  \]
  which shows the result for $\ell$.
\end{proof}

\subsection{Relating stratification and unlinking}\label{sec:unlinkstrat}

We now describe a relation between the stratification and the moduli spaces of $Q^\UU$.
For convenience, we define $\overline u\colon \N Q_0^\UU \to \N Q_0 \times \N$ via
\[
  \overline u(\eps) \colonequals (u(\eps),\eps_\star).
\]
Then the relation between the stratification and the unlinked quiver is given by the following map.

\begin{prop}\label{prop:unlinkstrat}
  For each $\eps \in \N Q_0^\UU$ there exists a $\GL_\eps$-equivariant homotopy equivalence
  \begin{equation}\label{eq:unlinkhomotopy}
    \uppi \colon R_\eps(Q^\UU) \to F_{\overline u(\eps)}.
  \end{equation}
\end{prop}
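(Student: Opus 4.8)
The plan is to construct $\uppi$ explicitly by unpacking what a $Q^\UU$-representation amounts to, and matching it block-by-block against the definition of $F_{\overline u(\eps)}$. Write $\del = u(\eps)$ and $\ell = \eps_\star$, so that $\del_0 = \eps_0 + \ell$ and $\del_1 = \eps_1 + \ell$. A point of $F_{\del,\ell}$ is a tuple $(\rho_a)_{a\in Q_1}$ with $\rho_d$ pinned to the block form in \eqref{eq:dblock}; the remaining data is an arbitrary collection of matrices of the prescribed sizes. I would first observe that for every arrow $a\colon i\to j$ of $Q$ with $i,j\in\{0,1\}$, writing $\C^{\del_0} = \C^{\eps_0}\oplus\C^\ell$ and $\C^{\del_1}=\C^{\eps_1}\oplus\C^\ell$ (using the block decomposition that makes $\rho_d$ standard), the matrix $\rho_a$ decomposes into four blocks, which are in bijection with the four arrows $a, a^\star, a_\star, a^\star_\star$ that \Cref{def:unl} assigns to $a$; similarly arrows into or out of the pair $\{0,1\}$ split into two blocks matching the two arrows assigned to them, and arrows away from $\{0,1\}$ are untouched. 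The arrow $d$ is the one exception: its only surviving piece after pinning is the $\C^\ell\to\C^\ell$ identity block, contributing nothing free — consistent with $d$ contributing no arrow to $Q^\UU$ — while $c\colon 0\to 1$ contributes only its $\C^\ell\to\C^\ell$ block, which is exactly the single arrow $c^\star_\star\colon\star\to\star$. This bijection of blocks with arrows of $Q^\UU$ is precisely an isomorphism of affine spaces $R_\eps(Q^\UU)\xrightarrow{\sim} F_{\del,\ell}'$, where $F'_{\del,\ell}$ is the "slice" in which we additionally drop (i.e.\ remember as redundant) the blocks of $\rho_c$ and $\rho_d$ other than those just named.

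**The next step** is to compare $F'_{\del,\ell}$ with the genuine $F_{\del,\ell}$. The difference is exactly the blocks of $\rho_c$ and $\rho_d$ that were discarded: the $\rho_d$-blocks other than the identity corner are forced to vanish by the defining condition of $F_{\del,\ell}$, so they contribute nothing; the $\rho_c$-blocks other than the $\C^\ell\to\C^\ell$ corner are genuinely free in $F_{\del,\ell}$ but absent from $R_\eps(Q^\UU)$. Thus $F_{\del,\ell}$ is a trivial affine bundle over the image of $R_\eps(Q^\UU)$, with fibre a vector space $\C^N$ recording these extra $\rho_c$-blocks, and I take $\uppi$ to be (the inverse of) this bundle projection — equivalently, the inclusion of the zero section followed by the identification above. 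A trivial vector bundle projection is a homotopy equivalence, which gives the claimed homotopy equivalence \eqref{eq:unlinkhomotopy}. It remains to check $\GL_\eps$-equivariance: here $\GL_\eps = \GL(\eps_0)\times\GL(\eps_1)\times\prod_{i\neq 0,1}\GL(\eps_i)\times\GL(\ell)$ embeds into $P_{\del,\ell}$ as the Levi block-diagonal subgroup (the factor $\GL(\ell)$ landing diagonally as the "$c$" blocks of $g_0$ and $g_1$ in the notation of \Cref{prop:stratumquot}), and one checks the base-change action of this Levi on the block decomposition of the $\rho_a$ matches, arrow by arrow, the base-change action of $\GL_\eps$ on $R_\eps(Q^\UU)$; since $\uppi$ is the zero-section of the equivariant bundle it is automatically equivariant.

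**The main obstacle** I expect is purely bookkeeping: verifying that the block decomposition of every arrow-matrix of $Q$ corresponds correctly — with the right source, target, and matrix dimensions — to the arrows of $Q^\UU$ listed in \Cref{def:unl}, and in particular getting the asymmetric cases ($i\in\{0,1\}$, $j\notin\{0,1\}$ versus the reverse) and the special roles of $c$ and $d$ exactly right, including the bijection between the $\eps$-sized blocks and the vertices $0,1,\star$. Once the dimension count is organized — which is essentially the content already encoded in the adjacency-matrix formula for $C^\UU_{ij}$ and the Euler-form lemma preceding this subsection — the homotopy-equivalence and equivariance statements follow formally. I would present the argument by fixing the block decomposition once, tabulating the correspondence in the five cases of \Cref{def:unl}, and then stating that $\uppi$ is the zero section of the resulting trivial $\GL_\eps$-equivariant affine bundle $F_{\overline u(\eps)}\to R_\eps(Q^\UU)$.
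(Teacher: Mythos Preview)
Your proposal is correct and follows essentially the same approach as the paper: construct $\uppi$ by sending a $Q^\UU$-representation to the $Q$-representation whose arrow matrices are the block matrices built from the arrows $a,a^\star,a_\star,a^\star_\star$ (with $\uppi(\rho)_d$ the pinned identity block and $\uppi(\rho)_c$ having only the $c^\star_\star$ corner nonzero), observe that $F_{\overline u(\eps)}$ differs from the image only by the three free blocks of $\rho_c$ (an affine-bundle/linear-retraction), and check $\GL_\eps$-equivariance via the block-diagonal Levi embedding into $P_{\del,\ell}$. The paper's proof is exactly this, phrased as a closed embedding with an explicit retraction rather than as the zero section of a trivial bundle.
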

\begin{proof}
  Let $\eps\in\N Q_0^\UU$ and consider a representation $\rho\in R_\eps(Q^\UU)$.
  Then we define the $Q$-representation $\uppi(\rho)$ of dimension $u(\eps)$ as having the following values on arrows $a\colon i\to j$ in $Q_0$:
  \begin{itemize}
  \item if $i,j \not\in \{0,1\}$ then $\uppi(\rho)_a = \rho_a$,
  \item if $i\neq 0,1$ and $j\in \{0,1\}$ then it is the block matrix
    \[
      \uppi(\rho)_a = \left(\begin{array}{c} \rho_a \\[.2em]\hline\rule{0pt}{1em}  \rho_{a_\star} \end{array}\right),
    \]
  \item if $i\in \{0,1\}$ and $j\neq 0,1$ then it is the block matrix
    \[
      \uppi(\rho)_a = \left(\begin{array}{c|c} \rho_a &  \rho_{a^\star} \end{array}\right),
    \]
  \item if $i,j\in\{0,1\}$ and $a\not\in\{c,d\}$ then it is the block matrix
  \[
    \uppi(\rho)_a = \left(\begin{array}{c|c} \rho_a &  \rho_{a^\star}\\[.2em]\hline\rule{0pt}{1em}  \rho_{a_\star} &  \rho_{a_\star^\star}\end{array}\right),
  \]
\item finally for the arrows $a=c,d$ we set
  \begin{equation}\label{eq:alphablockdecomp}
    \uppi(\rho)_c = \left(
      \begin{array}{c|c}
        0 & 0 \\[.2em]\hline\rule{0pt}{1em} 0 & \rho_{c^\star_\star}
      \end{array}
    \right),\quad
    \uppi(\rho)_d = \left(
      \begin{array}{c|c}
        0 & 0 \\[.2em]\hline\rule{0pt}{1em} 0 & I_{\eps_\star\times\eps_\star}
      \end{array}
    \right)
  \end{equation}
\end{itemize}
It is clear that this defines a closed embedding, and $F_{\overline u(\eps)}$ retracts on the image: any representation in $F_{\overline u(\eps)}$ retracts onto a one of the form $\pi(\rho)$ by shrinking the blocks of $c$ which are $0$ in \eqref{eq:alphablockdecomp}.
Moreover, this map is $\GL_\eps$-equivariant, with respect to its action on $F_{\overline u(\eps)}$ via the embedding $\GL_\eps \hookrightarrow P_{\overline u(\eps)}$ mapping $(h_i)_{i\in Q_0^\UU}$ to the element $(g_i)_{j\in Q_0}$ with $g_i = h_i$ for all $i\neq 0,1$ and
\[
  g_0 =
  \left(
    \begin{array}{c|c}
      h_0 & 0 \\[.2em]\hline\rule{0pt}{1em} 0 & h_\star
    \end{array}
  \right),
  \quad
  g_1 = 
  \left(
    \begin{array}{c|c}
      h_1 & 0 \\[.2em]\hline\rule{0pt}{1em} 0 & h_\star
    \end{array}
  \right).\qedhere
\]
\end{proof}

\begin{cor}\label{cor:isostratunl}
  For every $\eps\in \N Q_0^\UU$ the map $\uppi$ induces an isomorphism
  \[
    \rH^\bullet(\cM_\eps(Q^\UU))\cong \rH^\bullet(\cS_{\overline u(\eps)}).
  \]
\end{cor}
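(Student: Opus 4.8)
The plan is to factor the asserted isomorphism through the quotient presentation of the stratum provided by \Cref{prop:stratumquot}. Write $\del = u(\eps)$ and $\ell = \eps_\star$, so that $\overline u(\eps) = (\del,\ell)$ and $F_{\overline u(\eps)} = F_{\del,\ell}$, $\cS_{\overline u(\eps)} = \cS_{\del,\ell}$. Since $\uppi$ is $\GL_\eps$-equivariant it descends to a morphism of quotient stacks
\[
  [\uppi]\colon \cM_\eps(Q^\UU) = [R_\eps(Q^\UU)/\GL_\eps] \longrightarrow [F_{\del,\ell}/\GL_\eps],
\]
where $\GL_\eps$ acts on $F_{\del,\ell}$ through the embedding $\GL_\eps \hookrightarrow P_{\del,\ell}$ exhibited in the proof of \Cref{prop:unlinkstrat}. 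First I would invoke the standard fact that a group-equivariant homotopy equivalence induces an isomorphism on equivariant cohomology: the map $[\uppi]$ corresponds to a map of Borel constructions, which is a map of fibrations over $\bB\GL_\eps$ restricting on fibres to the homotopy equivalence $R_\eps(Q^\UU)\to F_{\del,\ell}$ of \Cref{prop:unlinkstrat} (in fact the proof there produces a $\GL_\eps$-equivariant deformation retraction onto $\im\uppi$), hence is a weak equivalence. Thus $[\uppi]^*$ is an isomorphism $\rH^\bullet([F_{\del,\ell}/\GL_\eps]) \xrightarrow{\ \sim\ } \rH^\bullet(\cM_\eps(Q^\UU))$.

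Next I would compare the two quotients $[F_{\del,\ell}/\GL_\eps]$ and $[F_{\del,\ell}/P_{\del,\ell}]$, the latter being identified with $\cS_{\del,\ell}$ by \Cref{prop:stratumquot}. Reading off the explicit block description of $P_{\del,\ell}$, the image of $\GL_\eps$ is precisely the block-diagonal subgroup (the parameters $b$ and $e$ set to zero), which is a Levi subgroup: $P_{\del,\ell} = \GL_\eps \ltimes U$, where $U$ is the unipotent radical obtained by setting all diagonal blocks equal to the identity and all the factors at vertices $i\neq 0,1$ equal to the identity, and letting $b\in\Mat(\del_0-\ell,\ell)$, $e\in\Mat(\ell,\del_1-\ell)$ vary. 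Since $U$ is isomorphic as a variety to an affine space, it is contractible, so $P_{\del,\ell}$ deformation retracts onto $\GL_\eps$; consequently the natural map $[F_{\del,\ell}/\GL_\eps]\to[F_{\del,\ell}/P_{\del,\ell}]$ is a fibre bundle with contractible fibre $P_{\del,\ell}/\GL_\eps\cong U$ and hence induces an isomorphism in cohomology. Composing with $[\uppi]^*$ and with the isomorphism of \Cref{prop:stratumquot} yields $\rH^\bullet(\cS_{\overline u(\eps)}) \cong \rH^\bullet(\cM_\eps(Q^\UU))$, and the identical chain of maps works verbatim for Borel--Moore homology (which will be needed later).

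The only step requiring genuine care is the Levi comparison in the second paragraph — that enlarging the structure group from $\GL_\eps$ to the full group $P_{\del,\ell}$ does not change cohomology. This is a routine consequence of the contractibility of the unipotent radical; the one thing to check is that $U$ really is a normal subgroup isomorphic to an affine space, which is immediate from the block form of $P_{\del,\ell}$ in \Cref{prop:stratumquot}: the off-diagonal parameters $b$ and $e$ sit in different matrices ($g_0$ and $g_1$), do not interact, and add under multiplication, so $U \cong \Mat(\del_0-\ell,\ell)\times\Mat(\ell,\del_1-\ell)$ as a variety. Everything else is formal given \Cref{prop:unlinkstrat,prop:stratumquot}.
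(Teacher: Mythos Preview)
Your proposal is correct and follows essentially the same route as the paper: both use the $\GL_\eps$-equivariant homotopy equivalence $\uppi$ from \Cref{prop:unlinkstrat} together with the contractibility of $P_{\overline u(\eps)}/\GL_\eps \cong \bbA^{(\eps_0+\eps_1)\eps_\star}$ to pass from $\GL_\eps$- to $P_{\overline u(\eps)}$-equivariant cohomology, and then invoke \Cref{prop:stratumquot}. Your version is simply more explicit about the Levi/unipotent-radical structure of $P_{\del,\ell}$, but the argument is the same.
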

\begin{proof}
  The map $\uppi\colon R_\eps(Q^\UU) \hookrightarrow F_{\overline u(\eps)}$ is an equivariant homotopy equivalence, along the inclusion of groups $\GL_\eps \hookrightarrow P_{\overline u(\eps)}$.
  Since the quotient $P_{\overline u(\eps)}/\GL_\eps \cong \bbA^{(\eps_0+\eps_1)\eps_\star}$ is contractible, this inclusion is likewise a homotopy equivalence, which yields the isomorphism
  \[
    \rH^\bullet(\cM_\eps(Q^\UU))
    = \rH^\bullet_{\GL_\eps}(R_\eps(Q^\UU))
    \cong \rH^\bullet_{P_{\overline u(\eps)}}(F_{\overline u(\eps)})
    = \rH^\bullet(\cS_{\overline u(\eps)}).
    \qedhere
  \]
\end{proof}

Combining the above isomorphism with the filtration in \cref{prop:varphiiso}, we obtain the following.

\begin{thm}\label{thm:gradeddecomp}
  For each $\del\in \N Q_0$ the filtration induces a graded decomposition
  \[
    \rH^\BM_{\bullet-\chi_Q(\del,\del)}(\cM_\del(Q)) \cong \bigoplus_{u(\eps) = \del} \rH^\BM_{\bullet-\chi_{Q^\UU}(\eps,\eps)}(\cM_\eps(Q^\UU)).
  \]
\end{thm}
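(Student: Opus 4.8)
The plan is to deduce the decomposition formally from the filtration of \Cref{prop:varphiiso} and the identification of strata in \Cref{cor:isostratunl}, with the only real work being the bookkeeping of normalisation shifts. First I would record the combinatorics of the index set: for $\del \in \N Q_0$ the stratification of $\cM_\del(Q)$ runs over $0 \le \ell \le \min(\del_0,\del_1)$, and for each such $\ell$ there is a unique $\eps \in \N Q_0^\UU$ with $\overline u(\eps) = (\del,\ell)$, namely $\eps_\star = \ell$, $\eps_0 = \del_0-\ell$, $\eps_1 = \del_1-\ell$, and $\eps_i = \del_i$ for $i\neq 0,1$; conversely every $\eps$ with $u(\eps)=\del$ arises this way, since $u(\eps)_0 = \eps_0+\eps_\star$ and $u(\eps)_1 = \eps_1+\eps_\star$ force $\eps_\star \le \min(\del_0,\del_1)$. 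So the strata $\cS_{\del,\ell}$ are indexed precisely by the fibre $u^{-1}(\del)$, with $\cS_{\del,\ell} = \cS_{\overline u(\eps)}$.

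Next I would split the filtration. By \Cref{prop:varphiiso} the maps $\rH^\BM_\bullet(\overline\cS_{\del,\ell-1}) \to \rH^\BM_\bullet(\overline\cS_{\del,\ell})$ are injective with cokernel $\rH^\BM_\bullet(\cS_{\del,\ell})$, and by \Cref{cor:evencohom} (together with the inductive argument already used in the proof of \Cref{prop:varphiiso}) every term $\rH^\BM_\bullet(\cS_{\del,\ell})$ and $\rH^\BM_\bullet(\overline\cS_{\del,\ell})$ is concentrated in even degrees. Hence all the short exact sequences are sequences of graded vector spaces that split (non-canonically), and composing the splittings yields a graded isomorphism
\[
  \rH^\BM_\bullet(\cM_\del(Q)) \;\cong\; \bigoplus_{\ell=0}^{\min(\del_0,\del_1)} \rH^\BM_\bullet(\cS_{\del,\ell}).
\]

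Finally I would match the summands with the right shift. \Cref{cor:isostratunl} gives a degree-preserving isomorphism $\rH^\bullet(\cM_\eps(Q^\UU)) \cong \rH^\bullet(\cS_{\overline u(\eps)})$, both sides being equivariant cohomologies of contractible spaces. Both stacks are smooth, so Poincaré duality turns this into an isomorphism of Borel--Moore homologies shifted by twice the difference of dimensions; by the codimension formula of \Cref{prop:stratumquot} one computes $\dim\cS_{\del,\ell} - \dim\cM_\eps(Q^\UU) = (\del_0-\ell)(\del_1-\ell) = \eps_0\eps_1$, giving $\rH^\BM_\bullet(\cS_{\del,\ell}) \cong \rH^\BM_{\bullet - 2\eps_0\eps_1}(\cM_\eps(Q^\UU))$. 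Feeding this into the split filtration and invoking the Euler-form identity $\chi_{Q^\UU}(\eps,\eps) - \chi_Q(\del,\del) = 2\eps_0\eps_1$ (with $\del = u(\eps)$) to absorb the shift into the normalisation, one reads off exactly the claimed decomposition $\rH^\BM_{\bullet-\chi_Q(\del,\del)}(\cM_\del(Q)) \cong \bigoplus_{u(\eps)=\del}\rH^\BM_{\bullet-\chi_{Q^\UU}(\eps,\eps)}(\cM_\eps(Q^\UU))$.

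Everything here is essentially formal once \Cref{prop:varphiiso} and \Cref{cor:isostratunl} are available; the one point that needs genuine care — and which I expect to be the main (minor) obstacle — is the shift bookkeeping: checking that moving between cohomology and Borel--Moore homology on the strata, which are \emph{not} equidimensional in $\cM_\del(Q)$, produces precisely the shift $2\eps_0\eps_1$ predicted by the Euler-form lemma, so that the two normalisations in the statement agree. It is also worth flagging explicitly that the resulting decomposition, while grading-preserving, is not canonical, since it depends on the chosen splittings of the filtration.
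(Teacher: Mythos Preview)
Your proposal is correct and follows essentially the same route as the paper: split the filtration of \Cref{prop:varphiiso} into strata, re-index by $\eps$ via the bijection $\overline u$, then use smoothness to pass between cohomology and Borel--Moore homology and invoke \Cref{cor:isostratunl} together with the Euler-form lemma to match the normalisation shifts. Your packaging of the shift as the single dimension difference $\dim\cS_{\del,\ell}-\dim\cM_\eps(Q^\UU)=\eps_0\eps_1$ is equivalent to (and slightly tidier than) the paper's explicit computation of $2r_\eps$, and your remark on non-canonicity of the splitting is a fair caveat the paper leaves implicit.
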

\begin{proof}
  The filtration in \cref{prop:varphiiso} yields a decomposition of the Borel--Moore homology
  \begin{equation}\label{eq:BMdecomp}
    \rH^\BM_{\bullet-\chi_Q(\del,\del)}(\cM_\del(Q)) \cong \bigoplus_{\ell \leq \min(\del_0,\del_1)} \rH^\BM_{\bullet-\chi_Q(\del,\del)}(\cS_{\del,\ell})
    = \bigoplus_{u(\eps) = \del} \rH^\BM_{\bullet-\chi_Q(u(\del),u(\del))}(\cS_{\overline u(\eps)}),
  \end{equation}
  where for the second equality we note that each pair $(\del,\ell)$ with $\ell\leq \min(\del_0,\del_1)$ can be written as $\overline u(\eps) = (u(\eps),\eps_\star)$ for the dimension vector $\eps$ with
  \[
    \eps_0 = \del_0 - \ell,\quad \eps_1 = \del_1 - \ell,\quad \eps_\star = \ell,
  \]
  and $\eps_i = \del_i$ for all other vertices.
  Since $\overline\cS_{\overline u(\eps)}$ is a smooth stack, we can dualise to obtain
  \begin{equation}\label{eq:shiftedBMtocoho}
    \rH^\BM_{\bullet-\chi_Q(u(\eps),u(\eps))}(\cS_{\overline u(\eps)}) \cong
    \rH^{-\bullet+\chi_Q(u(\eps),u(\eps))+2r_\eps}(\cS_{\overline u(\eps)}) \cong
    \rH^{-\bullet+\chi_Q(u(\eps),u(\eps))+2r_\eps}(\cM_\eps(Q^\UU)),
  \end{equation}
  where the second isomorphism follows from \cref{cor:isostratunl}.
  Using the expression for the codimension of $\cS_{\overline u(\eps)}$ from \cref{prop:stratumquot} it follows that
  \[
    \begin{aligned}
      2r_\eps
      &= -2\chi_Q(u(\eps),u(\eps)) - 2(\eps_0+\eps_\star)(\eps_1+\eps_\star) + 2(\eps_0+\eps_1+\eps_\star)\eps_\star \\
      &= -2\chi_Q(u(\eps),u(\eps)) - 2\eps_0\eps_1 \\
      &= -\chi_Q(u(\eps),u(\eps)) -\chi_{Q^\UU}(\eps,\eps).
    \end{aligned}
  \]
  In particular the shift in \eqref{eq:shiftedBMtocoho} is precisely $-\chi_{Q^\UU}(\eps,\eps)$.
  Since the moduli space $\cM_\eps(Q^\UU)$ is again smooth, we may dualise to rewrite \eqref{eq:BMdecomp} as
  \[
    \rH^\BM_{\bullet-\chi_Q(\del,\del)}(\cM_\del(Q))
    \cong
    \bigoplus_{u(\eps) = \del}
    \rH^{-\bullet-\chi_{Q^\UU}(\eps,\eps)}(\cM_\eps(Q^\UU))
    \cong
    \bigoplus_{u(\eps) = \del}
    \rH^\BM_{\bullet-\chi_{Q^\UU}(\eps,\eps)}(\cM_\eps(Q^\UU)).
    \qedhere
  \]
\end{proof}

\subsection{Filtration by ideals}\label{sec:filtideals}

We will explain how the decomposition in \cref{thm:gradeddecomp} interacts with the algebraic structure of the CoHA.
For every $p\in\N$ we consider the $\N Q_0\times \bbZ$-graded subspace
\[
  \cR_p \colonequals \bigoplus_{\del\in \N Q_0} \cR_{p,\del} \colonequals \bigoplus_{\del\in \N Q_0} \rH^\BM_{\bullet-\chi_Q(\del,\del)}(\overline \cS_{\del,\del_1-p}),
\]
where the homology of $\overline \cS_{\del,\del_0-p}$ sits inside the homology of $\cM_\del(Q)$ via the inclusion in
(\ref{eq:BMsequence}).
These subspaces define an exhaustive descending filtration 
\[
  \cH_Q = \cR_0 \supset \cR_1 \supset \ldots,
\]
which is finite in every graded component, since $\cR_{p,\del} = \rH^\BM_{\bullet-\chi_Q(\del,\del)}(\overline\cS_{\del,\del_1-p})$ vanishes when $p > \del_1$.
We claim that each $\cR_p$ is a CoHA right ideal.
To see this, observe that 
\[
  \rho\in\overline \cS_{\del,\del_1-p}
  \quad\Longleftrightarrow\quad
  \rk \rho_d \leq \del_1 - p
  \quad\Longleftrightarrow\quad
  \dim \ker \rho_d \geq p,
\]
and that the condition on the right hand side is preserved when taking an extension by an arbitrary representation on the right.
More formally, we have the following.

\begin{prop}\label{prop:CoHAideals}
  For every $p\in\bbZ$ the subspace $\cR_p$ is a graded right ideal of $\cH_Q$.
\end{prop}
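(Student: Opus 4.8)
The plan is to reduce the statement to a single support claim about the Hall product, and then to prove that claim from the elementary behaviour of the two-cycle under extensions. First observe that $\cR_p$ is automatically an $\N Q_0\times\bbZ$-graded subspace of $\cH_Q$: the inclusion $\cR_{p,\del}\hookrightarrow\cH_{Q,\del}$ is the proper pushforward along the closed immersion $\overline\cS_{\del,\del_1-p}\hookrightarrow\cM_\del(Q)$ of \Cref{prop:varphiiso}, and proper pushforward preserves Borel--Moore degree. Hence it suffices to show that for any dimension vectors $\del^{(1)},\del^{(3)}$, writing $\del^{(2)}=\del^{(1)}+\del^{(3)}$, the product $x\cdot y$ of a class $x\in\cR_{p,\del^{(1)}}$ with an arbitrary $y\in\cH_{Q,\del^{(3)}}$ lies in $\cR_{p,\del^{(2)}}$. (For $p\le 0$ one has $\cR_p=\cH_Q$, and for $p>\del_1$ one has $\cR_{p,\del}=0$, so these cases are vacuous.)

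The geometric input is the following. Given a short exact sequence $0\to\rho^{(1)}\xrightarrow{\ \alpha\ }\rho^{(2)}\to\rho^{(3)}\to 0$ of $Q$-representations of dimension vectors $\del^{(1)},\del^{(2)},\del^{(3)}$, naturality of $\alpha$ at the arrow $d\colon 1\to 0$ gives $\alpha_0\circ\rho^{(1)}_d=\rho^{(2)}_d\circ\alpha_1$; since $\alpha_1$ is injective it embeds $\ker\rho^{(1)}_d$ into $\ker\rho^{(2)}_d$, so $\rk\rho^{(2)}_d\le\rk\rho^{(1)}_d+\del^{(3)}_1$. In particular, if $\rho^{(1)}$ lies in the closed locus $\{\rk d\le\del^{(1)}_1-p\}$ then $\rho^{(2)}$ lies in the closed locus $\{\rk d\le\del^{(2)}_1-p\}$: extending by an arbitrary representation on the right can only enlarge $\ker\rho_d$.

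Next I would restrict the defining correspondence \eqref{eq:COHAcorr} to these loci. Write $\cE=\cE_{\del^{(1)},\del^{(3)}}$ with its smooth projection $p$ and its proper map $q$, and set $\cE'\colonequals p^{-1}\bigl(\overline\cS_{\del^{(1)},\del^{(1)}_1-p}\times\cM_{\del^{(3)}}(Q)\bigr)$, a closed substack of $\cE$. The observation above says that $q$ carries $\cE'$ into $\overline\cS_{\del^{(2)},\del^{(2)}_1-p}$, so $q|_{\cE'}=\iota_2\circ q'$ with $\iota_2$ the closed immersion into $\cM_{\del^{(2)}}(Q)$ and $q'$ proper; and $p|_{\cE'}$ fits, by construction of $\cE'$ as a preimage, into a cartesian square over the closed immersion $\iota_1\times\id$, where $\iota_1$ is the inclusion of $\overline\cS_{\del^{(1)},\del^{(1)}_1-p}$. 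Writing $x=\iota_{1,*}\widetilde x$ and using compatibility of the external product with proper pushforward, proper base change for the cartesian square, and functoriality of pushforward, one computes
\[
  x\cdot y \;=\; q_*\,p^*\bigl((\iota_1\times\id)_*(\widetilde x\boxtimes y)\bigr) \;=\; q_*\,\iota_*\,{p'}^*(\widetilde x\boxtimes y) \;=\; \iota_{2,*}\bigl(q'_*\,{p'}^*(\widetilde x\boxtimes y)\bigr),
\]
where $\iota\colon\cE'\hookrightarrow\cE$ and $p'\colon\cE'\to\overline\cS_{\del^{(1)},\del^{(1)}_1-p}\times\cM_{\del^{(3)}}(Q)$ are the evident restrictions, and $p^*$ is the smooth pullback on Borel--Moore homology dual to the cohomological map in \eqref{eq:COHAcorr}. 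Hence $x\cdot y$ lies in the image of $\iota_{2,*}$, which is exactly $\cR_{p,\del^{(2)}}$.

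The rank inequality is immediate, so the only point that needs care is the bookkeeping with the six-functor formalism: that $\cE'$ is the honest preimage, hence the relevant square is cartesian, and that the base-change identity $p^*(\iota_1\times\id)_*=\iota_*\,{p'}^*$ is valid for Borel--Moore homology of these quotient stacks. This follows from the standard compatibilities of smooth pullback with proper pushforward, and none of it requires $Q$ to be symmetric.
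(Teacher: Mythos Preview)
Your argument is correct and follows essentially the same route as the paper: restrict the Hall correspondence to the closed substack $\overline\cS_{\del^{(1)},\del^{(1)}_1-p}\times\cM_{\del^{(3)}}(Q)$, use the observation that $\ker\rho^{(1)}_d$ embeds into $\ker\rho^{(2)}_d$ (the paper phrases this via the block form of $\tau_d$) to see that $q$ lands in $\overline\cS_{\del^{(2)},\del^{(2)}_1-p}$, and then apply proper base change to conclude. The only cosmetic difference is that you spell out the gradedness and the six-functor bookkeeping a little more explicitly than the paper does.
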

\begin{proof}
  By construction $\cR_p$ is a graded subspace, so we show that it is a right ideal.
  For $\del,\del'\in\N Q_0$ consider the closed substack $\overline\cS_{\del,\del_1-p} \times \cM_{\del'}(Q) \subset \cM_{\del}(Q) \times \cM_{\del'}(Q)$ and the restriction
  \[
    \cE_{(\del,\del_1-p),\del'}  \colonequals p^{-1}(\overline\cS_{\del,\del_1-p} \times \cM_{\del'}(Q)),
  \]
  of the bundle of extensions in the correspondence \eqref{eq:COHAcorr}.
  Given an extension $\tau\in \cE_{(\del,\del_1-p),\del'}^\tau$ of a pair $(\rho,\rho') \in \overline\cS_{\del,\del_1-p} \times \cM_{\del'}(Q)$ we see that the matrix of $d$ has the form
  \[
    \tau_d = 
    \left(
      \begin{array}{c|c}
        \rho_d & * \\[.2em]\hline\rule{0pt}{1em} 0 & \rho_d'
      \end{array}
    \right).
  \]
  Since $\dim \ker \rho_d \geq p$ it is clear from the above block-form that $\dim \ker \tau_d \geq p$ and therefore $\tau$ maps to $q(\tau) \in \overline\cS_{\del+\del',\del_1+\del'_1-p} \subset \cM_{\del+\del'}(Q)$.
  As a result, we find a commutative square
  \[
    \begin{tikzcd}
      \overline\cS_{\del,\del_1-p} \times \cM_{\del'}(Q)  \ar[d,"i"] & \ar[l,swap,"p'"]
      \cE_{(\del,\del_1-p),\del'} \ar[r,"q'"]  \ar[d,"i"]&
      \overline\cS_{\del+\del',\del_1+\del'_1-p}  \ar[d,"i"]\\
      \cM_{\del}(Q) \times \cM_{\del'}(Q) & \ar[l,swap,"p"]
      \cE_{\del,\del'} \ar[r,"q"] &
      \cM_{\del+\del'}(Q)
    \end{tikzcd}
  \]
  where the left square is cartesian, and the vertical maps are closed immersions.
  Now given an element $\alpha \in \rH^\BM_\bullet(\overline\cS_{\del,\del_1-p}) = \cR_{p,\del}$ and any $\beta \in \rH^\BM_\bullet(\cM_{\del'})$ we can form
  \[
    \alpha\beta \in \rH^\BM_\bullet(\overline\cS_{\del,\del_1-p})\otimes\rH^\BM_\bullet(\cM_{\del'}) \cong \rH^\BM_\bullet(\overline\cS_{\del,\del_1-p}\times \cM_{\del'}).
  \]
  Then the CoHA product of $\alpha$ and $\beta$ is given by mapping $i_*\alpha\beta$ to 
  \[
    q_*p^*i_*(\alpha\beta) = q_*i_*(p')^*(\alpha\beta) = i_*q'_*(p')^*(\alpha\beta),
  \]
  which lies in $\rH^\BM_\bullet(\overline\cS_{\del+\del',\del_1+\del'_1-p}) \subset \cR_p$.
  It follows that $\cR_p$ is a right ideal.
\end{proof}

If $Q$ is a symmetric quiver, then it is well-known that the CoHA product can be twisted to become graded-commutative \cite{KS11}.
Onesided graded ideals in a graded-commutative are twosided ideals, and $\cR_p$ is therefore twosided ideals.

\begin{thm}\label{thm:filtbyideals}
  Let $Q$ be a (symmetric) quiver and $Q^\UU$ the unlinking at a distinguished twocycle.
  Then $\cH_Q$ has an exhaustive descending filtration by graded right (resp. twosided) ideals
  \[
    \cH_Q = \cR_0 \supset \cR_1 \supset \ldots,
  \]
  with an isomorphism of $\N Q_0\times \bbZ$-graded vector spaces
  \[
    \gra_\bullet \cH_Q \colonequals \bigoplus_{p\in\N} \cR_p/\cR_{p+1} \cong \cH_{Q^\UU}.
  \]
  In particular, this makes $\cH_{Q^\UU}$ into a graded right $\cH_Q$-module.
\end{thm}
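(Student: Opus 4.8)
The plan is to deduce the theorem by assembling results already in hand: \cref{prop:CoHAideals} supplies the ideal property of the $\cR_p$, \cref{prop:varphiiso} computes the subquotients of the filtration, and \cref{cor:isostratunl} together with the shift bookkeeping inside the proof of \cref{thm:gradeddecomp} matches those subquotients with the graded pieces of $\cH_{Q^\UU}$.

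First I would record the properties of the filtration. That $\cH_Q=\cR_0\supseteq\cR_1\supseteq\cdots$ is an exhaustive descending filtration, finite in each graded component, is already observed before the statement: it is exhaustive because $\rk\rho_d\leq\del_1$ forces $\overline\cS_{\del,\del_1}=\cM_\del(Q)$, so $\cR_0=\cH_Q$, and $\cR_{p,\del}$ vanishes for $p>\del_1$; moreover the maps in \eqref{eq:BMsequence} are injective by \cref{prop:varphiiso}, so the inclusions $\cR_{p+1}\subseteq\cR_p$ are genuine. That each $\cR_p$ is a graded right ideal is \cref{prop:CoHAideals}, and when $Q$ is symmetric the Kontsevich--Soibelman twist makes $\cH_Q$ graded-commutative, so a graded one-sided ideal is automatically two-sided.

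The main step is to identify $\gra_\bullet\cH_Q$ with $\cH_{Q^\UU}$. Fixing $\del\in\N Q_0$ and $p\in\N$ and writing $\ell\colonequals\del_1-p$, the $\del$-component of $\cR_p/\cR_{p+1}$ is $\rH^\BM_{\bullet-\chi_Q(\del,\del)}(\overline\cS_{\del,\ell})/\rH^\BM_{\bullet-\chi_Q(\del,\del)}(\overline\cS_{\del,\ell-1})$, which by \cref{prop:varphiiso} equals $\rH^\BM_{\bullet-\chi_Q(\del,\del)}(\cS_{\del,\ell})$ and is nonzero only for $0\leq\ell\leq\min(\del_0,\del_1)$. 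The assignment $\eps\mapsto(u(\eps),\eps_\star)$ is a bijection from $\N Q_0^\UU$ onto the set of such pairs $(\del,\ell)$, with inverse $\eps_0=\del_0-\ell$, $\eps_1=\del_1-\ell=p$, $\eps_\star=\ell$, $\eps_i=\del_i$ otherwise, so the filtration degree $p$ records $\eps_1$. Along this bijection \cref{cor:isostratunl} gives $\rH^\bullet(\cS_{\del,\ell})\cong\rH^\bullet(\cM_\eps(Q^\UU))$, and the computation $2r_\eps=-\chi_Q(u(\eps),u(\eps))-\chi_{Q^\UU}(\eps,\eps)$ from the proof of \cref{thm:gradeddecomp} shows that, after dualising both sides via smoothness, the cohomological shift is exactly the one normalising $\cH_{Q^\UU,\eps}$. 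Summing over $p$ and $\del$ then yields the graded isomorphism $\gra_\bullet\cH_Q\cong\cH_{Q^\UU}$; concretely this is nothing but the decomposition of \cref{thm:gradeddecomp} re-sorted according to the filtration degree $p$.

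Finally, since each $\cR_p$ is in particular a right $\cH_Q$-submodule of $\cH_Q$, every quotient $\cR_p/\cR_{p+1}$, hence $\gra_\bullet\cH_Q$, is a graded right $\cH_Q$-module, and transporting this structure along the isomorphism above makes $\cH_{Q^\UU}$ one. I expect the only subtle point to be the bookkeeping in the main step: verifying that re-sorting the abstract decomposition of \cref{thm:gradeddecomp} by the integer $p$ genuinely reproduces the subquotients of $\cR_\bullet$, and with the cohomological shift matching $\cH_{Q^\UU}$. This requires no new geometric input --- \cref{prop:varphiiso} and the shift computation inside \cref{thm:gradeddecomp} already contain everything --- but it does need to be done carefully.
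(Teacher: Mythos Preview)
Your proposal is correct and follows essentially the same route as the paper: invoke \cref{prop:CoHAideals} for the ideal property, use \cref{prop:varphiiso} to identify each subquotient with $\rH^\BM_{\bullet-\chi_Q(\del,\del)}(\cS_{\del,\ell})$, and then apply \cref{cor:isostratunl} together with the shift computation from \cref{thm:gradeddecomp} to match with $\cH_{Q^\UU}$. Your write-up is in fact slightly more explicit than the paper's (you spell out the bijection $(\del,\ell)\leftrightarrow\eps$ and note that $p=\eps_1$), but there is no new idea needed beyond what you have outlined.
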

\begin{proof}
  It follows by \cref{prop:CoHAideals} that the $\cR_p$ are ideals which are respect the grading of $\cH_Q$ (i.e. the $\N Q_0 \times \bbZ$-grading if $Q$ is symmetric, or just the $\N Q_0$ if it is not symmetric).
  It therefore suffices to prove that there is an isomorphism
  \[
    \begin{aligned}
      \bigoplus_{p\in\N} \cR_{p,\del}/\cR_{p+1,\del}
      &= \bigoplus_{p\in\N} \rH^\BM_{\bullet-\chi_Q(\del,\del)}(\overline \cS_{\del,\del_1-p})/\rH^\BM_{\bullet-\chi_Q(\del,\del)}(\overline \cS_{\del,\del_1-p-1})
      \\&\cong \bigoplus_{\ell=0}^{\min(\del_0,\del_1)} \rH^\BM_{\bullet-\chi_Q(\del,\del)}(\cS_{\del,\ell}) \\&\cong \bigoplus_{u(\eps) = \del} \rH^\BM_{\bullet-\chi_{Q^\UU}(\eps,\eps)}(\cM_\eps(Q^\UU))
    \end{aligned}
  \]
  which follows from \cref{prop:varphiiso} and \cref{cor:isostratunl}.
\end{proof}

In an analogous way one can shows that $\cH_Q$ admits a filtration by the subspaces
\[
  \cL_p \colonequals \bigoplus_{\del\in \N Q_0} \rH^\BM_{\bullet-\chi_Q(\del,\del)}(\overline \cS_{\del,\del_0-p}),
\]
which are left ideals.
The proofs are analogous and left to the interested reader

\section{Linking}
\label{sec:link}

For a symmetric quiver with a distinguished pair of vertices \cite{EKL20} constructs an adjacency matrix for a \emph{linked} quiver with an additional vertex.
In this section we construct such a linking also for the non-symmetric case with an explicit labeling and study the CoHA.

\subsection{The linked quiver}
\label{ssec:link}

Let $Q$ be a quiver with a distinguished pair of vertices $\{0,1\} \subset Q_0$.
We will give an explicit construction of the linked quiver.

\begin{defn}\label{def:link}
  The \emph{linked quiver} $Q^\LL$ has vertices $Q_0^\LL = Q_0 \sqcup \{\bsquare\}$
and arrows $Q_1^\LL$ consisting of
\[
  \alpha\colon 0\to 1,\quad \beta\colon 1\to 0
\]
and additionally for each $a \colon i\to j$ in $Q_1$ there are the following arrows in $Q_1^\LL$:
\begin{itemize}
\item if $i,j \not\in \{0,1\}$ then the same arrow $a\colon i\to j$ appears in $Q_1^\LL$.
\item if $i\in \{0,1\}$ and $j\not\in\{0,1\}$ then there are two arrows in $Q_1^\LL$ of the form
  \[
    \begin{aligned}
      a\colon i\to j,\quad &a^\bsquare\colon \bsquare \to j,
    \end{aligned}
  \]
\item if $i\not\in \{0,1\}$ and $j\in\{0,1\}$ then there are two arrows  in $Q_1^\LL$ of the form
  \[
    \begin{aligned}
      a\colon i\to j,\quad &a_\bsquare\colon i\to\bsquare,
    \end{aligned}
  \]
\item if $i,j\in\{0,1\}$ then there are four arrows  in $Q_1^\LL$ of the form
  \[
    \begin{aligned}
      a\colon i\to j,\quad &&a^\bsquare\colon \bsquare\to j,\\
      a_\bsquare\colon i\to\bsquare,\quad &&a_\bsquare^\bsquare\colon \bsquare\to\bsquare.
    \end{aligned}
  \]
\end{itemize}
\end{defn}

Hence there is exactly one additional twocycle between $0$ and $1$, and there is an obvious inclusion $Q_1 \subset Q_1^\LL$, with all other arrows starting or ending in $\bsquare$.
If $Q$ is symmetric, its adjacency matrix is exactly the one described in \cite{EKL20}.

The linked quiver $Q^\LL$ has a distinguished 2-cycle $(\alpha,\beta)$, which gives a natural place to apply unlinking; this fact was already applied in \cite{EKL20} to prove a generating-series identity.
In what follows we write $Q^{\LU} = (Q^\LL)^\UU$ for the unlinking of the linked quiver at this 2-cycle.
It is clear that the set of vertices is $Q_0^\LU = Q_0 \sqcup \{\star,\bsquare\}$, and the set of arrows is described as follows.

\begin{lem}\label{lem:LUarrows}
  The arrows of $Q^\LU$ consist of a loop $\beta_\star^\star\colon \star \to \star$, and for each $a \colon i\to j$ in $Q_1$
  \begin{itemize}
  \item if $i,j \not\in \{0,1\}$ the same arrow $a\colon i\to j$ appears in $Q^\LU$
  \item if $i\in \{0,1\}$ and $j\not\in\{0,1\}$ three arrows in $Q^\LU$
    \[
      \begin{aligned}
        a\colon i\to j,\quad &a^\star\colon \star \to j, & a^\bsquare \colon \bsquare \to j.
      \end{aligned}
    \]
  \item if $i\not\in \{0,1\}$ and $j\in\{0,1\}$ three arrows in $Q^\LU$
    \[
      \begin{aligned}
        a\colon i\to j,\quad &a_\star\colon i\to\star, & a_\bsquare \colon i\to \bsquare.
      \end{aligned}
    \]
  \item if $i,j\in\{0,1\}$ nine arrows in $Q^\LU$
    \[
      \begin{aligned}
        a\colon i\to j,\quad &&a^\star\colon \star\to j,\quad & a^\bsquare\colon \bsquare\to j\\
        a_\star\colon i\to \star,\quad &&a_\star^\star\colon \star\to\star,\quad & a_\star^\bsquare \colon \bsquare\to\star\\
        a_\bsquare\colon i\to\bsquare,\quad &&a_\bsquare^\star\colon \star\to\bsquare,\quad & a_\bsquare^\bsquare \colon \bsquare\to\bsquare.
      \end{aligned}
    \]
  \end{itemize}
\end{lem}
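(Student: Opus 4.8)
The plan is to unwind the definition of $Q^\LU=(Q^\LL)^\UU$ directly. By construction this quiver is the unlinking of $Q^\LL$ at its distinguished two-cycle $(\alpha,\beta)$, so \cref{def:unl} immediately gives the vertex set $Q_0^\LL\sqcup\{\star\}=Q_0\sqcup\{\bsquare,\star\}$, and the arrows are obtained by feeding each arrow of $Q^\LL$ through the recipe of \cref{def:unl}, with $\{0,1\}$ the distinguished pair and $\bsquare$ a ``generic'' vertex (i.e.\ $\bsquare\notin\{0,1\}$). Since $Q^\LL$ is itself built from $Q$ via \cref{def:link}, the bookkeeping organizes naturally according to the position of an arrow $a\colon i\to j$ of $Q$ relative to $\{0,1\}$, and completeness of the list follows because every arrow of $Q^\LU$ arises from exactly one arrow of $Q^\LL$, which in turn arises from the two-cycle $(\alpha,\beta)$ or from a unique $a\in Q_1$.

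First I would dispose of the two-cycle: $\alpha$ and $\beta$ are precisely the distinguished two-cycle of $Q^\LL$ being unlinked, so by the last two clauses of \cref{def:unl} one of them contributes a single loop at $\star$ — which we name $\beta_\star^\star\colon\star\to\star$ as in the statement — while the other contributes no arrow. Next I would handle the arrows of $Q^\LL$ coming from $Q_1$. If $a\colon i\to j$ has $i,j\notin\{0,1\}$, then \cref{def:link} leaves a single arrow $a\colon i\to j$ in $Q^\LL$ between generic vertices, and the first clause of \cref{def:unl} reproduces exactly $a\colon i\to j$ in $Q^\LU$. If $i\in\{0,1\}$ and $j\notin\{0,1\}$, \cref{def:link} produces $a\colon i\to j$ and $a^\bsquare\colon\bsquare\to j$; unlinking the first (third clause of \cref{def:unl}) gives $a\colon i\to j$ and $a^\star\colon\star\to j$, and unlinking the second (first clause, $\bsquare$ being generic) gives $a^\bsquare\colon\bsquare\to j$, matching the three listed arrows. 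The mirror case $i\notin\{0,1\}$, $j\in\{0,1\}$ is handled identically and yields $a\colon i\to j$, $a_\star\colon i\to\star$, $a_\bsquare\colon i\to\bsquare$.

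The remaining case $i,j\in\{0,1\}$ is the one I would carry out most carefully. Here \cref{def:link} contributes four arrows $a,\ a^\bsquare,\ a_\bsquare,\ a_\bsquare^\bsquare$ of $Q^\LL$; crucially none of these lies in the distinguished two-cycle $(\alpha,\beta)$ of $Q^\LL$ — those arise only from linking, never from $Q_1$ — so the special fifth clause of \cref{def:unl} is never triggered and $a$ passes through the fourth clause. Unlinking $a\colon i\to j$ then gives $a,\ a^\star,\ a_\star,\ a_\star^\star$; unlinking $a^\bsquare\colon\bsquare\to j$ (second clause, generic source, target in $\{0,1\}$) gives $a^\bsquare$ and $a_\star^\bsquare\colon\bsquare\to\star$; unlinking $a_\bsquare\colon i\to\bsquare$ (third clause) gives $a_\bsquare$ and $a_\bsquare^\star\colon\star\to\bsquare$; and unlinking $a_\bsquare^\bsquare\colon\bsquare\to\bsquare$ (first clause) gives $a_\bsquare^\bsquare$. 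Collecting these nine arrows and reading off the sub/superscript decorations reproduces the array in the statement. The only genuine subtlety is this composition of decorations — tracking whether a given sub/superscript records passage through the $\bsquare$-modification of \cref{def:link} or the $\star$-modification of \cref{def:unl} — together with the observation that the fifth clause of \cref{def:unl} applies solely to $\alpha$ and $\beta$; once these are pinned down the verification is a finite, routine check.
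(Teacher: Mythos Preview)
Your proof is correct and follows exactly the same approach as the paper: both unwind \cref{def:unl} applied to the arrows of $Q^\LL$ from \cref{def:link}, organizing by the position of $a\colon i\to j$ relative to $\{0,1\}$. The paper only spells out the $i,j\in\{0,1\}$ case and declares the rest similar, whereas you carry out all four cases explicitly and make the useful observation that the fifth clause of \cref{def:unl} is never triggered by arrows coming from $Q_1$; otherwise the arguments are identical.
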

\begin{proof}
  This follows directly after applying the construction of \cref{def:unl} to the arrows obtained from \cref{def:link}.
  We explain the appearance of the nine arrows in the last case, the other cases are similar.
  For the case $i,j\in \{0,1\}$ the arrow $a\colon i\to j$ in $Q$ contributes arrows
  \[
    a\colon i\to j,\quad a_\bsquare\colon i\to \bsquare,\quad a^\bsquare\colon \bsquare \to j,\quad a^\bsquare_\bsquare\colon \bsquare \to \bsquare.
  \]
  in $Q^\LL$.
  Of these arrows, the first contributes again four arrows in $Q^\LU = (Q^\LL)^\UU$ of the form
  \[
    a \colon i \to j,\quad a^\star \colon \star \to j,\quad
    a_\star \colon i \to \star,\quad a_\star^\star \colon \star \to \star.
  \]
  The arrow $a_\bsquare$ in $Q^\LL$ has a tail in $\{0,1\}$ and therefore contributes two arrows
  \[
    a_\bsquare\colon i\to \bsquare,\quad a_\bsquare^\star \colon \star \to \bsquare,
  \]
  and likewise, $a^\bsquare$ contributes two arrows $a^\bsquare$ and $a^\bsquare_\star$ in $Q^\LU$.
  The final arrow $a^\bsquare_\bsquare$ does not have head or tail in $\{0,1\}$, so only contributes a single arrow in $Q^\LU$.
\end{proof}

\begin{ex}\label{ex:simplelink}
  A simple example of subsequent linking and unlinked with three vertices:
  \[
    \begin{tikzpicture}
      \node[draw,circle,outer sep=1pt,inner sep=1pt] (a) at (0,0) {$\scriptstyle 0$};
      \node[draw,circle,outer sep=1pt,inner sep=1pt] (b) at (0,-1) {$\scriptstyle 1$};
      \node (d) at (0,-2.5) {$Q$};
      \draw[->] (a) to[out=130,in=50,min distance=10mm,edge label=$\scriptstyle{a}$] (a);
    \end{tikzpicture}
    \hspace{1cm}
    \begin{tikzpicture}
      \node[draw,circle,outer sep=1pt,inner sep=1pt] (a) at (0,0) {$\scriptstyle 0$};
      \node[draw,circle,outer sep=1pt,inner sep=1pt] (b) at (0,-1) {$\scriptstyle 1$};
      \node[draw,circle,outer sep=1pt,inner sep=1pt] (e) at (1.5*\gr,0) {$\scriptstyle \bsquare$};
      \node (d) at (.5*\gr,-2.5) {$Q^\LL$};
      \draw[->] (a) to[bend right=10,edge label'=$\scriptstyle{a_\bsquare}$] (e);
      \draw[->] (a) to[bend left,edge label=$\scriptstyle{\alpha}$] (b);
      \draw[->] (b) to[bend left,edge label=$\scriptstyle{\beta}$] (a);      
      \draw[->] (e) to[bend right=10,edge label'=$\scriptstyle{a^\bsquare}$] (a);
      \draw[->] (a) to[out=130,in=50,min distance=10mm,edge label=$\scriptstyle{a}$] (a);
      \draw[->] (e) to[out=130,in=50,min distance=10mm,edge label=$\scriptstyle{a^\bsquare_\bsquare}$] (e);
    \end{tikzpicture}
    \hspace{1cm}
    \begin{tikzpicture}
      \node[draw,circle,outer sep=1pt,inner sep=1pt] (a) at (0,0) {$\scriptstyle 0$};
      \node[draw,circle,outer sep=1pt,inner sep=1pt] (b) at (0,-1) {$\scriptstyle 1$};
      \node[draw,circle,outer sep=1pt,inner sep=1pt] (e) at (1.5*\gr,1) {$\scriptstyle \bsquare$};
      \node[draw,circle,outer sep=1pt,inner sep=1pt] (d) at (1.5*\gr,-1) {$\scriptstyle \star$};
      \node (f) at (.75*\gr,-2.5) {$Q^\LU$};
      \draw[->] (a) to[bend right=10,edge label'=$\scriptstyle{a_\bsquare}$,pos=.6] (e);
      \draw[->] (e) to[bend right=10,edge label'=$\scriptstyle{a^\bsquare}$] (a);
      \draw[->] (a) to[out=130,in=50,min distance=10mm,edge label=$\scriptstyle{a}$] (a);
      \draw[->] (e) to[out=130,in=50,min distance=10mm,edge label=$\scriptstyle{a^\bsquare_\bsquare}$] (e);
      \draw[->] (d) to[bend right=10,edge label'=$\scriptstyle{a_\bsquare^\star}$] (e);
      \draw[->] (e) to[bend right=10,edge label'=$\scriptstyle{a^\bsquare_\star}$,pos=.6] (d);

      \draw[->] (a) to[bend right=10,edge label'=$\scriptstyle{a_\star}$] (d);
      \draw[->] (d) to[bend right=10,edge label'=$\scriptstyle{a^\star}$,pos=.6] (a);
      \draw[->] (a) to[out=130,in=50,min distance=10mm,edge label=$\scriptstyle{a}$] (a);
      \draw[->] (d) to[out=10,in=-50,min distance=10mm,edge label=$\scriptstyle{a^\star_\star}$] (d);
      \draw[->] (d) to[out=170,in=230,min distance=10mm,edge label'=$\scriptstyle{\beta^\star_\star}$] (d);
    \end{tikzpicture}
  \]
\end{ex}

\subsection{Framed 2-cycles}
\label{sec:framedquiv}

Instead of linking, one can also consider the quiver obtained by simply adding a two-cycle to $Q$: we obtain a quiver $Q^\TT$ with $Q^\TT_0 = Q_0$ and arrows
\[
  Q^\TT_1 = Q_1 \sqcup \{c\colon 0\to 1,\ d\colon 1\to 0\}.
\]
We will consider \emph{framed} representations for $Q^\TT$ with respect to a particular choice of framing data.
For each dimension vector $\del\in \N Q_0$ and $k\in \N$ we consider the space
\[
  R^{(k)}_\del(Q^\TT) = R_\del(Q^\TT) \times \Mat_\C(k,\del_0),
\]
of \emph{$k$-framed representations}, consisting of pairs $(\rho,f)$ of $Q^\TT$-representation $\rho$ and a framing datum $f\colon \C^{\del_0} \to \C^k$.
We impose a further stability condition with respect to the two-cycle.
\begin{defn}
  A \emph{stably $k$-framed representation} is a $k$-framed representation in the subspace
  \[
    X_\del^{(k)} \colonequals \left\{
      \rho \in R_\del^{(k)}(Q^\TT)  \mid  \rk f = \rk(f \circ \rho_d) = k\right\}.
  \]
\end{defn}
We remark that there is an obvious map forgetting the framing, which we denote by
\[
  \forg \colon X_{\del}^{(k)} \to \cM_{\del}(Q^\TT).
\]
The stability condition is invariant under the action of the symmetry group $\GL_\del$ and we therefore obtain a well-defined moduli stack of stably $k$-framed representations
\begin{equation}\label{eq:stabframed}
  \cX_{\del}^{(k)} \colonequals \left[X_\del^{(k)}/ \GL_\del\right].
\end{equation}

We will now do something slightly unusual:
noting that $\GL_k$ acts on the framing data $f$ by base change on the target $\C^k$, we take the further quotient
\begin{equation}\label{eq:framedquot}
  \overline\cX_\del^{(k)} \colonequals \left[\cX_\del^{(k)}/\GL_k\right] = \left[X^{(k)}_\del/(\GL_\del \times \GL_k)\right],
\end{equation}
which can be alternatively presented as the quotient of $X^{(k)}_\del$ by the full symmetry group $\GL_{(\del,k)} = \GL_\del \times \GL_k$.
Although such a quotient is normally undesirable, it turns out to have a relation to the linked quiver.
To show this, we first present the stack \Cref{eq:framedquot} as the quotient of an affine space by an algebraic group.

\begin{prop}
  For each $\del\in \N Q_0$ and $k\in\N$ the stack $\overline\cX_\del^{(k)}$ is the quotient of the subspace
  \[
    F^{(k)}_\del \colonequals
    \left\{
      (\rho,f) \in X_\del^{(k)}
      \ \middle|\ %
      \rho_d = \left(\begin{array}{c|c} * & *\\\hline 0 & I_{k\times k} \end{array}\right),\ %
      f = \left(\begin{array}{c|c}\! 0 \!&\! I_{k\times k}\!\! \end{array}\right)
    \right\},
  \]
  where $I_{k\times k}$ denotes the $k\times k$ identity matrix, by the subgroup of $\GL_\del \times \GL_k$ of the form
  \[
    P^{(k)}_\del \colonequals
    \left\{(g, g') \in \GL_\del \times \GL_k \ \middle|\
      g_0 = \left(\begin{array}{c|c} * & *\\\hline 0 & g' \end{array}\right),\ %
      g_1 = \left(\begin{array}{c|c} * & *\\\hline 0 & g' \end{array}\right)
    \right\}.
  \]
\end{prop}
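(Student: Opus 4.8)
The plan is to mimic the structure of the proof of \Cref{prop:stratumquot}: realise $\overline\cX_\del^{(k)}$ as a quotient by $\GL_{(\del,k)}=\GL_\del\times\GL_k$, then cut down step by step to the ``standard slice'' $F^{(k)}_\del$ with residual symmetry $P^{(k)}_\del$, using at each stage that we are restricting to a fibre of a $\GL_{(\del,k)}$-equivariant locally trivial bundle over a homogeneous space. First I would describe two geometric quantities attached to a stably $k$-framed representation $(\rho,f)\in X^{(k)}_\del$: the surjection $f\colon\C^{\del_0}\twoheadrightarrow\C^k$, equivalently the $k$-codimensional subspace $\ker f\subset\C^{\del_0}$; and the composite $f\circ\rho_d\colon\C^{\del_1}\twoheadrightarrow\C^k$, equivalently $\ker(f\circ\rho_d)\subset\C^{\del_1}$, which by the stability condition has codimension exactly $k$. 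Sending $(\rho,f)$ to the pair $(\ker f,\ \ker(f\circ\rho_d))$ defines a $\GL_{(\del,k)}$-equivariant map $X^{(k)}_\del\to\Gr(\del_0,\del_0-k)\times\Gr(\del_1,\del_1-k)$, and I would check it is a Zariski-locally trivial fibre bundle (each fibre is an affine space once the two subspaces are fixed, since the remaining data of $\rho$ and of $f$ is unconstrained apart from living in a fixed complement).

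Next, fixing the distinguished pair of subspaces $(V_0,V_1)$ where $V_0\subset\C^{\del_0}$ is spanned by the first $\del_0-k$ basis vectors and $V_1\subset\C^{\del_1}$ by the first $\del_1-k$ basis vectors, I would identify $[\ \text{fibre}/\mathrm{stab}(V_0,V_1)\ ]\xrightarrow{\sim}\overline\cX^{(k)}_\del$, exactly as in \Cref{prop:stratumquot}: because the target Grassmannian product is a homogeneous space for $\GL_{(\del,k)}$, the inclusion of the fibre is an equivalence after passing to the respective quotient stacks. The stabiliser $\mathrm{stab}(V_0,V_1)$ consists of tuples $(g,g')$ with $g_0,g_1$ block lower-triangular (preserving $V_0$, $V_1$) and $g'\in\GL_k$ arbitrary, and in the fibre the framing is forced to have the shape $f=(\,0\mid \phi\,)$ for some $\phi\in\GL_k$ while $\rho_d$ has the shape $\left(\begin{smallmatrix}*&*\\0&*\end{smallmatrix}\right)$ with invertible bottom-right block $\psi$. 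Finally I would further restrict along the $\mathrm{stab}(V_0,V_1)$-equivariant map to $\GL_k$ sending $(\rho,f)\mapsto\phi$, whose fibre over $I_{k\times k}$ is $F^{(k)}_\del$ and whose stabiliser is the subgroup where $g'$ and the shared bottom-right blocks of $g_0,g_1$ all coincide; but on $F^{(k)}_\del$ we have $f=(\,0\mid I\,)$, so to preserve $f$ we need $g'$ to equal those blocks, which is exactly $P^{(k)}_\del$. (Note that there is no further constraint forcing $\psi=I$: unlike the $\rho_d$-block in \Cref{prop:stratumquot}, here the bottom-right block of $\rho_d$ is only required to be invertible, and it is absorbed into the ``$*$'' entries.) The same homogeneous-space argument then gives $[F^{(k)}_\del/P^{(k)}_\del]\xrightarrow{\sim}\overline\cX^{(k)}_\del$.

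The main obstacle I anticipate is bookkeeping rather than conceptual: one must verify carefully that the two maps to Grassmannians and to $\GL_k$ really are Zariski-locally trivial bundles with the claimed fibres, and in particular that the stability condition $\rk f=\rk(f\circ\rho_d)=k$ is compatible with these slicings — e.g. that on $F^{(k)}_\del$ the condition $\rk(f\circ\rho_d)=k$ is automatic from the block shape $\rho_d=\left(\begin{smallmatrix}*&*\\0&I\end{smallmatrix}\right)$, $f=(\,0\mid I\,)$ (indeed $f\circ\rho_d=(\,0\mid I\,)$), so that $F^{(k)}_\del$ is genuinely a subspace of $X^{(k)}_\del$ and not a further-constrained locus. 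One should also double-check that $P^{(k)}_\del$ is the full stabiliser, i.e. that requiring $(g,g')$ to fix both the slice $F^{(k)}_\del$ setwise and to act $\GL$-equivariantly forces precisely the stated block form with the two bottom-right corners equal to $g'$; this is a short linear-algebra check using that conjugating $\rho_d$ and acting on $f$ must again land in the slice. Once these checks are in place, the proof assembles from the three successive equivalences of quotient stacks just as in \Cref{prop:stratumquot}.
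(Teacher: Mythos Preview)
Your overall strategy is the same as the paper's: pass to a pair of Grassmannians via $(\ker f,\ker(f\circ\rho_d))$, reduce to the fibre over the standard point, and then slice further to reach $F^{(k)}_\del$. But your final slicing step is incomplete, and it stems from a misreading of the definition of $F^{(k)}_\del$.

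In the statement, the bottom--right block of $\rho_d$ in $F^{(k)}_\del$ is the identity $I_{k\times k}$, not an arbitrary invertible block; it is \emph{not} one of the ``$*$'' entries. So when you slice only along the map $(\rho,f)\mapsto\phi$ (the right block of $f$), the fibre over $I_{k\times k}$ is strictly larger than $F^{(k)}_\del$: it consists of all $(\rho,f)$ with $f=(\,0\mid I\,)$ and $\rho_d=\left(\begin{smallmatrix}*&*\\0&\psi\end{smallmatrix}\right)$ for an arbitrary $\psi\in\GL_k$. Correspondingly, your stabiliser computation is off: preserving $f=(\,0\mid I\,)$ forces the bottom--right block of $g_0$ to equal $g'$, but imposes \emph{no} condition on the bottom--right block of $g_1$. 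You therefore land on a group strictly containing $P^{(k)}_\del$, not $P^{(k)}_\del$ itself.

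The paper fixes this by slicing to $\GL_k\times\GL_k$ in one stroke, sending $(\rho,f)$ to the pair $(\psi,\phi)$ of the bottom--right block of $\rho_d$ and the right block of $f$; the fibre over $(I,I)$ is then genuinely $F^{(k)}_\del$, and the stabiliser of $(I,I)$ forces both bottom--right blocks of $g_0$ and $g_1$ to equal $g'$, yielding $P^{(k)}_\del$. Equivalently, you can simply add a second slicing step along $(\rho,f)\mapsto\psi$ after yours. (A minor slip: preserving $V_0=\mathrm{span}(e_1,\ldots,e_{\del_0-k})$ makes $g_0$ block \emph{upper}-triangular, not lower-triangular, consistent with the form of $P^{(k)}_\del$.)
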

\begin{proof}
  As in the proof of \cref{prop:stratumquot} we have a $\GL_\del \times \GL_k$-equivariant map
  \[
    h\colon X_\del^{(k)} \to \Gr(\del_1-k,\del_1) \times\Gr(\del_0-k,\del_0),\quad \rho \mapsto (\ker(f \circ \rho_d), \ker f),
  \]
  where $\GL_\del \times \GL_k$ acts on the Grassmannians by its action on the vector spaces $\C^{\del_0}$ and $\C^{\del_1}$. 
  Writing $V \subset \C^{\del_1}$ and $W\subset \C^{\del_0}$ for the $k$-planes spanned by the last $k$ basis vectors, the stabiliser $\stab((V,W))$ consists of $(g,g') \in\GL_\del\times \GL_k$ with $g_0$ and $g_1$ block upper-triangular.
  The fibre is given by
  \[
    h^{-1}((V,W)) = \left\{(\rho,f) \in X_\del^{(k)}\ \middle|\ \rho_d = \left(\begin{array}{c|c} * & *\\\hline 0 & C \end{array}\right),\ f = \left(\begin{array}{c|c}\! 0 \!&\! D\!\! \end{array}\right) \right\},
  \]
  where $C$ and $D$ are arbitrary matrices in $\GL_k$.
  There is an equivariant map $h^{-1}((V,W))\to \GL_k \times \GL_k$ mapping $\rho$ to the matrices $(C,D)$, for which the fibre over $(I_{k\times k},I_{k\times k})$ is $F^{(k)}_\del$.
  The stabiliser of $(I_{k\times k},I_{k\times k})$ is $P^{(k)}_\del$, so we can conclude as in \cref{prop:stratumquot} that there is an isomorphism of stacks
  \[
    \overline\cX_\del^{(k)} \cong \left[h^{-1}((V,W))/\stab((V,W))\right] \cong \left[F^{(k)}_\del/P^{(k)}_\del\right].\qedhere
  \]
\end{proof}

The group $P^{(k)}_\del$ has the symmetry group of the linked quiver $Q^\LL$ as its Levi subgroup for some appropriate dimension vector.
As in \Cref{sec:unlink} we therefore find a homotopy equivalence.

\begin{lem}\label{lem:LLtoTT}
  For each dimension vector $(\del,k) \in \N Q_0^\LL$ there is a homotopy equivalence
  \[
      \Psi\colon \cM_{(\del,k)}(Q^\LL) \to \overline\cX_{\del+k(e_0+e_1)}^{(k)}
  \]
  of relative dimension $k^2+\delta_0k$.
\end{lem}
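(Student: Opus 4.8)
The plan is to mimic the structure of \Cref{prop:unlinkstrat} and \Cref{cor:isostratunl}: construct an explicit $\GL_{(\del,k)}$-equivariant closed embedding $R_{(\del,k)}(Q^\LL) \hookrightarrow F_\del^{(k)}$ onto a deformation retract, check equivariance along the inclusion $\GL_{(\del,k)} \hookrightarrow P_\del^{(k)}$, and then pass to quotient stacks. The dimension vector on the $Q^\LL$ side is $(\del,k)$ with $\del\in\N Q_0$ and $\bsquare$-component $k$, and the target dimension vector for $Q^\TT$ is $\del + k(e_0+e_1)$, so that the vertices $0,1$ of $Q^\TT$ have dimensions $\del_0+k$ and $\del_1+k$ and the extra $k$-dimensional blocks correspond to the vertex $\bsquare$.

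First I would write down the embedding explicitly. Given a $Q^\LL$-representation $\rho$ of dimension $(\del,k)$, I assemble a stably $k$-framed $Q^\TT$-representation $\Psi(\rho)$ of dimension $\del+k(e_0+e_1)$ by using block decompositions $\C^{\del_0+k} = \C^{\del_0}\oplus\C^k$ and $\C^{\del_1+k}=\C^{\del_1}\oplus\C^k$: for an arrow $a\colon i\to j$ of $Q$ the four arrows $a, a^\bsquare, a_\bsquare, a_\bsquare^\bsquare$ (when $i,j\in\{0,1\}$), respectively fewer arrows when one endpoint avoids $\{0,1\}$, assemble into the single block matrix $\Psi(\rho)_a$ exactly as in \Cref{prop:unlinkstrat}; the new twocycle arrows $c,d$ are sent to $\Psi(\rho)_c = \left(\begin{smallmatrix} \rho_\alpha & 0\\ 0 & 0\end{smallmatrix}\right)$ (roughly — the placement of $\rho_\alpha$ needs to be chosen so the retraction works) and $\Psi(\rho)_d = \left(\begin{smallmatrix} \rho_\beta & 0\\ 0 & I_{k\times k}\end{smallmatrix}\right)$; and the framing datum is $f = \left(\begin{smallmatrix} 0 & I_{k\times k}\end{smallmatrix}\right)$ on $\C^{\del_0}\oplus\C^k$. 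One checks $\rk f = k$ and $\rk(f\circ\Psi(\rho)_d) = k$, so the image lands in $F_\del^{(k)}$.

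Next I would verify the two structural claims. For the homotopy equivalence: a general element of $F_\del^{(k)}$ has $\rho_d = \left(\begin{smallmatrix} * & *\\ 0 & I\end{smallmatrix}\right)$, and the off-diagonal starred blocks together with the $c$-arrow's extra blocks can be contracted linearly to zero, retracting $F_\del^{(k)}$ onto the image of $\Psi$; since the stability condition is preserved throughout the retraction, this shows $\Psi$ is a $\GL_{(\del,k)}$-equivariant homotopy equivalence. For equivariance: I define the embedding $\GL_{(\del,k)} = \GL_\del\times\GL_k \hookrightarrow P_\del^{(k)}$ sending $((h_i)_{i\in Q_0}, h_\bsquare)$ to $(g,g')$ with $g_i = h_i$ for $i\neq 0,1$, $g_0 = \left(\begin{smallmatrix} h_0 & 0\\ 0 & h_\bsquare\end{smallmatrix}\right)$, $g_1 = \left(\begin{smallmatrix} h_1 & 0\\ 0 & h_\bsquare\end{smallmatrix}\right)$, and $g' = h_\bsquare$, and check $\Psi$ intertwines the actions. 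Finally, as in \Cref{cor:isostratunl}, $P_\del^{(k)}/\GL_{(\del,k)}$ is an affine space (product of the spaces of the starred blocks) hence contractible, so $\GL_{(\del,k)}\hookrightarrow P_\del^{(k)}$ is a homotopy equivalence and $\Psi$ descends to a homotopy equivalence of quotient stacks $\cM_{(\del,k)}(Q^\LL) \to [F_\del^{(k)}/P_\del^{(k)}] \cong \overline\cX_{\del+k(e_0+e_1)}^{(k)}$. The relative dimension is computed by bookkeeping: $\dim\GL_{(\del,k)} - \dim P_\del^{(k)} + \dim F_\del^{(k)} - \dim R_{(\del,k)}(Q^\LL)$, which should come out to $k^2 + \del_0 k$ after using $\codim P_\del^{(k)} = 2k^2$ inside $\GL_\del\times\GL_k$ wait — one must be careful: the relevant count is the difference of stack dimensions, i.e.\ $(\dim F_\del^{(k)} - \dim P_\del^{(k)}) - (\dim R_{(\del,k)}(Q^\LL) - \dim \GL_{(\del,k)})$, and I would just tally arrow-space dimensions against group dimensions on both sides.

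The main obstacle I anticipate is getting the block placements and the precise form of $\Psi(\rho)_c$ right so that (i) the image genuinely lies in the stably framed locus $X_\del^{(k)}$, (ii) $F_\del^{(k)}$ really retracts onto the image (the condition $\rho_d = \left(\begin{smallmatrix} * & *\\ 0 & I\end{smallmatrix}\right)$ with arbitrary stars, versus the rigid block form coming from $\Psi$, must be reconciled — the retraction has to kill exactly the starred blocks of $\rho_d$ and the analogous extra blocks of $\rho_c$ without breaking stability), and (iii) the arrow count matches \Cref{def:link} so that $\Psi$ is actually a bijection onto its image; this is the same combinatorial check as in \Cref{prop:unlinkstrat} but now with the framing and the extra twocycle in play, and the relative-dimension computation is where a sign or off-by-$k$ error is most likely to creep in.
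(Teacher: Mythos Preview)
Your proposal is correct and follows essentially the same route as the paper: the explicit block embedding $\Psi$, the framing $f=(0\mid I_{k\times k})$, the values $\Psi(\rho)_c=\left(\begin{smallmatrix}\rho_\alpha&0\\0&0\end{smallmatrix}\right)$ and $\Psi(\rho)_d=\left(\begin{smallmatrix}\rho_\beta&0\\0&I\end{smallmatrix}\right)$, the retraction onto the image, the Levi embedding $\GL_{(\del,k)}\hookrightarrow P^{(k)}_{\del+k(e_0+e_1)}$, and the passage to quotient stacks are all exactly as in the paper. The only thing you leave implicit is the final tally, which in the paper's bookkeeping comes out as $(k^2+2k\del_0+k\del_1)-k(\del_0+\del_1)=k^2+\del_0k$; your notational shorthand $F_\del^{(k)}$ for $F_{\del+k(e_0+e_1)}^{(k)}$ is harmless given your opening clarification.
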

\begin{proof}
  Given $\rho \in R_{(\del,k)}(Q^\LL)$ we define a pair $(\Psi(\rho),f) \in X_{\del+k(e_0+e_1)}^{(k)}$ of a representation which we also denote as $\Psi(\rho)$ by abuse of notation and a framing data $f = \left(\begin{array}{c|c}\!\!0&I_{k\times k}\!\!\end{array}\right)$.
  The representation has the following values on the two cycle:
  \[
    \Psi(\rho)_c =
    \left(\begin{array}{c|c} \rho_\alpha & 0\\\hline 0 & 0 \end{array}\right),\quad
    \Psi(\rho)_d =
    \left(\begin{array}{c|c} \rho_\beta & 0\\\hline 0 & I_{k\times k} \end{array}\right)
  \]
  and its value on the remaining arrows $a\in Q_1 \subset Q_1^\TT$ is given by
  \[
    \Psi(\rho)_a =
    \begin{cases}
      \rho_a &\text{if } i,j\not\in\{0,1\},\\
      \left(\begin{array}{c|c} \rho_a & \rho_{a^\bsquare} \end{array}\right) &\text{if } i\in\{0,1\}\ j\not\in\{0,1\}\\[1.5ex]
      \left(\begin{array}{c} \rho_a \\\hline \rho_{a_\bsquare} \end{array}\right) &\text{if }  i\not\in\{0,1\}\  j\in\{0,1\}\\[3ex]
      \left(\begin{array}{c|c} \rho_a & \rho_{a^\bsquare}\\\hline \rho_{a_\bsquare} & \rho_{a^\bsquare_\bsquare} \end{array}\right) &\text{if }  i,j\in\{0,1\}.
    \end{cases}
  \]
  By inspection, this defines a map $R_{(\del,k)}(Q^\LL) \to F^{(k)}_{\del+k(e_0+e_1)}$ of relative dimension $k^2 + 2k\del_0+k\del_1$ which is a homotopy equivalence.
  This map is moreover $\GL_{(\del,k)}$-equivariant along the inclusion $\GL_{(\del,k)} \hookrightarrow P^{(k)}_{\del+k(e_0+e_1)}$ mapping $g$ to the pair $(g',g'') \in \GL_\del\times \GL_k$ with  
  \[
    g'_0 = \left(\begin{array}{c|c} g_0 & 0\\\hline 0 & g_\bsquare\end{array}\right),\quad
    g'_1 = \left(\begin{array}{c|c} g_1 & 0\\\hline 0 & g_\bsquare\end{array}\right),\quad
    g'_i = g_i \text{ for } i\neq 0,1,\quad
    g'' = g_\bsquare.
  \]
  The quotient $P^{(k)}_{\del+k(e_0+e_1)}/\GL_{(\del,k)} \cong \bbA^{k(\del_0+\del_1)}$ is affine, hence contractible and there induces a homotopy equivalence between the associated quotient stacks
  \[
    \cM_{(\del,k)}(Q^\LL) 
    = \left[R_{(\del,k)}(Q^\LL)/\GL_{(\del,k)}\right]
    \to \left[F^{(k)}_{\del+k(e_0+e_1)}/P^{(k)}_{\del+k(e_0+e_1)}\right]
    \cong \overline\cX^{(k)}_{\del+k(e_0+e_1)}
  \]
  is a homotopy equivalence.
  This map now has relative dimension $k^2 + \del_0k$.
\end{proof}
If we consider the shifted cohomology groups
\[
  \overline\cH_{Q^\TT,\del}^{(k)}
  \colonequals
  \rH^\bullet(\overline\cX_\del(Q^\LL))[-\chi_{Q^\TT}(\del,\del)-k^2],\quad
\]
we obtain the following corollary from the above lemma.
\begin{cor}\label{cor:LLtoTTco}
  For each $(\del,k) \in \N Q_0^\LL$ the map $\Psi$ induces an isomorphism
  \[
    \cH_{Q^\LL,(\del,k)}
    \cong
    \overline\cH_{Q^\TT,\del+k(e_0+e_1)}^{(k)}.
  \]
\end{cor}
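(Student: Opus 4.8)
The plan is to read \Cref{cor:LLtoTTco} off from \Cref{lem:LLtoTT}, the only real content being a comparison of the normalising shifts. Write $\del' \colonequals \del + k(e_0+e_1)$. Since $\Psi\colon \cM_{(\del,k)}(Q^\LL) \to \overline\cX_{\del'}^{(k)}$ is a homotopy equivalence, $\Psi^*$ is an isomorphism $\rH^n(\overline\cX_{\del'}^{(k)}) \xrightarrow{\ \sim\ } \rH^n(\cM_{(\del,k)}(Q^\LL))$ for every $n$ (both sides being in fact $\rH^\bullet_{\GL_{(\del,k)}}(\pt)$, as $R_{(\del,k)}(Q^\LL)$ and $F^{(k)}_{\del'}$ are affine spaces and $P^{(k)}_{\del'}$ retracts onto $\GL_{(\del,k)}$, though we do not need this). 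Unwinding the definitions of $\cH_{Q^\LL,(\del,k)}$ and $\overline\cH^{(k)}_{Q^\TT,\del'}$, the corollary is therefore equivalent to the Euler-form identity
\[
  \chi_{Q^\LL}\bigl((\del,k),(\del,k)\bigr) \;=\; \chi_{Q^\TT}(\del',\del') + k^2 .
\]

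The first step is to verify this identity. A direct proof: letting $(C_{ij})_{i,j\in Q_0}$ be the adjacency matrix of $Q$, \Cref{def:link} shows that, beyond the new twocycle $(\alpha,\beta)$, the quiver $Q^\LL$ has precisely the arrows of $Q$ together with, for each arrow of $Q$ with an endpoint in $\{0,1\}$, a duplicate through $\bsquare$; whereas $Q^\TT$ has the adjacency matrix of $Q$ with its $(0,1)$- and $(1,0)$-entries each increased by one. Expanding both Euler forms from the definition of the Euler pairing, substituting $\del'_0 = \del_0+k$, $\del'_1 = \del_1+k$ and $\del'_i = \del_i$ otherwise, and collecting terms, a short computation shows that all contributions cancel except an extra $k^2$ — the contribution $\gamma_\bsquare^2$ of the new vertex to the diagonal part of $\chi_{Q^\LL}$, with the remaining $k$-dependence of $\chi_{Q^\TT}(\del',\del')$ absorbed by the $\bsquare$-arrows. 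Alternatively, and more cheaply, the identity is already encoded in the relative dimension recorded in \Cref{lem:LLtoTT}: both $\cM_{(\del,k)}(Q^\LL)$ and $\overline\cX_{\del'}^{(k)}$ are smooth Artin stacks, with $\dim\cM_{(\del,k)}(Q^\LL) = -\chi_{Q^\LL}((\del,k),(\del,k))$, and, applying $\dim[Z/G] = \dim Z - \dim G$ to the presentation $\overline\cX_{\del'}^{(k)} = [X_{\del'}^{(k)}/(\GL_{\del'}\times\GL_k)]$ with $\del'_0 = \del_0+k$, one computes $\dim\overline\cX_{\del'}^{(k)} = -\chi_{Q^\TT}(\del',\del') + k\del_0$; since $\Psi$ has relative dimension $k^2 + \del_0 k = \dim\overline\cX_{\del'}^{(k)} - \dim\cM_{(\del,k)}(Q^\LL)$, this rearranges to exactly the claimed identity.

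Granting the identity, the corollary is immediate: for each $n$,
\[
  \cH^n_{Q^\LL,(\del,k)} \;=\; \rH^{\,n-\chi_{Q^\LL}((\del,k),(\del,k))}\bigl(\cM_{(\del,k)}(Q^\LL)\bigr) \;\cong\; \rH^{\,n-\chi_{Q^\TT}(\del',\del')-k^2}\bigl(\overline\cX_{\del'}^{(k)}\bigr) \;=\; \bigl(\overline\cH^{(k)}_{Q^\TT,\del'}\bigr)^n ,
\]
where the middle isomorphism is $\Psi^*$ combined with the shift identity, and these are plainly compatible as $n$ varies, yielding the asserted isomorphism of graded vector spaces. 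I do not expect a genuine obstacle, as the geometric work is done in \Cref{lem:LLtoTT}; the one point needing care is the purely combinatorial shift comparison, where one must keep straight which entry of the adjacency matrix records an arrow $i\to j$ — which is why I would favour extracting the identity from the dimension count rather than redoing the arrow bookkeeping.
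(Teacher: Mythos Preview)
Your proposal is correct and follows essentially the same approach as the paper's proof: both reduce the corollary to the shift identity $\chi_{Q^\LL}((\del,k),(\del,k)) = \chi_{Q^\TT}(\del',\del') + k^2$ and extract it from the relative-dimension statement in \Cref{lem:LLtoTT} together with the computation $\dim\overline\cX_{\del'}^{(k)} = -\chi_{Q^\TT}(\del',\del') + k\del_0$. Your additional direct adjacency-matrix check is extra work the paper does not do, but your ``cheaper'' alternative is precisely the paper's argument.
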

\begin{proof}
  By construction, the dimension of $\overline \cX^{(k)}_{\del+k(e_0+e_1)}$ is related to the Euler pairing of $Q^\TT$ via
  \[
    \begin{aligned}
      \dim \overline\cX^{(k)}_{\del+k(e_0+e_1)}
      &= -\chi_{Q^\TT}(\del+k(e_0+e_1),\del+k(e_0+e_1)) + k(\del_0+k) - k^2
      \\&= -\chi_{Q^\TT}(\del+k(e_0+e_1),\del+k(e_0+e_1)) + k\del_0
    \end{aligned}
  \]
  Since the relative dimension of $\Psi$ is $k^2+k\del_0$, the shift in $\overline\cH_{Q^\TT,\del+k(e_0+e_1)}$ is
  \[
    -\chi_{Q^\TT}(\del+k(e_0+e_1),\del+k(e_0+e_1)) -k^2 = \dim \overline\cX^{(k)}_\del - k\del_0 -k^2 = \dim \cM_\del(Q^\LL) = -\chi_{Q^\LL}(\del,\del).
  \]
  The pullback along the homotopy equivalence $\Psi$ therefore yields the claimed isomorphism.
\end{proof}

\subsection{Unlinking the two-cycle}
\label{sec:TU}

The quiver $Q^\TT$ has a distinguished 2-cycle $(c,d)$ and therefore has an obvious choice of unlinking $Q^\TU = (Q^\TT)^\UU$ with
\[
  Q^\TU_0 = Q_0 \sqcup \{\star\},
\]
and additional arrows $a_*,a^*,a^*_*$ as described in \cref{def:unl}.
We will consider framed $Q^\TU$-re\-pres\-en\-tat\-ions, for the following choice of framing data.

For each $\eps\in \N Q_0^\TU$ and $k\in \N$ the space of $k$-framed representation is the space
\[
  R_{\eps}^{(k)}(Q^\TU) = R_\eps(Q^\TU) \times \Mat_\C(k,\eps_\star),
\]
consisting of pairs $(\rho,f)$ with $f$ being the framing data.
We impose a stability condition, yielding the following space of \emph{stably $k$-framed representation}
\[
  Y_\eps^{(k)} \colonequals \left\{ (\rho,f) \in R_\eps^{(k)}(Q^\TU) \ \middle|\ \rk f = k \right\}.
\]
There is again an equivariant map $\forg \colon Y^{(k)}_\eps \to R_\eps(Q^\TU)$ forgetting the framing data.
Taking the quotient by the symmetry group of $Q^\TU$ or the full symmetry group $\GL_\eps\times\GL_k$ yields two spaces
\[
  \cY^{(k)}_\eps = \left[Y_\eps^{(k)}/\GL_\eps\right],\quad
  \overline\cY^{(k)}_\eps = \left[Y_\eps^{(k)}/(\GL_\eps\times \GL_k)\right] = \left[\cY_\eps^{(k)}/\GL_k\right],
\]
which are the moduli space of stably framed representations and its quotient.

We now claim that the framing is compatible with the unlinking procedure: for any $\del\in \N Q_0$, $\ell \in \N$, and $k\in \N$ we consider the pre-image of the stratification in \Cref{ssec:strat}
\[
  S_{\del,\ell}^{(k)} = \forg^{-1}(S_{\del,\ell}) = \{ (\uprho,f) \in X_{\del}^{(k)} \mid \rk \rho_d = \ell \},
\]
which are locally closed subspaces giving a decomposition $X^{(k)}_\del = \bigsqcup_{u(\eps) = \del} S_{\del,\ell}^{(k)}$, where the map $u\colon \N Q_0^\TU \to \N Q_0^\TT$ is defined as in \Cref{ssec:unlink}.
The pre-images are clearly invariant under the action of $\GL_\eps$ and $\GL_\eps\times \GL_k$ and therefore also give decompositions
\[
  \cX^{(k)}_\del = \bigsqcup_{u(\eps) = \del} \cS_{\del,\ell}^{(k)},\quad
  \overline\cX^{(k)}_\eps = \bigsqcup_{u(\eps) = \del} \overline\cS_{\del,\ell}^{(k)},
\]
where $\cS_{\del,\ell}^{(k)} = \left[S_{\del,\ell}^{(k)}/\GL_\del\right]$ and $\cS_{\del,\ell}^{(k)} = \left[S_{\del,\ell}^{(k)}/(\GL_\del\times\GL_k)\right] = \left[\cS_{\del,\ell}^{(k)}/\GL_k\right]$.
As in \Cref{prop:stratumquot} these strata can be presented as the quotient of an affine space.

\begin{lem}
  For each $\del\in \N Q_0$ and $\ell,k\in \N$ the strata can be represented as
  \[
    \cS_{\del,\ell}^{(k)} \cong \left[F_{\del,\ell}^{(k)}/P_{\del,\ell}\right],\quad
    \overline\cS_{\del,\ell}^{(k)} \cong \left[F_{\del,\ell}^{(k)}/(P_{\del,\ell}\times \GL_k)\right].
  \]
  where $F_{\del,\ell}^{(k)} \colonequals \forg^{-1}(F_{\del,\ell})$ is the pre-image of the subspace in \Cref{prop:stratumquot}.
\end{lem}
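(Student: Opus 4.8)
The plan is to adapt the proof of \cref{prop:stratumquot} essentially verbatim, letting the framing datum ride along and appending the trivially-acting factor $\GL_k$ wherever it is needed. The key observation is that the forgetful map $\forg\colon X_\del^{(k)}\to R_\del(Q^\TT)$ is just the projection off the affine factor $\Mat_\C(k,\del_0)$, and that $\GL_k$ acts only on this factor; in particular $\GL_k$ commutes with $\forg$ and acts trivially on $R_\del(Q^\TT)$. Composing $\forg$ with the map $h$ of \cref{prop:stratumquot} sending a representation to $(\im\rho_d,\ker\rho_d)$ therefore yields a $(\GL_\del\times\GL_k)$-equivariant, Zariski-locally trivial fibration
\[
  h^{(k)}\colon S_{\del,\ell}^{(k)} \longrightarrow \Gr(\del_0,\ell)\times\Gr(\del_1,\del_1-\ell),
\]
where $\GL_k$ acts trivially on the target; local triviality is inherited from that of $h$ since $S_{\del,\ell}^{(k)} = \forg^{-1}(S_{\del,\ell})$ is simply the product of $S_{\del,\ell}$ with $\Mat_\C(k,\del_0)$. (When $\ell>\min(\del_0,\del_1)$, or when the stability condition forces $S_{\del,\ell}^{(k)}=\varnothing$, both sides of the claimed isomorphisms are empty and there is nothing to prove.)

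Next I would fix, as in \cref{prop:stratumquot}, the coordinate subspaces $V\subset\C^{\del_0}$ and $W\subset\C^{\del_1}$ spanned by the last $\ell$ and the first $\del_1-\ell$ basis vectors, respectively. The fibre $(h^{(k)})^{-1}((V,W))$ equals $\forg^{-1}(h^{-1}((V,W)))$, i.e.\ the space of stably $k$-framed representations whose $\rho_d$ has the lower-right block form with an arbitrary $g\in\GL(\ell)$; its stabiliser in $\GL_\del\times\GL_k$ is $H\times\GL_k$, where $H=\stab((V,W))$ is the subgroup of $\GL_\del$ from \cref{prop:stratumquot}. Since the Grassmannians are homogeneous and $\GL_k$ acts trivially on them, the homogeneous-space argument of \cref{prop:stratumquot} gives isomorphisms
\[
  \left[(h^{(k)})^{-1}((V,W))/H\right] \xrightarrow{\ \sim\ } \cS_{\del,\ell}^{(k)}, \qquad
  \left[(h^{(k)})^{-1}((V,W))/(H\times\GL_k)\right] \xrightarrow{\ \sim\ } \overline\cS_{\del,\ell}^{(k)}.
\]
Finally I would restrict once more along the $H$-equivariant (hence $(H\times\GL_k)$-equivariant) map $(h^{(k)})^{-1}((V,W))\to\GL(\ell)$ recording the block $g$ appearing in $\rho_d$, on which $\GL_k$ again acts trivially. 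Its fibre over the identity is exactly $F_{\del,\ell}^{(k)}=\forg^{-1}(F_{\del,\ell})$, with stabilisers $P_{\del,\ell}$ and $P_{\del,\ell}\times\GL_k$, and one more application of the homogeneous-space argument yields $\cS_{\del,\ell}^{(k)}\cong[F_{\del,\ell}^{(k)}/P_{\del,\ell}]$ and $\overline\cS_{\del,\ell}^{(k)}\cong[F_{\del,\ell}^{(k)}/(P_{\del,\ell}\times\GL_k)]$, as claimed.

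I do not expect a genuine obstacle: the only points requiring attention are (a) that the stability condition $\rk f=\rk(f\circ\rho_d)=k$ is an open condition which is simply inherited at each step, since $S_{\del,\ell}^{(k)}$, $(h^{(k)})^{-1}((V,W))$ and $F_{\del,\ell}^{(k)}$ are all defined as $\forg$-preimages of the corresponding unframed loci, and (b) that the trivially-acting factor $\GL_k$ passes through each homogeneous-space quotient without interference. Both are routine, so in effect the argument is the proof of \cref{prop:stratumquot} with ``$\times\GL_k$'' appended throughout.
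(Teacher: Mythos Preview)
Your proposal is correct and is exactly the argument the paper has in mind: the paper's proof consists of the single sentence ``The proof is analogous to \Cref{prop:stratumquot},'' and you have spelled out precisely that analogy. One minor slip: $S_{\del,\ell}^{(k)}$ is not literally the product $S_{\del,\ell}\times\Mat_\C(k,\del_0)$ (the stability condition cuts out an open subset), but you already note this in point (a), and the homogeneous-space argument does not rely on local triviality anyway.
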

\begin{proof}
  The proof is analogous to \Cref{prop:stratumquot}.
\end{proof}
As in \Cref{sec:unlinkstrat} we find an equivariant homotopy equivalence relating the framed unlinked quiver with the strata in the framed moduli.
\begin{lem}\label{lem:homotTU}
  For every $\eps \in \N Q_0^\TU$ and $k\in\N$ the map $\uppi$ in \Cref{eq:unlinkhomotopy} extends to a map
  \[
    \uppi\colon Y^{(k)}_\eps \to F_{\overline u(\eps)}^{(k)},
  \]
  which is a $\GL_\eps\times\GL_k$-equivariant homotopy equivalence.
\end{lem}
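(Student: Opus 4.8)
The plan is to combine the non-framed homotopy equivalence $\uppi$ of \Cref{prop:unlinkstrat} with the extra framing data, checking that the framing is preserved and behaves coherently under the group inclusion. Concretely, for $\eps \in \N Q_0^\TU$ and $k\in \N$, a stably $k$-framed $Q^\TU$-representation is a pair $(\rho,f)$ with $\rho \in R_\eps(Q^\TU)$ and $f\colon \C^{\eps_\star} \to \C^k$ of full rank $k$. I would send $(\rho,f)$ to the pair $(\uppi(\rho), \tilde f)$ where $\uppi(\rho) \in F_{\overline u(\eps)}$ is the $Q^\TT$-representation built in \Cref{prop:unlinkstrat} and $\tilde f \colon \C^{u(\eps)_0} = \C^{\eps_0 + \eps_\star} \to \C^k$ is the block matrix $\tilde f = \bigl(\begin{array}{c|c} 0 & f \end{array}\bigr)$, placing $f$ on the $\eps_\star$-block of $\C^{u(\eps)_0}$ corresponding to the $\star$-vertex. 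The first thing to verify is that $(\uppi(\rho),\tilde f)$ genuinely lands in $X^{(k)}_{u(\eps)}$, i.e.\ satisfies the stability condition $\rk \tilde f = \rk(\tilde f \circ \uppi(\rho)_d) = k$: since $\uppi(\rho)_d$ restricted to the $\eps_\star$-block is the identity $I_{\eps_\star \times \eps_\star}$ by \Cref{eq:alphablockdecomp}, we get $\tilde f \circ \uppi(\rho)_d = \bigl(\begin{array}{c|c} 0 & f \end{array}\bigr)$ again, which has rank $k$ precisely because $f$ does. Thus the map factors as claimed, $\uppi\colon Y^{(k)}_\eps \to F^{(k)}_{\overline u(\eps)} = \forg^{-1}(F_{\overline u(\eps)})$.

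Next I would argue that this is a homotopy equivalence. The argument is the same retraction as in \Cref{prop:unlinkstrat}: $\uppi$ is a closed embedding (it is a closed embedding in the representation coordinates by \Cref{prop:unlinkstrat}, and the framing coordinate $f \mapsto \tilde f$ is a linear closed embedding), and $F^{(k)}_{\overline u(\eps)}$ retracts onto the image by simultaneously shrinking the off-diagonal blocks of $\uppi(\rho)_c$ to zero (exactly as before) and shrinking the first $\eps_0$ columns of the framing matrix $f$ in $F^{(k)}_{\overline u(\eps)}$ to zero. Since the forgetful map $\forg$ is a trivial affine-space bundle over $F_{\overline u(\eps)}$ with fibre $\Mat_\C(k,u(\eps)_0)$, and the image of the framing embedding is the affine subbundle $\{(0 \mid f)\}$, the pair $(\text{old retraction}, \text{linear retraction on framings})$ is a well-defined deformation retraction of $F^{(k)}_{\overline u(\eps)}$ onto $\uppi(Y^{(k)}_\eps)$; hence $\uppi$ is a homotopy equivalence.

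Finally I would check $\GL_\eps \times \GL_k$-equivariance along the appropriate group homomorphism. The $\GL_k$-factor acts on $f$ by post-composition on $\C^k$ and on $\tilde f = (0\mid f)$ in the same way, and acts trivially on all representation data on both sides, so it is carried along identically; I map it to the $\GL_k$-factor of $P_{\overline u(\eps)} \times \GL_k$. The $\GL_\eps$-factor is sent into $P_{\overline u(\eps)}$ by the same embedding $(h_i)_{i\in Q_0^{\TU}} \mapsto (g_i)_{i\in Q_0^\TT}$ used in \Cref{prop:unlinkstrat}, namely $g_i = h_i$ for $i\neq 0,1$ and $g_0 = \bigl(\begin{smallmatrix} h_0 & 0 \\ 0 & h_\star \end{smallmatrix}\bigr)$, $g_1 = \bigl(\begin{smallmatrix} h_1 & 0 \\ 0 & h_\star \end{smallmatrix}\bigr)$; the new point is only that the $h_\star$-block of $g_0$ acts on $\C^{\eps_\star}$ the same way $h_\star$ acts on the source of $f$, so $\uppi(h\cdot\rho) = g \cdot \uppi(\rho)$ extends to $\tilde f \mapsto \tilde f \circ g_0^{-1} = f \circ h_\star^{-1}$ compatibly. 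The main (and essentially only) obstacle is purely bookkeeping: making sure the $\star$-block of $\C^{u(\eps)_0}$ into which $f$ is inserted is exactly the block on which $\uppi(\rho)_d$ restricts to the identity, so that the stability condition is transported correctly; once that indexing is pinned down, everything else is a direct extension of \Cref{prop:unlinkstrat} and \Cref{cor:isostratunl}.
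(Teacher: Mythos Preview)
Your proposal is correct and follows essentially the same approach as the paper: define the extended map by $(\rho,f)\mapsto(\uppi(\rho),(0\mid f))$, verify that the block form of $\uppi(\rho)_d$ forces the stability condition to hold, identify the image as the locus where $f_1=0$ (with $f_2$ automatically of full rank), and retract by shrinking both the extra blocks of $c$ and the $f_1$-columns. If anything, your treatment is slightly more complete than the paper's, which checks equivariance only along $\GL_\eps \to P_{\overline u(\eps)}$ and leaves the $\GL_k$-factor implicit.
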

\begin{proof}
  By construction, the points of $F_{\del,\ell}^{(k)}$ consist of pairs $(\rho,f) \in R_{\del,\ell}^{(k)}(Q^\TT)$ such that
  \[
    \rho_d =\left(
      \begin{array}{c|c}
        0 & 0 \\[.2em]\hline\rule{0pt}{1em} 0 & I_{\ell\times\ell}
      \end{array}
    \right),
  \]
  and $f$ satisfies the stability condition, which translates to $f$ being of the form
  \[
    f \colonequals \left(\begin{array}{c|c}\!\!f_1 & f_2\!\!\end{array}\right),
  \]
  for $f_1 \in \Mat_\C(k,\del_0-\ell)$ arbitrary and $f_2 \in \Mat_\C(k,\ell)$ a matrix of rank $k$.
  Given an $\eps\in \N Q_0^\TU$ with $\overline u(\eps) = (\del,\ell)$ we consider the map
  \[
    \uppi \colon Y^{(k)}_\eps \to R_{\overline u(\eps)}^{(k)}(Q^\TT)\quad
    (\tau,k)\mapsto
    (\rho,f) = \left(\uppi(\tau), \left(\begin{array}{c|c}\!\!0 & k\!\!\end{array}\right) \right)
  \]
  where $\uppi(\tau)$ is defined as in \Cref{eq:unlinkhomotopy}.
  The image of this map is exactly the subspace of $F_{\del,\ell}^{(k)}$ where $f_1 = 0$ and $f_2 = k$ is an arbitrary matrix of full rank.
  This image is a retraction of $F_{\del,\ell}^{(k)}$, hence a homotopy equivalence.
  By inspection, the map is equivariant with respect to the map $\GL_\eps \to P_{\del,\ell}$ in \Cref{prop:unlinkstrat}.
\end{proof}
\begin{prop}\label{prop:decomplink}
  For each $\del\in \N Q_0^\TT$ there are decompositions in cohomology
  \[
    \rH^{\bullet-\chi_{Q^\TT}(\del,\del)}(\overline\cX^{(k)}_\del)
    \cong \bigoplus_{u(\eps) = \del} \rH^{\bullet-\chi_{Q^\TU}(\eps,\eps)}(\overline\cY^{(k)}_\eps)
  \]
\end{prop}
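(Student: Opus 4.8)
The plan is to transport the proof of \Cref{thm:gradeddecomp}, together with its key input \Cref{prop:varphiiso}, to the $k$-framed setting, using \Cref{lem:homotTU} in place of \Cref{prop:unlinkstrat}. The only genuinely new point is the vanishing of odd cohomology: the proof of \Cref{cor:evencohom} rested on the contractibility of the slice $F_{\del,\ell}$, whereas the framed slice $F_{\del,\ell}^{(k)}$ is not contractible, so the extra $\GL_k$-quotient has to be used.

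I would first establish the framed analogue of \Cref{cor:evencohom}: for every $\eps\in\N Q_0^\TU$ and $k\in\N$, the cohomology $\rH^\bullet(\overline\cY^{(k)}_\eps)$ and the Borel--Moore homology $\rH^\BM_\bullet(\overline\cY^{(k)}_\eps)$ are concentrated in even degrees. Since the stability condition only constrains $f$, one has $Y^{(k)}_\eps = R_\eps(Q^\TU)\times\{f\in\Mat_\C(k,\eps_\star)\mid \rk f = k\}$, and the projection to the second factor is a $\GL_\eps\times\GL_k$-equivariant vector bundle, hence an equivariant homotopy equivalence. On the base the $\GL_k$-action is free with quotient the Grassmannian $\Gr$ of $(\eps_\star-k)$-planes in $\C^{\eps_\star}$, and the residual $\GL_\eps$-action is through the transitive $\GL_{\eps_\star}$-action on $\Gr$; therefore $\overline\cY^{(k)}_\eps$ is homotopy equivalent to $[\Gr/\GL_{\eps_\star}]\times\prod_{i\in Q_0}\bB\GL_{\eps_i}$, and $[\Gr/\GL_{\eps_\star}]\cong\bB(\GL_k\times\GL_{\eps_\star-k})$ as $\Gr$ is homogeneous with parabolic stabiliser. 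Hence the cohomology is a polynomial ring on even-degree generators, and dualising over the smooth stack gives the Borel--Moore statement. By \Cref{lem:homotTU} and the argument of \Cref{cor:isostratunl} --- using that $P_{\overline u(\eps)}/\GL_\eps\cong\bbA^{(\eps_0+\eps_1)\eps_\star}$ is contractible --- there is a homotopy equivalence $\overline\cY^{(k)}_\eps\to\overline\cS^{(k)}_{\overline u(\eps)}$, so the framed strata $\overline\cS^{(k)}_{\del,\ell}$ are even as well. (Evenness of $\overline\cX^{(k)}_\del$ itself is also immediate from \Cref{lem:LLtoTT}, but it is evenness of the strata that the argument needs.)

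With this in hand, the proof of \Cref{prop:varphiiso} goes through verbatim: the closed substacks $[\{(\rho,f)\in X^{(k)}_\del\mid \rk \rho_d\leq\ell\}/(\GL_\del\times\GL_k)]$, for $\ell$ from $k$ to $\min(\del_0,\del_1)$, filter $\overline\cX^{(k)}_\del$ with open successive strata $\overline\cS^{(k)}_{\del,\ell}$ (the $\ell=k$ stratum being closed), and the long exact sequences in Borel--Moore homology split, giving $\rH^\BM_\bullet(\overline\cX^{(k)}_\del)\cong\bigoplus_{u(\eps)=\del}\rH^\BM_\bullet(\overline\cS^{(k)}_{\overline u(\eps)})$, where summands with $\eps_\star<k$ vanish because those strata (and the spaces $\overline\cY^{(k)}_\eps$) are empty. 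It then remains to track shifts as in \Cref{thm:gradeddecomp}: dualising over the smooth stacks, using $\dim\overline\cX^{(k)}_\del = -\chi_{Q^\TT}(\del,\del)+k\del_0-k^2$ and the fact that $\overline\cS^{(k)}_{\overline u(\eps)}$ has codimension $(\del_0-\eps_\star)(\del_1-\eps_\star)=\eps_0\eps_1$ in $\overline\cX^{(k)}_\del$ (the framing adds $k\del_0$ to both dimensions, so the codimension is unchanged from \Cref{prop:stratumquot}), transports the degree $\bullet-\chi_{Q^\TT}(\del,\del)$ to $\bullet-\chi_{Q^\TT}(\del,\del)-2\eps_0\eps_1$ on $\overline\cY^{(k)}_\eps$; by the Euler-form comparison of \Cref{ssec:unlink} applied to $Q^\TT$, which gives $\chi_{Q^\TU}(\eps,\eps)-\chi_{Q^\TT}(u(\eps),u(\eps))=2\eps_0\eps_1$, this equals $\bullet-\chi_{Q^\TU}(\eps,\eps)$, as claimed.

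The main obstacle is the first step. Unlike in the unlinking case, evenness of the cohomology of the framed strata is not a formal consequence of contracting an affine slice: $F^{(k)}_{\del,\ell}$ has a noncontractible factor (full-rank $k\times\ell$ matrices), and one must exploit the freeness of the $\GL_k$-action to replace it by a Grassmannian, whose cohomology is still concentrated in even degrees. Everything else is a faithful transcription of the proofs of \Cref{prop:varphiiso,thm:gradeddecomp}.
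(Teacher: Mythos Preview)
Your proposal is correct and follows essentially the same route as the paper: dualise to Borel--Moore homology, split the filtration by closures of the $\overline\cS^{(k)}_{\del,\ell}$ using evenness of the strata, then dualise back and match the shifts via the Euler-form identity and \Cref{lem:homotTU}. The paper's proof merely asserts that the framed strata have even (co)homology, whereas you actually supply the argument via the identification of $\overline\cY^{(k)}_\eps$ with a product of classifying spaces --- this is a genuine improvement in detail, and your observation that the minimal nonempty stratum is $\ell=k$ is also a point the paper leaves implicit.
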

\begin{proof}
  Because $\overline \cX^{(k)}_\del$ is smooth of dimension $-\chi_{Q^\TT}(\del,\del) + k\del_0 - k^2$, we can dualise and consider the Borel--Moore homology group
  \begin{equation}\label{eq:decompXS}
    \rH^{\bullet-\chi_{Q^\TT}(\del,\del)}(\overline\cX^{(k)}_\del)
    \cong
    \rH^\BM_{-\bullet-\chi_{Q^\TT}(\del,\del)+2k\del_0 -2k^2}(\overline\cX^{(k)}_\del),\quad
  \end{equation}
  We claim there is a filtration on the Borel--Moore homology via the Borel--Moore homology groups of the closures of the strata as in \Cref{eq:BMsequence} which yields a decomposition
  \[
    \rH^\BM_{-\bullet-\chi_{Q^\TT}(\del,\del)+2k\del_0 -2k^2}(\overline\cX^{(k)}_\del)
    \cong
    \bigoplus_{\ell=0}^{\min(\del_0,\del_1)} \rH^\BM_{-\bullet-\chi_{Q^\TT}(\del,\del)+2k\del_0 -2k^2}(\cS^{(k)}_{\del,\ell}).
  \]
  The proof follows from a similar argument as in \Cref{prop:stratumquot}, using the fact that $\cS_{\del,\ell}^{(k)}$ has homology/cohomology concentrated in even degrees.
  As in \Cref{thm:gradeddecomp} we may represent the pairs $(\del,\ell)$ as $\overline u(\eps)$ for some unique $\eps\in \N Q_0^\TU$ and the dimension of a stratum $\cS_{\overline u(\eps)}^{(k)}$ satisfies
  \[
    \begin{aligned}
      2\dim \cS_{\overline u(\eps)}^{(k)}
      &= 2\dim \overline \cX^{(k)}_{u(\eps)} - 2\eps_0\eps_1\\
      &= -\chi_{Q^\TT}(u(\eps),u(\eps)) +2k(\eps_0+\eps_\star)-2k^2 - \chi_{Q^\TU}(\eps,\eps).
    \end{aligned}
  \]
  Since the strata are smooth, we can dualise again to obtain
  \[
    \rH^\BM_{-\bullet-\chi_{Q^\TT}(u(\eps),u(\eps))+2k(\eps_0+\eps_\star) -2k^2}(\cS^{(k)}_{\overline u(\eps)})
    \cong
    \rH^{\bullet-\chi_{Q^\TU}(\eps,\eps)}(\cS^{(k)}_{\overline u(\eps)})
    \cong
    \rH^{\bullet-\chi_{Q^\TU}(\eps,\eps)}(\cY^{(k)}_\eps),
  \]
  where the last isomorphism follows from the homotopy equivalence \Cref{lem:homotTU}.
  Combining this isomorphism with the decomposition \Cref{eq:decompXS} yields the result.
\end{proof}
As in the previous section, we denote the shifted cohomology of $\overline \cY_\eps^{(k)}$ by
\[
  \overline\cH_{Q^\TU,\eps}^{(k)}
  \colonequals
  \rH^\bullet(\overline\cY_\eps^{(k)})[-\chi_{Q^\TU}(\eps,\eps)-k^2],
\]
and obtain the following corollary after comparing with \Cref{cor:LLtoTTco}.
\begin{cor}\label{cor:LTUisovs}
  For each $(\del,k) \in \N Q_0^\LL$ there is an isomorphisms of graded vector spaces
  \[
    \cH_{Q^\LL,(\del,k)} \cong
    \overline\cH_{Q^\TT,\del+k(e_0+e_1)}^{(k)}
    \cong
    \bigoplus_{u(\eps)=\del+k(e_0+e_1)} \overline \cH_{Q^\TU,\eps}^{(k)}.
  \]
\end{cor}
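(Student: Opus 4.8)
The statement is a corollary obtained by concatenating two isomorphisms already established, so the plan is simply to splice them together while tracking the cohomological shifts.

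The first isomorphism, $\cH_{Q^\LL,(\del,k)} \cong \overline\cH_{Q^\TT,\del+k(e_0+e_1)}^{(k)}$, is exactly the content of \Cref{cor:LLtoTTco}, which was itself deduced from the homotopy equivalence $\Psi$ of \Cref{lem:LLtoTT} together with the dimension count identifying the shift defining $\overline\cH_{Q^\TT}^{(k)}$ with $-\chi_{Q^\LL}(\del,\del)$; so no further work is needed there. For the second isomorphism I would apply \Cref{prop:decomplink} with its dimension vector specialised to $\del' \colonequals \del + k(e_0+e_1) \in \N Q_0^\TT$, which yields
\[
  \rH^{\bullet-\chi_{Q^\TT}(\del',\del')}(\overline\cX^{(k)}_{\del'}) \cong \bigoplus_{u(\eps) = \del'} \rH^{\bullet-\chi_{Q^\TU}(\eps,\eps)}(\overline\cY^{(k)}_\eps).
\]
Shifting both sides further by $[-k^2]$ and comparing with the definitions $\overline\cH_{Q^\TT,\del'}^{(k)} = \rH^\bullet(\overline\cX^{(k)}_{\del'})[-\chi_{Q^\TT}(\del',\del')-k^2]$ and $\overline\cH_{Q^\TU,\eps}^{(k)} = \rH^\bullet(\overline\cY^{(k)}_\eps)[-\chi_{Q^\TU}(\eps,\eps)-k^2]$ turns this into $\overline\cH_{Q^\TT,\del'}^{(k)} \cong \bigoplus_{u(\eps)=\del'} \overline\cH_{Q^\TU,\eps}^{(k)}$. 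Composing the two isomorphisms then gives the claim, with the indexing set of the direct sum being exactly those $\eps \in \N Q_0^\TU$ with $u(\eps) = \del+k(e_0+e_1)$.

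The only point requiring any care — and it is not really an obstacle — is that the additional $k^2$ appearing in the shifts is common to every term, since the framing dimension $k$ is fixed throughout and is independent of $\eps$; hence it can be absorbed uniformly on both sides of \Cref{prop:decomplink}. Everything else is already contained in \Cref{cor:LLtoTTco} and \Cref{prop:decomplink}, so the argument reduces to citing these two results once the shift bookkeeping has been made explicit.
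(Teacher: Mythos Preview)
Your proposal is correct and matches the paper's approach exactly: the corollary is stated immediately after \Cref{prop:decomplink} with the remark that it follows ``after comparing with \Cref{cor:LLtoTTco}'', and your argument makes this comparison explicit, including the uniform $[-k^2]$ shift needed to pass from the statement of \Cref{prop:decomplink} to the definitions of $\overline\cH_{Q^\TT}^{(k)}$ and $\overline\cH_{Q^\TU}^{(k)}$.
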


\subsection{A differential}
\label{sec:grass}

We wish to assemble the collection of vector spaces $\cH_{Q^\TU,\eps}^{(k)}$ into a chain complex.
To construct a differential, we first introduce a construction involving the cohomology of Grassmannians.

For $n,m\in\N$ we can present the Grassmannian of $n-m$-dimensional subspaces in $\C^n$ as the quotient
\[
  \Gr_{n,m} \colonequals \{M \in \Mat_\C(m,n) \mid \rk M = m \} / \GL_m,
\]
where the quotient is by the action of $\GL_m$ on the left.
The cohomology can be described via the Schubert calculus (see e.g. \cite{F97}) which we briefly recall:
for any complete flag $0 = F_0 \subset F_1 \subset \ldots \subset F_n = \C^n$ and every subset $I\subset \{1,\ldots,n\}$ with $\# I = n-m$ there is a Schubert variety
\[
  \Omega_I(F) \colonequals \{[M] \in \Gr_{n,m} \mid \dim(\ker(M) \cap F_j) \geq \#\{i\in I \mid i < j\}\},
\]
and the classes of the Schubert varieties for all $I$ give a basis $\rH^\bullet(\Gr_{n,m}) = {\textstyle\bigoplus_I} \bbQ [\Omega_I(F)]$, where the class $[\Omega_I(F)]$ is independent of the chosen flag $F$.
The Grassmannian has a transitive action of $\GL_n$ by multiplication on the right and in particular an action of the maximal torus $T_n\subset \GL_n$.
The Schubert varieties are $T_n$-equivariant and form a basis for the $T_n$-equivariant cohomology
\[
  \rH_{T_n}^\bullet(\Gr_{n,m}) = \rH_{T_n}^\bullet(\pt) [\Omega_I(F)],
\]
regarded as a module over $\rH_{T_n}^\bullet(\pt)$.
The $\GL_n$-equivariant cohomology is given by the invariants w.r.t the (Weyl) subgroup $W\subset \GL_n$ of permutation matrices via an identification
\[
  \rH_{\GL_n}^\bullet(\Gr_{n,m}) \cong \rH_{T_n}^\bullet(\Gr_{n,m})^W \subset \rH_{T_n}^\bullet(\Gr_{n,m}),
\]
where $(-)^W$ denotes the invariants with respect to the action of $W$ by right multiplication.
An element $w\in W \subset \GL_n$ maps a Schubert variety $\Omega_I(F)$ to the Schubert variety $\Omega_I(F)w = \Omega_I(wF)$ which has the same class $[\Omega_I(wF)] = [\Omega_I(F)]$.
The Schubert classes therefore restrict to a basis
\[
  \rH_{\GL_n}^\bullet(\Gr_{n,m}) = \rH_{T_n}^\bullet(\pt)^W [\Omega_I(F)],
\]
for the $\GL_n$-equivariant cohomology as a module over $\rH_{T_n}^\bullet(\pt)^W \cong \rH_{\GL_n}^\bullet(\pt)$.

With the background out of the way, we are ready to define the differential.
Writing $e_1\in \C^n$ for the first basis vector in $\C^n$, where is a decomposition $\Gr_{n,m} = U_{n,m} \sqcup Z_{n,m}$ where 
\[
  U_{n,m} = \{ [M] \in \Gr_{n,m} \mid Me_1 \neq 0 \},\quad
  Z_{n,m} = \{ [M] \in \Gr_{n,m} \mid Me_1 = 0\},
\]
which are respectively an open and closed $T_n$-invariant subvariety.

\begin{lem}\label{lem:equivZU}
  For each $m<n$ there is a $T_n$-equivariant homotopy equivalence $\iota \colon Z_{n,m} \to U_{n,m+1}$.
\end{lem}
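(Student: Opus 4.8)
The plan is to exhibit explicit $T_n$-equivariant maps $\iota\colon Z_{n,m}\to U_{n,m+1}$ and $\pi\colon U_{n,m+1}\to Z_{n,m}$ with $\pi\circ\iota=\id$, and then identify $\pi$ as an algebraic vector bundle whose zero section is exactly $\iota$; fibrewise scaling then contracts $\iota\circ\pi$ to the identity, equivariantly. It is cleanest to pass to the description of these quotients as Grassmannians of subspaces, $\Gr_{n,m}=\{V\subseteq\C^n\mid\dim V=n-m\}$ via $[M]\mapsto\ker M$, under which $Z_{n,m}=\{\dim V=n-m,\ e_1\in V\}$ and $U_{n,m+1}=\{\dim V=n-m-1,\ e_1\notin V\}$. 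Writing $\C^{n-1}\colonequals\langle e_2,\dots,e_n\rangle$, I would set $\iota(V)\colonequals V\cap\C^{n-1}$ and $\pi(W)\colonequals W+\langle e_1\rangle$. Since $e_1\in V$ forces $V=\langle e_1\rangle\oplus(V\cap\C^{n-1})$, the subspace $\iota(V)$ has dimension $n-m-1$ and avoids $e_1$; dually $\pi(W)$ has dimension $n-m$ and contains $e_1$; and one reads off $\pi\circ\iota=\id$. Both maps are $T_n$-equivariant because $\langle e_1\rangle$ and $\C^{n-1}$ are $T_n$-stable. In the matrix language of the paper, $\iota$ sends the normal form $[(0\mid M')]$ of a point of $Z_{n,m}$ to the class of $\left(\begin{smallmatrix}1&0\\0&M'\end{smallmatrix}\right)$, and $\pi$ sends the class of a row-reduced representative $\left(\begin{smallmatrix}1&r\\0&N'\end{smallmatrix}\right)$ to $[(0\mid N')]$.

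Next I would compute the fibres of $\pi$. Over $V\in Z_{n,m}$, writing $V_0\colonequals V\cap\C^{n-1}$ so that $V=\langle e_1\rangle\oplus V_0$, the fibre $\pi^{-1}(V)$ is the set of hyperplanes $W\subset V$ with $e_1\notin W$; each such $W$ is the graph $\{v+\phi(v)e_1\mid v\in V_0\}$ of a unique functional $\phi\in V_0^{\vee}$, and conversely every such graph is a hyperplane of $V$ avoiding $e_1$. Hence $\pi$ is an algebraic vector bundle of rank $n-m-1$ over $Z_{n,m}$, with fibre $\mathrm{Hom}(V\cap\C^{n-1},\langle e_1\rangle)$ over $V$, and its zero section ($\phi=0$) is exactly $\iota$. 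Consequently multiplication by $t\in[0,1]$ on fibres is a homotopy $H$ with $H(-,1)=\id$ and $H(-,0)=\iota\circ\pi$; as $T_n$ acts fibrewise linearly, $H$ is $T_n$-equivariant, and therefore $\iota$ is a $T_n$-equivariant homotopy equivalence. Concretely $H$ is the map $[\left(\begin{smallmatrix}1&r\\0&N'\end{smallmatrix}\right)]\mapsto[\left(\begin{smallmatrix}1&tr\\0&N'\end{smallmatrix}\right)]$, equivalently $W\mapsto\{v+t\phi(v)e_1\mid v\in V_0\}$.

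I expect the only genuine work to be bookkeeping rather than conceptual. The hard part will be checking that the row-reduced representative $\left(\begin{smallmatrix}1&r\\0&N'\end{smallmatrix}\right)$, hence the formula for $H$, is well defined: the ambiguity in the reduction is left multiplication by $\left(\begin{smallmatrix}1&s\\0&h_0\end{smallmatrix}\right)$ with $h_0\in\GL_m$, which replaces $(r,N')$ by $(r+sN',h_0N')$, and one verifies $\left(\begin{smallmatrix}1&t(r+sN')\\0&h_0N'\end{smallmatrix}\right)=\left(\begin{smallmatrix}1&ts\\0&h_0\end{smallmatrix}\right)\left(\begin{smallmatrix}1&tr\\0&N'\end{smallmatrix}\right)$, so the $\GL_{m+1}$-class is unchanged. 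Continuity of $\pi$, $\iota$ and $H$ follows since the reduction can be performed algebraically on a Zariski-open cover and the subspace descriptions ($\pi(W)=W+\langle e_1\rangle$, $H(W,t)=\{v+t\phi(v)e_1\}$) show the local formulas glue; $T_n$-equivariance of $H$ is the identity $H(W\cdot g,t)=H(W,t)\cdot g$ for $g\in T_n$, which is immediate from $ge_1\in\langle e_1\rangle$. Finally, the hypothesis $m<n$ ensures that $Z_{n,m}$ and $U_{n,m+1}$ are nonempty and that the fibre dimension $n-m-1$ is $\geq0$, so no degenerate cases arise.
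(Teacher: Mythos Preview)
Your proposal is correct and takes essentially the same approach as the paper: both use the row-reduced representative $\left(\begin{smallmatrix}1&r\\0&N'\end{smallmatrix}\right)$ for points of $U_{n,m+1}$, define $\iota$ by adjoining the row $e_1^T$, and obtain the homotopy by scaling $r$ to zero. Your version supplies more detail (the subspace description, the vector-bundle interpretation of $\pi$, and the explicit check that the homotopy is well defined under the residual $\GL_{m+1}$-ambiguity), but the underlying construction is identical.
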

\begin{proof}
  Any class in $U_{n,m+1}$ can be as $[M]$ for a matrix $M \in \Mat_\C(m+1,n)$ of the form
  \[
    M = \left(\begin{array}{c|c} 1 & M' \\\hline 0 & M'' \end{array}\right),
  \]
  where $M''$ has rank $m$ and $M'\in \Mat_\C(1,n-1)$ is arbitrary.
  Shrinking $M'$ to $0$ yields a deformation retraction of $U_{n,m+1}$ onto the subspace $M'=0$, which is exactly the image of 
  \[
    \iota \colon Z_{n,m} \to U_{n,m+1},\quad
    [N] \mapsto \iota([N]) = \left[\begin{array}{c} e_1^T \\\hline N \end{array}\right].
  \]
  It follows that this map is a homotopy equivalence, which is moreover $T_n$-invariant by inspection.
\end{proof}

The restriction along the above map induces an isomorphism on $T_n$-equivariant cohomology.
Composing this map with the restriction to $U_{n,m+1}$ and the Gysin map for $Z_{n,m}$, we obtains maps
\begin{equation}\label{eq:defDk}
  \d_m\colon
  \rH_{T_n}^\bullet(\Gr_{n,m+1}) \to
  \rH_{T_n}^\bullet(U_{n,m+1}) \xrightarrow{\iota^*}
  \rH_{T_n}^\bullet(Z_{n,m}) \to
  \rH_{T_n}^\bullet(\Gr_{n,m})[2m],
\end{equation}
for every $m< n$.
We claim that this map restricts to $W$-invariants, as the following lemma shows.

\begin{lem}\label{lem:Wcommutes}
  For each $m<n$ the map $\d_m$ commutes with the action of $W$.
\end{lem}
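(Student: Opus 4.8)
The three maps composed in \eqref{eq:defDk}---restriction to an open subset, the pullback $\iota^\ast$ along the homotopy equivalence of \Cref{lem:equivZU}, and a Gysin pushforward---are each functorial for linear automorphisms of $\C^n$ and $\rH^\bullet_{T_n}(\pt)$-semilinear, so the only obstruction to $W$-equivariance of $\d_m$ is that the decomposition $\Gr_{n,\bullet}=U_{n,\bullet}\sqcup Z_{n,\bullet}$, and hence the map $\iota$, is tied to the coordinate line $\langle e_1\rangle$, which $W$ does not fix. My plan is therefore to first make this dependence explicit: for any line $\langle v\rangle\subset\C^n$ the same recipe produces subvarieties $U^v_{n,\bullet},Z^v_{n,\bullet}$, a homotopy equivalence $\iota^v$ as in \Cref{lem:equivZU}, and an operator $\d^v_m$ defined by \eqref{eq:defDk}, with $\d_m=\d^{e_1}_m$. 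Since $w\in W\subset\GL_n$ acts on the right, it carries $U^v_{n,\bullet}$, $Z^v_{n,\bullet}$ and $\iota^v$ to the analogous objects built from a permuted coordinate vector, so a diagram chase---base change for the open restriction, for $(\iota^v)^\ast$ and for the Gysin map, the relevant squares being Cartesian with $w$ an isomorphism---shows that $w$ intertwines $\d_m$ with $\d^{e_i}_m$ for the coordinate vector $e_i$ determined by $w$. The lemma is thus equivalent to the independence of $\d^v_m$ from the chosen coordinate line.

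To prove that independence I would use that $\d^v_m$ is $\rH^\bullet_{T_n}(\pt)$-linear (the restriction and $(\iota^v)^\ast$ plainly, and the Gysin map by the projection formula), so it is determined by its action on the equivariant Schubert basis recalled above \cite{F97}. Taking a $T_n$-invariant flag $F$ with $F_1=\langle v\rangle$, I would track a Schubert class $[\Omega_I(F)]$ through the three steps of \eqref{eq:defDk}---restricting it to the open $U^v_{n,m+1}$, pulling back along $\iota^v$ to $Z^v_{n,m}\cong\Gr_{n-1,m}$, and applying the codimension-$m$ Gysin pushforward to $\Gr_{n,m}$---with the aim of expressing the result as an explicit $\rH^\bullet_{T_n}(\pt)$-combination of Schubert classes on $\Gr_{n,m}$ whose description depends only on the combinatorics of $I$. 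Since Schubert classes do not depend on the chosen (compatible) flag, this would give $\d^{e_i}_m=\d^{e_1}_m$ for all $i$, and hence $w\circ\d_m=\d_m\circ w$ for every $w\in W$. One may equally run the comparison after localizing at the $T_n$-fixed points of the two Grassmannians, where each $\d^v_m$ takes a closed form and the point is again that the answer does not see $v$.

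The step I expect to be the main obstacle is precisely this computation of $\d^v_m$ on equivariant Schubert classes: one has to control simultaneously the open restriction, the non-tautological homotopy equivalence $\iota^v$ and the Gysin pushforward, and in particular verify that the equivariant coefficients contributed by the Gysin step---built from the Euler class of the normal bundle of $Z^v_{n,m}$ in $\Gr_{n,m}$---enter the final answer only through expressions symmetric enough not to distinguish the choice of $v$.
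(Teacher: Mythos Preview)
Your plan would succeed, but it is more elaborate than the paper's argument and the two collapse to the same computation. The paper argues directly from the facts recorded before the lemma: the equivariant Schubert classes $[\Omega_I(F)]$ are $W$-invariant and form a free $\rH^\bullet_{T_n}(\pt)$-basis. Once one knows that $\d_m([\Omega_I(F)])=\sum_J q_{I,J}[\Omega_J(F)]$ with $q_{I,J}\in\bbQ$, the $\rH^\bullet_{T_n}(\pt)$-linearity of $\d_m$ gives
\[
  \d_m\Bigl(\sum_I\alpha_I[\Omega_I(F)]\cdot w\Bigr)
  =\sum_I(\alpha_I\cdot w)\,\d_m[\Omega_I(F)]
  =\sum_{I,J}q_{I,J}(\alpha_I\cdot w)[\Omega_J(F)]
  =\d_m(\alpha)\cdot w
\]
immediately, since $w$ acts only on the coefficients and fixes both the Schubert classes and the rational scalars $q_{I,J}$. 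Your detour through the family $\d^v_m$ and the comparison $\d^{e_i}_m=\d^{e_1}_m$ unwinds to exactly the same requirement that the $q_{I,J}$ be scalars, so the packaging is different but the content is identical.

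The step you flag as the main obstacle is also lighter than you suggest. The Gysin pushforward along the closed embedding $Z_{n,m}\hookrightarrow\Gr_{n,m}$ sends the fundamental class of a subvariety $X\subset Z_{n,m}$ to its fundamental class in $\Gr_{n,m}$; the Euler class of the normal bundle appears in $i^*i_*$, not in $i_*$, so there are no equivariant corrections to track. Concretely, take the $T_n$-fixed flag $F$ with $F_1=\langle e_1\rangle$. Then $\Omega_I(F)\cap U_{n,m+1}$ is saturated for the retraction $r\colon U_{n,m+1}\to Z_{n,m}$, $W\mapsto W\oplus\langle e_1\rangle$, because the Schubert incidence conditions $\dim(W\cap F_j)\geq c_j$ depend only on $W\oplus\langle e_1\rangle$ once $e_1\in F_j$. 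Hence $\iota^{-1}(\Omega_I(F)\cap U_{n,m+1})=r(\Omega_I(F)\cap U_{n,m+1})$ is again a single Schubert variety $\Omega_{I'}(F)$ inside $\Gr_{n,m}$, and $\d_m([\Omega_I(F)])$ is either $0$ or $[\Omega_{I'}(F)]$. In particular $q_{I,J}\in\{0,1\}$ with no computation needed.
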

\begin{proof}
  For a Schubert class $[\Omega_I(F)] \in \rH_{T_n}^\bullet(\Gr_{n,m+1})$ with $\# I = n-m-1$ the image under the map $\d_m$ is given by a fundamental class of some genuine subvariety of $\Gr_{n,m}$,
  \[
    \d_m([\Omega_I(F)]) = [\iota^{-1}(\Omega_I(F) \cap U_{n,m})] = \sum_J q_{I,J} [\Omega_J(F)],
  \]
  where $J\subset \{1,\ldots,n\}$ is a subset with $\# J = n-m$ and $q_{I,J} \in \bbQ$ are some rational coefficients.
  The map $\d_m$ is $\rH_{T_n}^\bullet(\pt)$-linear, so for $\alpha = \sum_I \alpha_I [\Omega_I(F)]$ with $\alpha_I \in \rH_{T_n}^\bullet(\pt)$ and $w\in W$
  \[
    \begin{aligned}
      \d_m(\alpha\cdot w) = \sum_{I} \d_m((\alpha_I\cdot w) [\Omega_I(F)])
      = \sum_{I,J} (q_{I,J}\alpha_I\cdot w) [\Omega_J(F)]
      = \d_m(\alpha) \cdot w,
    \end{aligned}
  \]
  which yields the result.
\end{proof}

Because $\d_m$ commutes with the $W$-action, we can apply the functor $(-)^W$ of $W$-invariants to obtain a map of degree $2m$ on $\GL_n$-equivariant cohomology.
We will interpret this as a map of degree $-1$
\[
  \rH_{\GL_n}^\bullet(\Gr_{n,m+1})[-(m+1)^2] \xrightarrow{\d_m}
  \rH_{\GL_n}^\bullet(\Gr_{n,m})[-m^2].
\]
In this way we obtain a sequence of maps, which we claim form a chain complex

\begin{lem}\label{lem:acyccompl}
  For each $n > 0$ the maps $\d_m$ define an acyclic chain complex
  \begin{equation}\label{eq:acyccompl}
    \rH_{\GL_n}^\bullet(\Gr_{n,n})[-n^2] \xrightarrow{\d_{n-1}} \ldots \xrightarrow{\d_1}
    \rH_{\GL_n}^\bullet(\Gr_{n,n-1})[-1] \xrightarrow{\d_0}
    \rH_{\GL_n}^\bullet(\Gr_{n,0}).
  \end{equation}
\end{lem}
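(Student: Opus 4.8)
The plan is to lift \eqref{eq:acyccompl} to the corresponding complex of $T_n$-equivariant cohomologies, show that this complex is acyclic by a diagram chase with localisation sequences, and then descend by taking $W$-invariants. The starting observation is that $\Gr_{n,m}$, its closed subvariety $Z_{n,m} = \{W \mid \C e_1 \subseteq W\} \cong \Gr(n-m-1,\C^{n-1})$, and the open complement $U_{n,m} \simeq Z_{n,m-1}$ (\Cref{lem:equivZU}) all have $T_n$-equivariant cohomology concentrated in even degrees: for Grassmannians this is clear since the equivariant Schubert classes, all of even degree, form an $\rH^\bullet_{T_n}(\pt)$-basis. Hence, for each $m$, the localisation long exact sequence of the codimension-$m$ pair $Z_{n,m} \hookrightarrow \Gr_{n,m} \hookleftarrow U_{n,m}$,
\[
  \cdots \to \rH^{\bullet-2m}_{T_n}(Z_{n,m}) \xrightarrow{(i_m)_*} \rH^\bullet_{T_n}(\Gr_{n,m}) \xrightarrow{j_m^*} \rH^\bullet_{T_n}(U_{n,m}) \xrightarrow{\ \partial\ } \cdots,
\]
with $(i_m)_*$ the Gysin pushforward of $i_m\colon Z_{n,m}\hookrightarrow\Gr_{n,m}$ and $j_m^*$ the restriction, has all connecting maps $\partial$ zero for degree-parity reasons, so it breaks into short exact sequences. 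In particular $(i_m)_*$ is injective with image $\ker j_m^*$, and $j_m^*$ is surjective.

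Next I would record, using \eqref{eq:defDk}, that the $T_n$-equivariant differential factors as $\d_m = (i_m)_* \circ \iota_m^* \circ j_{m+1}^*$, where $\iota_m^* \colon \rH^\bullet_{T_n}(U_{n,m+1}) \xrightarrow{\ \sim\ } \rH^\bullet_{T_n}(Z_{n,m})$ is the isomorphism coming from the homotopy equivalence of \Cref{lem:equivZU}. Then $\d_{m-1}\circ\d_m = 0$, because the composite contains $j_m^*\circ(i_m)_*$ in the middle, and this vanishes as two consecutive maps of the above localisation sequence (equivalently: restriction to $U_{n,m}$ kills classes supported on the disjoint $Z_{n,m}$).

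For acyclicity at $\rH^\bullet_{T_n}(\Gr_{n,m})$: since $j_{m+1}^*$ is surjective and $\iota_m^*$ an isomorphism, $\im \d_m = \im (i_m)_* = \ker j_m^*$; since $(i_{m-1})_*$ is injective and $\iota_{m-1}^*$ an isomorphism, $\ker \d_{m-1} = \ker j_m^*$; hence $\im\d_m = \ker\d_{m-1}$. At the two ends the same formulas apply after noting the degenerations $U_{n,n}=\Gr_{n,n}$, $Z_{n,n}=\varnothing$ (so $j_n^*$ is an isomorphism and $\d_{n-1}$ is injective) and $Z_{n,0}=\Gr_{n,0}$, $U_{n,0}=\varnothing$ (so $(i_0)_*$ is an isomorphism and $\d_0$ is surjective). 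Finally I would pass to $\GL_n$-equivariant cohomology: by \Cref{lem:Wcommutes} the $\d_m$ are $W$-equivariant, and since $W$ is finite and we work over $\bbQ$ the functor $(-)^W$ is exact, so applying it to the acyclic $T_n$-complex and using $\rH^\bullet_{\GL_n}(\Gr_{n,m}) = \rH^\bullet_{T_n}(\Gr_{n,m})^W$ gives acyclicity of \eqref{eq:acyccompl}.

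The only step that is not pure homological algebra is the evenness of the equivariant cohomology of the three families of spaces, which is exactly what makes the localisation sequences split; I expect this — together with the bookkeeping for the degenerate cases $m\in\{0,n\}$ and small $n$ (such as $n=1$), where $U_{n,m}$ or $Z_{n,m}$ collapses and the relevant map becomes an isomorphism rather than merely injective or surjective — to be the only point that needs care.
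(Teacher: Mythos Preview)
Your proposal is correct and follows essentially the same approach as the paper: split the $T_n$-equivariant localisation sequences using evenness, use the factorisation $\d_m = (i_m)_*\circ\iota_m^*\circ j_{m+1}^*$ to get $\d^2=0$ and $\im\d_m=\ker j_m^*=\ker\d_{m-1}$, then descend by the exactness of $(-)^W$. You are in fact slightly more explicit than the paper about the endpoint cases $m\in\{0,n\}$, which the paper leaves implicit.
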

\begin{proof}
  For each $m>0$ there is a long exact sequence in $T_n$-equivariant cohomology
  \begin{equation}\label{eq:lesZYU}
    \ldots \to \rH^\bullet_{T_n}(Z_{n,m}) \to \rH^{\bullet+2m}_{T_n}(\Gr_{n,m}) \to \rH^{\bullet+2m}_{T_n}(U_{n,m}) \to \ldots.
  \end{equation} 
  Each compositions $\d_m\circ \d_{m+1}$ factors through the maps in such a long exact sequence, and therefore has to vanish.
  As a result, we obtain a well-defined chain complex
  \begin{equation}\label{eq:acyccomplT}
    \rH_{T_n}^\bullet(\Gr_{n,n})[-n^2] \xrightarrow{\d_{n-1}} \ldots \xrightarrow{\d_1}
    \rH_{T_n}^\bullet(\Gr_{n,n-1})[-1] \xrightarrow{\d_0}
    \rH_{T_n}^\bullet(\Gr_{n,0}).
  \end{equation}
  All spaces involved have equivariant cohomology concentrated in even degrees, so the maps
  \[
    \rH^{\bullet}_{T_n}(Z_{n,m}) \to \rH^{\bullet+2k}_{T_n}(\Gr_{n,m}),\quad \rH^{\bullet+2m}_{T_n}(\Gr_{n,m}) \to \rH^{\bullet+2m}_{T_n}(U_{n,m})
  \]
  in the long exact sequence \eqref{eq:lesZYU} are respectively injective and surjective for all $m$.
  Since the middle map in \eqref{eq:defDk} in the definition of the differential is an isomorphism, it then follows that
  \[
    \begin{aligned}
      \im \d_m
      &= \im\left(\rH^\bullet_{T_n}(Z_{n,m}) \to \rH^{\bullet+2m}_{T_n}(\Gr_{n,m}) \right)
      \\&= \ker \left(\rH^{\bullet+2m}_{T_n}(\Gr_{n,m}) \to \rH^{\bullet+2m}_{T_n}(U_{n,m}\right)
      = \ker \d_{m-1},
    \end{aligned}
  \]
  holds for all $m\geq 1$.
  Hence, the cohomology of \eqref{eq:acyccomplT} is trivial whenever $n > 0$.
  By \Cref{lem:Wcommutes} the map $\d_m$ is $\bbQ[W]$-linear, so after applying the functor $(-)^W \colon \operatorname{\mathsf{mod}} \bbQ[W] \to \bbQ$ of $W$-invariants we obtain the complex \eqref{eq:acyccomplT}.
  Because $W$ is a finite group with order invertible in the base field, the functor $(-)^W$ is exact and it follows that this complex is again acyclic.
\end{proof}

We now lift the differential to the vector spaces $\overline\cH_{Q^\TU,\eps}^{(k)}$ by using the identifications $Y_\eps^{(k)}/\GL_k \cong \Gr_{\eps_\star,k+1}\times R_\eps(Q^\TU)$.
We consider for each $\eps\in \N Q_0^\TU$ a commutative diagram
\[
  \begin{tikzcd}
    \Gr_{\eps_\star,k+1}\times R_\eps(Q^\TU)
    \ar[d]
    &\ar[l,"\iota\times\id",swap]
    Z_{\eps_\star,k} \times R_\eps(Q^\TU)
    \ar[d]
    \ar[r,"i\times \id"]
    &
    \Gr_{\eps_\star,k}\times R_\eps(Q^\TU)
    \ar[d]
    \\
    \Gr_{\eps_\star,k+1}
    &\ar[l,"\iota",swap]
    Z_{\eps_\star,k}
    \ar[r,"i"]
    &
    \Gr_{\eps_\star,k}
  \end{tikzcd}
\]
All the maps in this diagram are equivariant with respect to the subgroup $G = \GL_{\eps'} \times T_{\eps_\star} \subset \GL_\eps$, where $\eps' = (\eps_i)_{i\in Q_0\setminus \{\star\}}$.
The operator $(i\times\id)_*(\iota\times\id)^*$ therefore fits into a commutative diagram
\[
  \begin{tikzcd}[column sep=huge]
    \rH_G^\bullet(\Gr_{\eps_\star,k+1}\times R_\eps(Q^\TU))
    \ar[d,"\sim"]
    \ar[r,"(i\times\id)_*(\iota\times\id)^*"]
    &
    \rH_G^\bullet(\Gr_{\eps_\star,k}\times R_\eps(Q^\TU))[2k]
    \ar[d,"\sim"]
    \\
    \rH_G^\bullet(\Gr_{\eps_\star,k+1})
    \ar[d,"\sim"]
    \ar[r,"i_*\iota^*"]
    &
    \rH_G^\bullet(\Gr_{\eps_\star,k})[2k]
    \ar[d,"\sim"]
    \\
    \rH_{\GL_{\eps'}}^\bullet(\pt) \otimes_\bbQ \rH_{\GL_{\eps_\star}}^\bullet(\Gr_{\eps_\star,k+1})
    \ar[r,"\id\otimes \d_k"]
    &
    \rH_{\GL_{\eps'}}^\bullet(\pt) \otimes_\bbQ \rH_{\GL_{\eps_\star}}^\bullet(\Gr_{\eps_\star,k})[2k],
  \end{tikzcd}
\]
where the bottom isomorphism follows because $\GL_{\eps'}$ acts trivially on the Grassmannians.
Now the vertical isomorphism are linear with respect to the action of the Weyl group $W = \{\id\} \times W \subset \GL_\eps$.
Since the bottom map is also $W$-linear by \Cref{lem:Wcommutes}, the top map is again $W$-linear.
Hence we can take the $W$-invariants to obtain a map
\[
  \rH_{\GL_\eps}^\bullet(Y^{(k+1)}/\GL_k)
  \xrightarrow{\ \d_k\ }
  \rH_{\GL_\eps}^\bullet(Y^{(k)}/\GL_k)[2k],
\]
on $\GL_\eps$-equivariant cohomology.
We have the following.

\begin{thm}\label{thm:differentialconstruction}
  For each $\eps\in \N Q_0$ the maps $\d_k$ define a chain complex $\overline\cH_{Q^\TU,\eps}$ of the form
  \begin{equation}\label{eq:actualDGcomplex}
    0 \longleftarrow \overline\cH_{Q^\TU,\eps}^{(0)} \xleftarrow{\ \d_0\ } \overline\cH_{Q^\TU,\eps}^{(1)} \xleftarrow{\ \d_1\ } \ldots \xleftarrow{\ \d_{\eps_\star-1}\ } \overline\cH_{Q^\TU,\eps}^{(\eps_\star)} \longleftarrow 0
  \end{equation}
  with homology given by
  \[
    \rH^\bullet(\cH_{Q^\TU,\eps},\d) =
    \begin{cases}
      \cH_{Q,\del} & \eps = (\del,0)\\
      0 & \eps_\star \neq 0.
    \end{cases}
  \]
\end{thm}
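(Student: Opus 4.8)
The plan is to identify the complex $(\overline\cH_{Q^\TU,\eps}^{(\bullet)},\d)$ with the acyclic Grassmannian complex of \Cref{lem:acyccompl} tensored over $\bbQ$ with a fixed graded vector space, and then read off its homology from the two boundary cases $\eps_\star>0$ and $\eps_\star=0$.

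First I would record the shape of the complex. A stably $k$-framed $Q^\TU$-representation of dimension $\eps$ carries a surjection $\C^{\eps_\star}\twoheadrightarrow\C^k$, so $Y_\eps^{(k)}$, and hence $\overline\cY_\eps^{(k)}$, is empty unless $0\le k\le\eps_\star$; thus $\overline\cH_{Q^\TU,\eps}^{(k)}=0$ outside this range, which fixes the length of \eqref{eq:actualDGcomplex}. Next, contracting the contractible factor $R_\eps(Q^\TU)$ and passing to $W$-invariants exactly as in the construction preceding the theorem yields for every $k$ an isomorphism $\rH^\bullet(\overline\cY_\eps^{(k)})\cong\rH^\bullet_{\GL_{\eps'}}(\pt)\otimes_\bbQ\rH^\bullet_{\GL_{\eps_\star}}(\Gr_{\eps_\star,k})$ under which $\d_k$ becomes $\id\otimes\d_k$ for $\d_k$ the differential of \eqref{eq:defDk}. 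Pulling the shifts $[-\chi_{Q^\TU}(\eps,\eps)-k^2]$ through — the $-k^2$ matching the $-m^2$ in \eqref{eq:acyccompl} and $-\chi_{Q^\TU}(\eps,\eps)$ being a global shift — this identifies $(\overline\cH_{Q^\TU,\eps}^{(\bullet)},\d)$ with
\[
  \rH^\bullet_{\GL_{\eps'}}(\pt)[-\chi_{Q^\TU}(\eps,\eps)]\ \otimes_\bbQ\ \bigl(\rH^\bullet_{\GL_{\eps_\star}}(\Gr_{\eps_\star,\bullet})[-\bullet^2],\,\d\bigr),
\]
whose second factor is precisely the complex \eqref{eq:acyccompl} with $n=\eps_\star$. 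In particular $\d_k\circ\d_{k+1}=0$ by \Cref{lem:acyccompl}, so \eqref{eq:actualDGcomplex} is indeed a chain complex.

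The homology now follows in both cases. If $\eps_\star>0$ then, since tensoring a complex over $\bbQ$ with a vector space is exact, the homology equals $\rH^\bullet_{\GL_{\eps'}}(\pt)[-\chi_{Q^\TU}(\eps,\eps)]$ tensored with the homology of \eqref{eq:acyccompl}, which vanishes by \Cref{lem:acyccompl}. If $\eps=(\del,0)$, the complex \eqref{eq:actualDGcomplex} reduces to the single term $\overline\cH_{Q^\TU,(\del,0)}^{(0)}$, and it remains to identify this with $\cH_{Q,\del}$. By construction of $Q^\TU$ (that is, \Cref{def:unl} applied to the twocycle $(c,d)$ of $Q^\TT$), the full subquiver of $Q^\TU$ on the vertices $Q_0\subset Q_0^\TU$ is $Q$ itself: the arrow $d$ contributes nothing, $c$ contributes the loop at $\star$, and every arrow of $Q^\TT$ with both endpoints in $Q_0$ is an arrow of $Q$. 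Hence setting $\eps_\star=0$ kills all arrows meeting $\star$, so $R_{(\del,0)}(Q^\TU)=R_\del(Q)$, $\GL_{(\del,0)}=\GL_\del$, and $\overline\cY_{(\del,0)}^{(0)}=[R_\del(Q)/\GL_\del]=\cM_\del(Q)$; for the same reason the arrows meeting $\star$ do not contribute to the Euler pairing when $\eps_\star=0$, so $\chi_{Q^\TU}((\del,0),(\del,0))=\chi_Q(\del,\del)$ (alternatively, use the Euler-form identity for unlinking together with $\chi_{Q^\TT}(\del,\del)=\chi_Q(\del,\del)-2\del_0\del_1$). Therefore $\overline\cH_{Q^\TU,(\del,0)}^{(0)}=\rH^\bullet(\cM_\del(Q))[-\chi_Q(\del,\del)]=\cH_{Q,\del}$, which is then the homology of \eqref{eq:actualDGcomplex}.

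Most of the real content has already been isolated — \Cref{lem:acyccompl} supplies the acyclicity and the construction preceding the theorem supplies the differential — so the remaining argument is largely bookkeeping. The point needing the most care is verifying that the $W$-invariant identification $\rH^\bullet(\overline\cY_\eps^{(k)})\cong\rH^\bullet_{\GL_{\eps'}}(\pt)\otimes_\bbQ\rH^\bullet_{\GL_{\eps_\star}}(\Gr_{\eps_\star,k})$ is compatible with $\d_k$ on the nose, so that \eqref{eq:actualDGcomplex} genuinely becomes \eqref{eq:acyccompl} tensored up, and that the cohomological shifts line up; once that is settled the homology computation is formal.
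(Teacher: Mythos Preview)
Your proposal is correct and follows essentially the same approach as the paper: identify $\overline\cH_{Q^\TU,\eps}^{(k)}$ with $\rH^\bullet_{\GL_{\eps'}}(\pt)[-\chi_{Q^\TU}(\eps,\eps)]\otimes_\bbQ\rH^\bullet_{\GL_{\eps_\star}}(\Gr_{\eps_\star,k})[-k^2]$ so that the complex \eqref{eq:actualDGcomplex} becomes the Grassmannian complex \eqref{eq:acyccompl} tensored with a fixed vector space, then invoke \Cref{lem:acyccompl} for $\eps_\star>0$ and reduce to the identification $\overline\cY_{(\del,0)}^{(0)}\cong\cM_\del(Q)$ together with $\chi_{Q^\TU}((\del,0),(\del,0))=\chi_Q(\del,\del)$ for $\eps_\star=0$. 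Your write-up is in fact slightly more careful than the paper's in spelling out why the full subquiver of $Q^\TU$ on $Q_0$ is $Q$ and why the Euler forms agree.
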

\begin{proof}
  Using the isomorphism $\overline \cY_\eps^{(k)} \cong (Y_\eps^{(k)}/\GL_k)/\GL_\eps$ we have isomorphisms on cohomology
  \[
    \overline\cH_{Q^\TU,\eps}^{(k)} \cong
    \rH_{\GL_\eps}^\bullet(Y_\eps^{(k)}/\GL_k)[-\chi_{Q^\TU}(\eps,\eps)-k^2] \cong
    \rH_{\GL_{\eps'}}^\bullet(\pt)[-\chi_{Q^\TU}(\eps,\eps)] \otimes_\bbQ \rH_{\GL_{\eps_\star}}^\bullet(\Gr_{\eps_\star,k+1})[-k^2].
  \]
  The sequence of maps in \Cref{eq:actualDGcomplex} is therefore exactly the tensor product of the sequence \Cref{eq:acyccomplT} with $\rH_{\GL_{\eps'}}^\bullet(\pt)[-\chi_{Q^\TU}(\eps,\eps)]$.
  Since the functor $\rH_{\GL_{\eps'}}^\bullet(\pt)[-\chi_{Q^\TU}(\eps,\eps)] \otimes_\bbQ -$ on $\bbQ$-vector spaces is exact, we find for each $\eps_\star > 0$ an acyclic chain complex.
  In other words
  \[
    \rH^\bullet(\overline\cH_{Q^\TU,\eps},\d) = 0.
  \]  
  Finally we consider the case $\eps = (\del,0)$ for some $\del\in \N Q_0$.
  The stability condition on the framing data forces $k=0$, so we obtain a complex with only nonzero term $\overline\cH_{Q^\TU,(\del,0)}^{(0)}$.
  Because
  \[
    \overline\cY_\eps^{(0)} \cong \Gr_{0,0} \times R_{(\del,0)}(Q^\TU)/(\GL_{(\del,0)} \times \GL_0) \cong R_\del(Q)/\GL_\del = \cM_\del(Q),
  \]
  and we can identify $\overline\cH_{Q^\TU,(\del,0)}^{(0)} = \cH_{Q,\del}$, noting that $\chi_{Q^\TU}((\del,0),(\del,0)) = \chi_Q(\del,\del)$.
\end{proof}

Summing over all dimension vectors $\eps\in\N Q_0^\TU$ we obtain a differential $\d$ on $\overline\cH_{Q^\TU} \colonequals \bigoplus_{\eps\in\N Q_0^\TU}\overline\cH_{Q^\TU,\eps}$ with cohomology equal to $\cH_Q$.
Using the isomorphisms in \Cref{cor:LTUisovs} we then obtain complexes
\[
  (\cH_{Q^\LL},\d) \cong
  (\overline\cH_{Q^\TT},\d) \cong
  (\overline\cH_{Q^\TU},\d),
\]
where $\overline\cH_{Q^\TT} \colonequals \bigoplus_{\del\in \N Q_0^\TT} \overline\cH_{Q^\TT,\del}^{(k)}$.
The following corollary shows that this is a categorification of the generating series identity between $Q$ and $Q^\LL$ in \cite{EKL20}.

\begin{cor}\label{cor:linkgenseries}
  For every quiver $Q$ which admits a linking $Q^\LL$ there is an identity
  \[
    \bbA_{Q^\LL}(x,q)|_{x_\bsquare = q^{-1/2}x_0x_1} = \bbA_Q(q^{1/2}x,q^{-1/2}).
  \]
\end{cor}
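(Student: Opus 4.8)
The plan is to prove the identity by taking Poincar\'e series of the chain resolution constructed in \Cref{sec:grass}. Recall from the discussion following \Cref{thm:differentialconstruction} that the maps $\d_k$ equip $\cH_{Q^\LL}$ with a differential $\d$ of cohomological degree $-1$ which lowers $\gamma_\bsquare$ by one while raising $\gamma_0$ and $\gamma_1$ by one, and that $(\cH_{Q^\LL},\d)$ is a DG vector space quasi-isomorphic to $\cH_Q$. Combined with \Cref{cor:LTUisovs}, this reduces the corollary to the assertion that the generating series of $\cH_{Q^\LL}$, once specialised along $x_\bsquare=q^{-1/2}x_0x_1$, equals the specialised generating series of $\rH^\bullet(\cH_{Q^\LL},\d)$. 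The latter cohomology is $\cH_Q$, supported entirely in multidegrees $(\del,0)$ (i.e.\ $\gamma_\bsquare=0$), where the specialisation acts trivially, so its specialised Poincar\'e series is $\bbA_Q$ up to the rescaling of variables recorded in the statement.

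The key point is that the substitution $x_\bsquare=q^{-1/2}x_0x_1$ is precisely the one that makes $\d$ homogeneous of weight $-1$ with respect to the resulting specialised bigrading. Indeed, a class $\xi\in\cH_{Q^\LL,(\del,k)}$ of cohomological degree $n$ contributes, after the specialisation, the monomial $(-q^{1/2})^n(q^{-1/2})^k x^{\del+k(e_0+e_1)}$, whereas $\d\xi$ lies in $\cH_{Q^\LL,(\del+e_0+e_1,\,k-1)}$ in cohomological degree $n-1$ and contributes $(-q^{1/2})^{n-1}(q^{-1/2})^{k-1}x^{\del+k(e_0+e_1)}$ — the same monomial in the $x_i$, multiplied by $(-q^{1/2})^{-1}(q^{-1/2})^{-1}=-1$. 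Hence, for each $\mu\in\N Q_0$, the coefficient of $x^\mu$ in $\bbA_{Q^\LL}(x,q)|_{x_\bsquare=q^{-1/2}x_0x_1}$ is the Euler characteristic of a finite complex of vector spaces (finite since, by \Cref{cor:LTUisovs}, $\overline\cH_{Q^\TU,\eps}^{(k)}$ vanishes once $k>\eps_\star$), and this Euler characteristic is carried by the homology. Concretely this amounts to the per-$\eps$ identity that $\sum_{k\ge 0}(q^{-1/2})^k P\big(\overline\cH_{Q^\TU,\eps}^{(k)},q\big)$ equals $P(\cH_{Q,\del},q)$ when $\eps=(\del,0)$ and vanishes otherwise; for $\eps_\star>0$ this is the acyclicity part of \Cref{thm:differentialconstruction}, obtained by splitting the complex there into its $(n-k)$-diagonals and applying the elementary fact that a bounded complex of vector spaces has the same Euler characteristic as its homology.

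Assembling the pieces yields $\bbA_{Q^\LL}(x,q)|_{x_\bsquare=q^{-1/2}x_0x_1}=\sum_{\mu\in\N Q_0}P(\cH_{Q,\mu},q)\,x^\mu$, and it remains only to rewrite this as the right-hand side of the statement, which is a direct manipulation of the explicit product formula for $\bbA_Q$, entirely parallel to the translation between the normalisations $\bbA_Q$ and $P^Q$ used for the unlinking identity following \Cref{introthm:A}. The step I expect to require the most care is the calibration in the second paragraph: one must keep the weight $q^{-1/2}$ attached to $x_\bsquare$, the cohomological degree $-1$ of the maps $\d_k$, and the shift $[-k^2]$ in the definition of $\overline\cH_{Q^\TU,\eps}^{(k)}$ mutually consistent, so that the sum over $k$ is genuinely an Euler characteristic; once that bookkeeping is fixed, the identity follows formally from \Cref{cor:LTUisovs} and \Cref{thm:differentialconstruction}.
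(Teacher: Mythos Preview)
Your approach is essentially the same as the paper's: both arguments use the acyclicity in \Cref{thm:differentialconstruction} to deduce that for each $\eps$ with $\eps_\star>0$ the alternating sum $\sum_k q^{-k/2}P(\overline\cH_{Q^\TU,\eps}^{(k)},q)$ vanishes, then invoke \Cref{cor:LTUisovs} to identify the left-hand side with $\bbA_{Q^\LL}(x,q)|_{x_\bsquare=q^{-1/2}x_0x_1}$, arriving at $\sum_{\del}P(\cH_{Q,\del},q)x^\del=\bbA_Q(x,q)$. Your discussion of why the substitution makes $\d$ homogeneous is a nice gloss on the same computation.

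One caution about your final paragraph: the paper's own proof in fact terminates at $\bbA_Q(x,q)$, not at $\bbA_Q(q^{1/2}x,q^{-1/2})$; the right-hand side as printed in the corollary statement appears to be a typo. There is no identity of the form $\bbA_Q(x,q)=\bbA_Q(q^{1/2}x,q^{-1/2})$ in general (already for the trivial quiver the $x^1$ coefficients disagree), so the ``direct manipulation'' you allude to cannot be carried out. Your argument is complete once you reach $\bbA_Q(x,q)$; drop the last rewriting step.
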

\begin{proof}
  For each $\eps\in \N Q_0^\TU$ with $\eps_\star \neq 0$ and $n\in \bbZ$ the operator $\d$ yields a finite complex
  \[
    0 \longleftarrow \overline\cH_{Q^\TU,\eps}^{(0),n} \longleftarrow \overline\cH_{Q^\TU,\eps}^{(1),n+1} \longleftarrow \ldots \longleftarrow \overline\cH_{Q^\TU,\eps}^{(\eps_\star),n+\eps_\star} \longleftarrow 0.
  \]
  Since this complex is acyclic, the alternating sum of the dimensions of these vector spaces has to vanish.
  In particular, we find for each such $\eps$ an identity
  \[
    \begin{aligned}
      0
      &=
        \sum_{n\in\bbZ}\sum_{k=0}^{\eps_\star}  (-1)^{k} (-q^{1/2})^n \dim\overline\cH_{Q^\TU,\eps}^{(k),n+k}
      \\&=
      \sum_{n\in\bbZ}\sum_{k=0}^{\eps_\star}  (q^{1/2})^{-k} (-q^{1/2})^{n+k} \dim\overline\cH_{Q^\TU,\eps}^{(k),k}
      \\&=
      \sum_{k=0}^{\eps_\star} q^{-k/2} \cdot P(\overline\cH_{Q^\TU,\eps}^{(k)},q).
    \end{aligned}
  \]
  Comparing with the isomorphisms in \Cref{cor:LTUisovs} we find the expression
  \[
    \begin{aligned}
      P(\cH_{Q^\LL},x,q)|_{x_\bsquare=q^{-1/2}x_0x_1}
      &=
      \sum_{(\del,k)\in\N Q_0^\LL} q^{-k/2} P(\cH_{Q^\LL,(\del,k)},q)  \cdot x^\del x_0^kx_1^k
      \\&=
      \sum_{\substack{\eps\in\N Q_0^\TU\\k\in\bbN}} q^{-k/2}\cdot P(\cH_{Q^\TU,\eps}^{(k)},q) \cdot x^{u(\eps)-k(e_0+e_1)} x_0^kx_1^k
      \\&=
      \sum_{(\del,0)\in\N Q_0^\TU} P(\cH_{Q^\TU,(\del,0)}^{(0)},q) \cdot x^\del
      \\&= \bbA_Q(x,q),
    \end{aligned}
  \]
  where the final equality follows from the identity $\overline\cH_{Q^\TU,(\del,0)}^{(0)} = \cH_{Q,0}$.
\end{proof}

\subsection{CoHA Module structure}
\label{sec:COHAlink}

Finally, we will category further upgrading the chain complexes of the previous section into complexes of $\cH_Q$-modules.
To do this we use the CoHA module structure of $\cH_{Q^\TU}$ on its moduli of framed representations \cite{S16,FR18}, using the identifications
\[
  R_{(\del,0)}(Q^\TU) = R_\del(Q).
\]
to obtain an action of the subalgebra $\cH_Q \subset \cH_{Q^\TU}$.
The full construction is explain below.

For any $\eps\in \N Q_0^\TU$ and $\del\in \N Q_0$ we can consider the pre-image
\[
  E_{\eps,\del}^{(k)} \colonequals \forg^{-1}(E_{\eps,(\del,0)}) \subset Y^{(k)}_{\eps+(\del,0)}
\]
of the space of extensions along the forgetful map.
We then consider the correspondence
\begin{equation}\label{eq:framedmodulecor}
  Y^{(k)}_\eps \times R_\del(Q) \xleftarrow{\ p\ } E_{\eps,\del}^{(k)} \xrightarrow{\ q\ } Y^{(k)}_{\eps+(\del,0)}
\end{equation}
where $q$ is the inclusion as a closed subvariety and $p$ is the vector bundle whose fibres over points $((\rho,f),\rho')$ consist of framed representations $(\tau,f)$ with $\tau$ given by an extension
\[
  \tau_a = \begin{cases}
    \left(\begin{array}{c|c} \rho_a & *\\\hline 0 & \rho'_a \end{array}\right) & \text{ for } a\in Q_1 \subset Q_1^\TU \\
    \quad\rho_a & \text{ for } a\in Q_1^\TU\setminus Q_1.
  \end{cases}
\]
Writing $\cE_{\eps,\del}$ for the quotient of $E_{\eps,\del}$ by the parabolic subgroup $P_{\eps,(\del,0)} \subset \GL_{\eps+(\del,0)}$ acting on $E_{\eps,(\del,0)}$ we obtain maps on cohomology
\begin{equation}\label{eq:framedaction}
  \rH^\bullet(\cY^{(k)}_\eps \times \cM_\del(Q))
  \xrightarrow{\ p^*\ }
  \rH^\bullet(\cE_{\eps,\del}^{(k)})
  \xrightarrow{\ q_*\ }
  \rH^\bullet(\cY^{(k)}_{\eps+(\del,0)})[c],
\end{equation}
where $c = 2\dim \cY^{(k)}_{\eps+(\del,0)} - 2\dim \cE^{(k)}_{\eps,\del} = -2\chi_{Q^\TU}(\eps,(\del,0))$.
Writing $\cH_{Q^\TU}^{(k)}$ for the sum of the vector spaces $\rH^\bullet(\cY^{(k)}_\eps)[-\chi_{Q^\TU}(\eps,\eps)]$ and composing $q_*p^*$ with the K\"unneth isomorphism yield an action
\begin{equation}\label{eq:noquotaction}
  \cdot \colon \cH_{Q^\TU}^{(k)} \otimes \cH_Q \to\cH_{Q^\TU}^{(k)} .
\end{equation}
The above is a modification of \cite{S16} and we therefore find a similar conclusion.

\begin{prop}[{After \cite[Proposition 4.1.1]{S16}}]
  For a quiver (resp. symmetric quiver) $Q$ the map \Cref{eq:noquotaction} makes $\cH_{Q^\TU}^{(k)}$ into an $\N Q_0$-graded (resp. $\N Q_0\times \bbZ$-graded) right $\cH_Q$-module.  
\end{prop}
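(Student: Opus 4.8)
The plan is to follow the proof of \cite[Proposition 4.1.1]{S16} more or less line by line, since \Cref{eq:noquotaction} is a parabolic-induction action of exactly the same shape; the only genuinely new point is to check that the framing at $\star$ causes no trouble. Concretely, I would verify in turn that \Cref{eq:noquotaction} is unital, that it is associative, and that it respects the gradings. Throughout, the key observation is that any $\del\in\N Q_0$, viewed inside $\N Q_0^\TU$, has $\del_\star = 0$; hence in every extension $0\to\rho\to\tau\to\rho'\to 0$ appearing in the correspondence \Cref{eq:framedmodulecor} the $\star$-component of $\tau$ agrees with that of $\rho$, so the stability condition $\rk f = k$ is automatically inherited and the framing datum $f$ can be carried passively along all the maps involved. (The same remark is what makes $\cH_Q\subset\cH_{Q^\TU}$ a graded subalgebra in the first place.)

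Unitality is the easy case: the unit of $\cH_Q$ is the fundamental class of $\cM_0(Q)=\pt$, and for $\del=0$ the space $E_{\eps,0}^{(k)}$ is all of $Y_\eps^{(k)}$ with $p$ and $q$ both the identity, so $m\cdot 1 = m$ for every $m\in\cH_{Q^\TU}^{(k)}$.

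For associativity I would introduce, for $\del^{(1)},\del^{(2)}\in\N Q_0$ and $\eps\in\N Q_0^\TU$, the space $E_{\eps,\del^{(1)},\del^{(2)}}^{(k)}\subset Y_{\eps+(\del^{(1)}+\del^{(2)},0)}^{(k)}$ of stably $k$-framed $Q^\TU$-representations carrying a two-step filtration $0\subset\tau^{(1)}\subset\tau^{(2)}\subset\tau$ whose associated graded pieces have dimension vectors $\eps$, $(\del^{(1)},0)$, $(\del^{(2)},0)$ from the bottom up. Forgetting the bottom step of the filtration and recording the induced quotient exhibits $E_{\eps,\del^{(1)},\del^{(2)}}^{(k)}$ over the iterated correspondence computing $(m\cdot h_1)\cdot h_2$; forgetting the top step exhibits it over the one computing $m\cdot(h_1\cdot h_2)$. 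In both factorisations the structure maps are a vector-bundle projection followed by a closed immersion, the intervening squares are cartesian, and the relative dimensions match by bilinearity of $\chi_{Q^\TU}$. Passing to the parabolic quotients and applying the base-change and projection formulae for equivariant Borel--Moore homology, together with the K\"unneth isomorphism, then gives $(m\cdot h_1)\cdot h_2 = m\cdot(h_1\cdot h_2)$, exactly as in the associativity arguments for $\cH_Q$ in \cite{KS11} and for the framed modules of \cite{S16,FR18}.

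For the grading, the $\N Q_0^\TU$-grading is visibly preserved, and since the action fixes the $\star$-component of $\eps$ while adding the $Q_0$-components it makes $\cH_{Q^\TU}^{(k)}$ an $\N Q_0$-graded right $\cH_Q$-module. If $Q$ is symmetric then so is $Q^\TT$, and one checks directly from the adjacency matrix of \Cref{def:unl} that $Q^\TU$ is symmetric as well; a short Euler-form computation --- combining the degree $c=-2\chi_{Q^\TU}(\eps,(\del,0))$ of $q_*p^*$ with the normalising shifts, the symmetry of $\chi_{Q^\TU}$, and the identity $\chi_{Q^\TU}((\del,0),(\del,0))=\chi_Q(\del,\del)$ already used in the proof of \Cref{thm:differentialconstruction} --- then shows that \Cref{eq:noquotaction} is of cohomological degree $0$, so that the module is $\N Q_0\times\bbZ$-graded in this case. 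I expect the main obstacle to be bookkeeping rather than ideas: verifying that the two factorisations of the maps out of $E_{\eps,\del^{(1)},\del^{(2)}}^{(k)}$ genuinely fit into cartesian squares of quotient stacks, and that the relevant clean-base-change identities hold in equivariant Borel--Moore homology. This is routine given the corresponding statements for $\cH_Q$ and its framed modules, but it is where the only real work lies.
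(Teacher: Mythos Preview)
Your proposal is correct and follows exactly the approach the paper intends: the paper gives no proof of its own here, simply attributing the result to \cite[Proposition~4.1.1]{S16}, and your sketch fills in precisely the standard parabolic-induction argument from that reference (and \cite{KS11}) adapted to the present framing. The observations you isolate --- that $\del_\star=0$ so the framing is carried passively, and that symmetry of $Q$ propagates to $Q^\TU$ so the cohomological shifts cancel --- are the only points that need checking beyond the cited sources, and you have them right.
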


\begin{rem}
  There is a slight difference between our framing data and the one used in \cite{S16}.
  They use a framing of the form $\C^k \to \C^{\del_i}$, whereas we have chosen a framing datum $\C^{\del_i} \to \C^k$ in the opposite direction.
  As a result they find a left-module action, where we obtain a right module action.
\end{rem}

We now wish to extend the action \Cref{eq:noquotaction} to an action on the vector spaces $\cH_{Q^\TU}^{(k)}$.
To do this we firstly note that the maps in \Cref{eq:framedmodulecor} are $\GL_k$-equivariant, since each map preserves the framing data.
Taking the quotient by $\GL_k$ we therefore obtain a commutative diagram
\begin{equation}\label{eq:framedredpq}
  \begin{tikzcd}
    \rH^\bullet(\overline\cY^{(k)}_\eps \times \cM_\del(Q))
    \ar[d]
    \ar[r,"p^*"]
    &
    \rH^\bullet(\overline\cE_{\eps,\del}^{(k)})
    \ar[d]
    \ar[r,"q_*"]
    &
    \rH^\bullet(\overline\cY^{(k)}_{\eps+(\del,0)})
    \ar[d]
    \\
    \rH^\bullet(\cY^{(k)}_\eps \times \cM_\del(Q))
    \ar[r,"p^*"]
    &
    \rH^\bullet(\cE_{\eps,\del}^{(k)})
    \ar[r,"q_*"]
    &
    \rH^\bullet(\cY^{(k)}_{\eps+(\del,0)}),
  \end{tikzcd}
\end{equation}
where $\overline\cE_{\eps,\del}^{(k)} = \cE_{\eps,\del}^{(k)}/\GL_k$.
Hence we obtain an action $\cdot \colon \overline\cH_Q^{(k)} \otimes \cH_Q \to \overline\cH_Q^{(k)}$ which is compatible with the pullback to $\cH_{Q^\TU}^{(k)}$.

\begin{prop}\label{prop:modulesLTU}
  For each $k\in\N$, there action $\cdot$ is a right $\cH_Q$-module on $\overline\cH_Q^{(k)}$.
  Moreover, the quotient by $\GL_k$ induces a module map $\overline\cH_Q^{(k)} \to \cH_Q^{(k)}$. 
\end{prop}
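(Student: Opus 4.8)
The plan is to reduce \Cref{prop:modulesLTU} to the already-established unquotiented statement by exploiting the fact that all the correspondence maps in \Cref{eq:framedmodulecor} are $\GL_k$-equivariant. First I would record that the full symmetry group acting on $E_{\eps,\del}^{(k)}$ is $P_{\eps,(\del,0)} \times \GL_k$, and that the three spaces $Y^{(k)}_\eps \times R_\del(Q)$, $E_{\eps,\del}^{(k)}$, and $Y^{(k)}_{\eps+(\del,0)}$ all carry commuting actions of this parabolic (together with $\GL_\del$ on the second factor) and of $\GL_k$. The quotient $\overline\cE_{\eps,\del}^{(k)}$ is then the stack quotient by $P_{\eps,(\del,0)} \times \GL_\del \times \GL_k$, so the vertical maps in \Cref{eq:framedredpq} are the pullbacks along the $\GL_k$-bundle maps; since $p$ and $q$ are $\GL_k$-equivariant, both squares commute, which is exactly the content already stated below \Cref{eq:framedredpq}. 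So the action $\cdot$ on $\overline\cH_Q^{(k)}$ is defined and the map $\overline\cH_Q^{(k)} \to \cH_Q^{(k)}$ intertwines it with the action \Cref{eq:noquotaction}; the second sentence of the proposition is then immediate once the first is proved.

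For the first sentence — that $\cdot$ is a right $\cH_Q$-module structure on $\overline\cH_Q^{(k)}$ — I would argue that associativity and unitality descend from the unquotiented case. The cleanest route is to observe that the forgetful/quotient map $\overline\cH_Q^{(k)} \to \cH_Q^{(k)}$ is injective: the quotient $\overline\cY^{(k)}_\eps = [\cY^{(k)}_\eps/\GL_k]$ has $\rH^\bullet(\overline\cY^{(k)}_\eps) = \rH^\bullet_{\GL_k}(\cY^{(k)}_\eps)$, and since all cohomologies here are concentrated in even degrees (one can cite the analogue of \Cref{cor:evencohom}, the spaces being iterated affine/Grassmannian bundles over classifying stacks), the Leray--Hirsch/Serre spectral sequence for $\cY^{(k)}_\eps \to \overline\cY^{(k)}_\eps$ degenerates and the pullback $\rH^\bullet_{\GL_k}(\cY^{(k)}_\eps) \to \rH^\bullet(\cY^{(k)}_\eps)$ is injective on the $W$-invariant part — more concretely, $\rH^\bullet(\cY^{(k)}_\eps)$ is a free module over $\rH^\bullet_{\GL_k}(\pt)$ and the fibre inclusion is split. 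Granting injectivity, the module axioms for $\cdot$ on $\overline\cH_Q^{(k)}$ follow formally from those for the action \Cref{eq:noquotaction} on $\cH_{Q^\TU}^{(k)}$, which is the cited \cite[Proposition 4.1.1]{S16}: if $a\cdot(b\cdot c)$ and $(a\cdot b)\cdot c$ have equal images under an injective map, they are equal.

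Alternatively — and this is the fallback if injectivity is awkward to pin down — one repeats Schiffmann's associativity proof verbatim at the level of the $\GL_k$-quotient stacks. The key geometric input is that for a triple $(\eps,\del,\del')$ the two natural $\GL_k$-equivariant iterated extension spaces fit into a commutative diagram whose relevant square is cartesian with flat/proper legs, so that base change $p^*q_* = q'_* (p')^*$ applies; all these spaces are $\GL_k$-equivariant and taking the quotient stack is exact here, so the same diagram chase that proves associativity for \Cref{eq:noquotaction} proves it for $\cdot$. The main obstacle I anticipate is purely bookkeeping: making sure the parabolic $P_{\eps,(\del,0)}$ and the factor $\GL_k$ genuinely commute as subgroups of $\GL_{\eps+(\del,0)}$ acting on $E_{\eps,\del}^{(k)}$ and that the framing datum $f$ is fixed compatibly — i.e. that the block decomposition defining the extension interacts correctly with the $\star$-vertex block on which $\GL_k$ acts through the framing. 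Once that is checked, everything else is a transcription of the proof of the cited proposition.
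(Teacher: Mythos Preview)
Your fallback argument is correct and is precisely the paper's own proof: one observes that every diagram in the Schiffmann/Kontsevich--Soibelman associativity argument is $\GL_k$-equivariant, so the argument descends verbatim to the quotient stacks. The second sentence of the proposition then follows from the commutative diagram \Cref{eq:framedredpq}, as you say.

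However, your \emph{primary} route has a genuine gap: the map $\overline\cH_Q^{(k)} \to \cH_{Q^\TU}^{(k)}$ is \emph{not} injective, so you cannot deduce associativity on $\overline\cH_Q^{(k)}$ from associativity on $\cH_{Q^\TU}^{(k)}$ by an embedding argument. The point is that $\GL_k$ acts \emph{freely} on $Y^{(k)}_\eps$ (the framing $f$ has full rank $k$), so the map $\cY^{(k)}_\eps \to \overline\cY^{(k)}_\eps$ is a genuine principal $\GL_k$-bundle, and the pullback on cohomology is the edge map of a Serre spectral sequence whose fibre $\GL_k$ has odd-degree cohomology. Concretely, identifying $Y^{(k)}_\eps/\GL_k \cong R_\eps(Q^\TU)\times \Gr_{\eps_\star,k}$ and using transitivity of the $\GL_{\eps_\star}$-action, one computes
\[
  \rH^\bullet(\overline\cY^{(k)}_\eps) \cong \rH^\bullet_{\GL_{\eps'}\times\GL_k\times\GL_{\eps_\star-k}}(\pt),
  \qquad
  \rH^\bullet(\cY^{(k)}_\eps) \cong \rH^\bullet_{\GL_{\eps'}\times\GL_{\eps_\star-k}}(\pt),
\]
and the comparison map kills the Chern classes of the $\GL_k$-factor. (Even more concretely: take $\eps_\star = k = 1$; then $\overline\cY^{(1)}_\eps$ is a $B\bbG_m$-gerbe over $\cY^{(1)}_\eps$, and the pullback $\bbQ[c_1]\otimes(\cdots) \to (\cdots)$ sends $c_1\mapsto 0$.) Your appeal to Leray--Hirsch/even-degree degeneration does not apply here because the fibre is $\GL_k$, not a space with even cohomology; the statement ``$\rH^\bullet(\cY^{(k)}_\eps)$ is free over $\rH^\bullet_{\GL_k}(\pt)$'' is also false for a free action, since then equivariant cohomology reduces to ordinary cohomology of the quotient and carries no nontrivial $\rH^\bullet_{\GL_k}(\pt)$-module structure. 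So you must run the associativity diagram chase directly at the quotient level, as in your fallback.
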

\begin{proof}
  Associativity follows via a similar argument as in \cite{S16} and \cite[\S2.3]{KS11}; all relevant diagrams used to prove associativity are $\GL_k$-equivariant.
  The commutative diagram \Cref{eq:framedredpq} shows that the restriction $\overline\cH_Q^{(k)} \to \cH_Q^{(k)}$ commutes with the action.
\end{proof}

As a corollary, we find a $\cH_Q$-module structure on $\cH_{Q^\LL} \cong \overline\cH_{Q^\TT} \cong \overline \cH_{Q^\TU}$.
We claim that it is compatible with the differential.

\begin{thm}\label{thm:DGmodule}
  The maps $\d_k\colon \overline\cH_{Q^\TU}^{(k+1)} \to \overline\cH_{Q^\TU}^{(k)}$ are $\cH_Q$-linear.
  Hence the action of $\cH_Q$ makes the chain complex
  \[
    (\cH_{Q^\LL},\d) \cong (\overline\cH_{Q^\TT},\d) \cong (\overline \cH_{Q^\TU},\d)
  \]
  into a differentially graded $\cH_Q$-module.
\end{thm}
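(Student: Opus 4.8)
The plan is to exhibit the $\cH_Q$-action and the differential $\d_k$ as operations along correspondences that modify \emph{disjoint parts} of the framed moduli of $Q^\TU$, and then to deduce that they commute by the usual base change, projection formula and Künneth arguments. Fix $\eps\in\N Q_0^\TU$, $\del\in\N Q_0$, and $k\in\N$. First I would analyse the correspondence \eqref{eq:framedmodulecor} defining the $\cH_Q$-action (restricted from the $Q^\TU$-module structure of \cite{S16,FR18} along $R_{(\del,0)}(Q^\TU)=R_\del(Q)$). The key point is that a point in the fibre of $E^{(k)}_{\eps,\del}$ over $((\rho,f),\rho')$ is a framed extension $(\tau,f)$ with the \emph{same} framing datum $f\colon\C^{\eps_\star}\to\C^k$, with $\tau=\rho$ on all arrows of $Q_1^\TU\setminus Q_1$ (in particular on every arrow incident to $\star$), and with the $\star$-component of the dimension vector unchanged, since $(\eps+(\del,0))_\star=\eps_\star$. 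Consequently, under the identifications $Y^{(k)}_\eps/\GL_k\cong\Gr_{\eps_\star,k+1}\times R_\eps(Q^\TU)$ from \Cref{sec:grass}, the $\GL_k$-reduced correspondence is of the form $\id_{\Gr_{\eps_\star,k+1}}\times(\text{CoHA correspondence for }Q^\TU\text{ in the }R\text{-direction})$ --- that is, the $\cH_Q$-action is the identity on the Grassmannian factor --- and this holds equivariantly for $\GL_{\eps'}\times T_{\eps_\star}$ and for the Weyl group $W_{\eps_\star}$.

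Next I would observe that $\d_k$ is, dually, the identity on the $R_\eps(Q^\TU)$-factor: by construction it is induced, after passing to $W_{\eps_\star}$-invariants (legitimate by \Cref{lem:Wcommutes}), by the operator $(i\times\id)_*(\iota\times\id)^*$ attached to the $T_{\eps_\star}$-invariant decomposition $\Gr_{\eps_\star,k+1}=U_{\eps_\star,k+1}\sqcup Z_{\eps_\star,k}$, in which $i$, $\iota$ and the open restriction involve only the Grassmannian factor; by \Cref{lem:equivZU} the map $\iota^*$ is even an isomorphism, so $\d_k$ factors as an open restriction, an isomorphism, and a closed-immersion pushforward, each touching only the $\Gr$-factor. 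Thus $\d_k$ is built from operations of the shape $(-)\times\id_{R_\eps(Q^\TU)}$, while the $\cH_Q$-action is a flat pullback followed by a closed-immersion pushforward along maps of the shape $\id_{\Gr_{\eps_\star,k+1}}\times(-)$. Operations of these two kinds commute on a product: the relevant squares are Cartesian in the appropriate factor, so base change, the projection formula and Künneth give, for all $\eps,\del,k$, a commuting square
\[
  \begin{tikzcd}
    \overline\cH_{Q^\TU,\eps}^{(k+1)}\otimes\cH_{Q,\del}
      \ar[r,"\cdot"]\ar[d,"\d_k\otimes\id"']
    & \overline\cH_{Q^\TU,\eps+(\del,0)}^{(k+1)}\ar[d,"\d_k"]\\
    \overline\cH_{Q^\TU,\eps}^{(k)}\otimes\cH_{Q,\del}\ar[r,"\cdot"']
    & \overline\cH_{Q^\TU,\eps+(\del,0)}^{(k)}.
  \end{tikzcd}
\]
This is exactly the $\cH_Q$-linearity of $\d_k$; the cohomological shifts match automatically, since both $\d_k$ and the action $\cdot$ are defined with the stated degrees. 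Summing over $\eps\in\N Q_0^\TU$ and $\del\in\N Q_0$ and transporting along the isomorphisms of \Cref{cor:LTUisovs} then promotes $(\overline\cH_{Q^\TT},\d)$ and $(\cH_{Q^\LL},\d)$ to differentially graded $\cH_Q$-modules, quasi-isomorphic to the free module $\cH_Q$ by \Cref{thm:differentialconstruction}.

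The step I expect to require the most care is the bookkeeping of the two nested group reductions. The differential lives on $\GL_{\eps_\star}$-equivariant cohomology and is only defined after passing to $W_{\eps_\star}$-invariants of a $T_{\eps_\star}$-equivariant operator, whereas the $\cH_Q$-action on $\overline\cH_{Q^\TU}^{(k)}$ is the descent of the $Q^\TU$-module action \eqref{eq:noquotaction} along the $\GL_k$-quotient (\Cref{prop:modulesLTU}). One must check that every map occurring in the combined diagram ($p$, $q$, $i$, $\iota$, the open immersion $U_{\eps_\star,k+1}\hookrightarrow\Gr_{\eps_\star,k+1}$, and the decomposition $\Gr=U\sqcup Z$) is simultaneously equivariant for $\GL_{\eps'}\times T_{\eps_\star}\times\GL_k$ --- which holds because $U$ and $Z$ are cut out by a condition on $\C^{\eps_\star}$ that is visibly $\GL_k$-invariant, and because the extension correspondence fixes both the framing datum and the $\star$-block --- and that the resulting diagram is compatible with the exact functor $(-)^{W_{\eps_\star}}$. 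Granting this, the remainder is the standard CoHA-correspondence formalism of \cite{KS11,S16}, whose associativity and functoriality diagrams are all $\GL_k$- and $W_{\eps_\star}$-equivariant.
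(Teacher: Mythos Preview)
Your proposal is correct and follows essentially the same approach as the paper's own proof: both factor $\overline\cY_\eps^{(k)}$ as $\Gr_{\eps_\star,k}\times R_\eps(Q^\TU)$, observe that the CoHA correspondence $E_{\eps,(\del,0)}^{(k)}/\GL_k$ splits as $\Gr_{\eps_\star,k}\times E_{\eps,(\del,0)}$ (so the $\cH_Q$-action is the identity on the Grassmannian factor while $\d_k$ is the identity on the representation factor), and then invoke base change across the resulting Cartesian squares before passing to $W_{\eps_\star}$-invariants. The paper makes the base-change step explicit via a $3\times 3$ diagram and a chain of equalities $\d_k(\alpha\cdot\beta)=(i\times\id)_*(\iota\times\id)^*q_*p^*(\alpha\beta)=\ldots=(\d_k\alpha)\cdot\beta$, whereas you summarise the same manipulation verbally; your additional discussion of the simultaneous $\GL_k$- and $T_{\eps_\star}$-equivariance is a welcome elaboration of a point the paper leaves implicit.
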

\begin{proof}
  By construction, the maps $\d_k\colon \overline\cH_{Q^\TU}^{(k+1)} \to \overline\cH_{Q^\TU}^{(k)}$ are induced by the maps $(i\times\id)_*(\iota\times\id)^*$ on $G$-equivariant cohomology along the identification
  \[
    \rH^\bullet(\overline\cY_\eps^{(k)}) \cong \rH_G^\bullet(Y_\eps^{(k)}/\GL_k)^W \cong \rH_G^\bullet(\Gr_{\eps_\star,k} \times R_\eps(Q^\TU))^W.
  \]
  For any $\del\in \N Q_0$ we can identify $E_{\eps,(\del,0)}^{(k)}/\GL_k \cong \Gr_{\eps_\star,k} \times E_{\eps,(\del,0)}$ and there is a commutative diagram
  \[
    \begin{tikzcd}
      \Gr_{\eps_\star,k+1}\times R_\eps(Q^\TU)\times R_\del(Q)
      &
      \ar[l,"p",swap]     
      \Gr_{\eps_\star,k+1}\times E_{\eps,(\del,0)}
      \ar[r,"q"]
      &
      \Gr_{\eps_\star,k+1}\times R_{\eps+(\del,0)}(Q^\TU)
      \\
      \ar[u,"\iota\times\id\times\id",swap]
      Z_{\eps_\star,k} \times R_\eps(Q^\TU)\times R_\del(Q)
      \ar[d,"i\times \id\times\id"]
      &
      \ar[l,"p",swap]
      \ar[u,"\iota\times \id\times\id",swap]
      Z_{\eps_\star,k}\times E_{\eps,(\del,0)}
      \ar[d,"i\times \id\times\id"]
      \ar[r,"q"]
      &
      \ar[u,"\iota\times\id\times\id",swap]
      Z_{\eps_\star,k}\times R_{\eps+(\del,0)}(Q^\TU)
      \ar[d,"i\times \id\times\id"]
      \\
      \Gr_{\eps_\star,k}\times R_\eps(Q^\TU)\times R_\del(Q)
      &
      \ar[l,"p",swap]
      \Gr_{\eps_\star,k}\times E_{\eps,(\del,0)}
      \ar[r,"q"]
      &
      \Gr_{\eps_\star,k}\times R_{\eps+(\del,0)}(Q^\TU)
    \end{tikzcd}
  \]
  All squares are pullback squares, and hence we have an equality in $G$-equivariant cohomology
  \begin{align*}
    \d_k(\alpha\cdot \beta)
    &= (i\times\id\times\id)_*(\iota\times\id\times\id)^*(q_*p^*\alpha\beta)
    \\&= (i\times\id\times\id)_*q_*(\iota\times\id\times\id)^*p^*\alpha\beta
    \\&= q_*(i\times\id\times\id)_*p^*(\iota\times\id\times\id)^*\alpha\beta
    \\&= q_*p^*(i\times\id\times\id)_*(\iota\times\id\times\id)^*\alpha\beta
    \\&= q_*p^*((i\times\id)_*(\iota\times\id)^*\alpha)\beta
    = (\d_k\alpha)\cdot \beta.
  \end{align*}
  The same identity holds after restricting to $W$-invariants.
  Hence $\d_k$ is a morphism of $\cH_Q$-modules, and $\d = \sum_k \d_k$ is a differential on the $\cH_Q$-modules.
\end{proof}

\printbibliography%
\end{document}